\newcommand{\real}{{\mathbb{R}}}
\newcommand{\subscr}[2]{{#1}_{\textup{#2}}}
\newcommand{\map}[3]{#1: #2 \rightarrow #3}
\newcommand{\norm}[2]{\|#1\|_{#2}}
\newcommand{\until}[1]{\{1,\dots,#1\}}
\newcommand{\subj}{\text{subj. to}}
\newcommand{\minimize}{\text{minimize}}
\newcommand{\dom}[1]{\text{dom~#1}}
\newcommand{\cball}[2]{\overline{B}(#2,#1)}
\renewcommand{\cball}[2]{B(#2,#1)}
\renewcommand{\AA}{\mathcal{A}}
\newcommand{\CC}{\mathcal{C}}
\newcommand{\FF}{\mathcal{F}}
\newcommand{\GG}{\mathcal{G}}
\newcommand{\NN}{\mathcal{N}}
\newcommand{\PP}{\mathcal{P}}
\renewcommand{\SS}{\mathcal{S}}
\newcommand{\TT}{\mathcal{T}}
\newcommand{\UU}{\mathcal{U}}
\newcommand{\PVTOLo}{\text{PVTOL}_{0}}
\newcommand{\EpsVTOL}{\epsilon_{\mbox{\tiny\sc{PVTOL} } } }
\renewcommand{\EpsVTOL}{\epsilon_{\mbox{\tiny\sc{P} } } }
\newcommand{\EpsVTOLo}{\epsilon_{\mbox{\tiny\sc{P0} } } }
\newcommand{\avec}{{\bf{a}}}
\newcommand{\rr}{r}
\newcommand{\XU}{\mathcal{XU}}
\newcommand{\cXU}{\overline{\mathcal{XU}}}
\newcommand{\rhoFR}{\rho}
\newcommand{\EpsBeta}{\epsilon_{\mbox{\tiny c} } }
\newcommand{\DelBeta}{\delta_{\mbox{\tiny c} } }
\newcommand{\lift}{\texttt{LIFT}}
\newcommand{\ponewt}{\texttt{PO\_Newt}}
\newcommand{\Xinf}{X_\infty}
\newcommand{\Xtilde}{\widetilde{X}}
\newcommand{\yout}{{\bf y}}
\newtheorem{theorem}{Theorem}[section]
\newtheorem{proposition}[theorem]{Proposition}
\newtheorem{lemma}[theorem]{Lemma}
\newtheorem{assumption}[theorem]{Assumption}
\theoremstyle{definition}
\newtheorem{definition}[theorem]{Definition}
\newtheorem{remark}[theorem]{Remark}
\newenvironment{proof}{\textit{Proof: }}{\hfill \QED \\ \par}
\def\QEDclosed{\mbox{\rule[0pt]{1.3ex}{1.3ex}}} 
\def\QED{\QEDclosed} 
\newcommand\oprocendsymbol{\hbox{$\square$}}
\newcommand\oprocend{\relax\ifmmode\else\unskip\hfill\fi\oprocendsymbol}
\begin{document}

\begin{frontmatter}


  \title{Computing feasible trajectories for constrained\\ maneuvering systems:
    the PVTOL example\thanksref{footnoteinfo}} 

  \thanks[footnoteinfo]{This paper was not presented at any IFAC
    meeting. Preliminary versions of this work with partial results were
    presented as \cite{GN-JH-RF:05,GN-JH-RF:07}.  Corresponding
    author G. Notarstefano, giuseppe.notarstefano@unile.it. Tel. +390832297360
    Fax +39 0832 297733.}


  \author[lecce]{Giuseppe Notarstefano}
  \and \author[boulder]{John Hauser}
  \address[lecce]{Departement of Engineering, University of Lecce, Via per
    Monteroni, 73100 Lecce, Italy {\tt\small giuseppe.notarstefano@unile.it}}
  \address[boulder]{Department of Electrical and Computer Engineering,
    University of Colorado, Boulder, CO 80309-0425, USA {\tt\small
      hauser@colorado.edu}}


\begin{keyword}                                                           
  Nonlinear optimal control, constrained optimization,
  VTOL aircraft, nonlinear inversion.  
\end{keyword} 

\begin{abstract}          
  In this paper we provide an optimal control based strategy to explore
  \emph{feasible trajectories} of nonlinear systems, that is to find curves that
  satisfy the dynamics as well as point-wise state-input constraints.  The
  strategy is interesting itself in understanding the capabilities of the system
  in its operating region, and represents a preliminary tool to perform
  trajectory tracking in presence of constraints. The strategy relies on three
  main tools: dynamic embedding, constraints relaxation and novel optimization
  techniques, introduced in \cite{JH:02,JH-AS:06}, to find regularized solutions
  for \emph{point-wise} constrained optimal control problems.  The strategy is
  applied to the PVTOL, a simplified model of a real aircraft that captures the
  main features and challenges of several ``maneuvering systems''.
\end{abstract}

\end{frontmatter}

\section{Introduction}
%
In several fields as aerospace, robotics and automotive, designers have to deal
with complex nonlinear system dynamics. A deep knowledge of the behavior of such
systems is fundamental both in controlling them about a (possibly aggressive)
desired trajectory and in assisting the engineer in the design process.
An interesting problem is, therefore, the exploration of the trajectory manifold
of the system, that is, the characterization of the system (state-input)
trajectories and their parametrization with respect to `` performance-output
curves''. More formally, given a desired curve for some of the states (outputs),
we aim at finding a state-input \emph{lifted trajectory} (i.e. a state-input
curve satisfying the dynamics) whose outputs are close to the desired ones.
The solution of this problem is interesting itself in understanding the behavior
of the system and provides a nominal trajectory that can be used in a
receding horizon scheme for trajectory tracking.

Having in mind engineering applications, there is an important aspect to take
into account in the solution of exploration and tracking, that is the presence
of constraints in the system. Such constraints may arise from diverse causes
such as: physical bounds on the states and the inputs, validity bounds of the
model or presence (respectively absence) of important properties
(e.g. controllability). We will call the region where constraints are satisfied
\emph{feasibility region}.
This implies that, not only we look for lifted trajectories (curves satisfying the
dynamics), but furthermore we ask for \emph{feasible lifted trajectories}, that is
trajectories lying in the feasibility region.

In this paper we concentrate our attention on a special class of nonlinear
systems that we call \emph{maneuvering systems}. In this class we include those
systems for which a natural notion of performance outputs is present. Namely,
some of the states are requested to follow (almost exactly) a desired profile,
while the remaining states meet suitable (feasibility) bounds.
Maneuvering systems include for example vehicles (cars, motorcycles, aerial
vehicles, marine vehicles), manipulators and several other mechanical systems.
In this paper we consider as ``prototype example'' of maneuvering system the
PVTOL aircraft.
The PVTOL was introduced by Hauser et al. in \cite{JH-SSS-GM:92} in order to
capture the lateral non-minimum phase behavior of a Vertical Take Off and
Landing (VTOL) aircraft. This model has been widely studied in the literature
for its property of combining important features of nonlinear systems with
``tractable'' equations. Furthermore, the dynamics of many other mechanical
(maneuvering) systems can be rewritten in a similar fashion, e.g., the cart-pole
system, the pendubot \cite{MWS-DJB:95}, the bicycle model \cite{NHG-JEM:95},
\cite{JH-AS-RF:04} and the longitudinal dynamics of a real aircraft.
Since the PVTOL has been introduced in 1992, many researchers have studied this
system. 
A non exhaustive literature review of works on trajectory tracking or path
following of the PVTOL includes
\cite{JH-SSS-GM:92,PM-SD-BP:96,SAA-NHM:02,LM-AI-AS:02,LC-MT:07,LC-MM-CN-MT:10}.

The trajectory exploration problem has been investigated in the literature in
different formulations. Trajectory exploration for a class of VTOL aircrafts is
tackled in \cite{RN-LM:11}, where an optimization strategy is proposed to
compute optimal transition maneuvers. The problem of finding a (state-input)
trajectory whose output is exactly an assigned desired curve is known in the
literature as nonlinear inversion. This problem is particularly challenging for
nonminimum-phase systems. The problem was introduced and solved for some classes
of nonlinear systems and desired output curves in \cite{SD-DC-BP:96} and
extended to time-varying and non-hyperbolic systems respectively in
\cite{SD-BP:98} and \cite{SD:99}. More recently, \cite{AP-KYP:08}, a new
approach based on the notion of convergent systems was proposed to solve the
problem. In \cite{JH-AS-RF:05} the nonlinear inversion problem was solved for an
inverted pendulum by use of exponential dichotomy under mild conditions on the
output curve. Nonlinear inversion is strongly related to the output regulation
problem, that is to the design of a control law such that the system output
asymptotically tracks a desired output curve. An early reference is
\cite{AI-CIB:90}. There, the nonlinear inversion problem was solved by means of
a suitable partial differential equation when the desired output curve is the
trajectory of an exosystem. In \cite{LRH-GM:97} a two-step strategy was proposed
to solve the stable inversion problem.
An overview on the topic can be found in \cite{CIB-AI:00}, whereas more recent
references include \cite{AI-LM-AS:03,JH:04,AP-NVW-HN:07}.



%
%

The contribution of the paper is threefold. First, we propose an optimal control
based strategy to compute feasible trajectories of maneuvering systems. The
strategy relies on three main ideas: dynamic embedding, constraints relaxation
and continuation with respect to (the embedding and relaxation) parameters.
In detail, we compute feasible trajectories (i.e., trajectories satisfying
pointwise state and input constraints) that minimize a weighted $L_2$ distance
from the desired output curve.  In order to compute an approximate solution to
this problem we perform the following steps.
We embed the system into a family of systems and relax the feasibility region so
that the constraints are not active. For each value of the system (embedding)
and constraint (relaxation) parameters, we compute an unconstrained lifted
(state-input) trajectory (by applying an optimal control based dynamic embedding
technique) and use it as desired curve for a constrained $L_2$ distance
minimization.
To compute a feasible trajectory ($L_2$ close to the unconstrained one), we
design a relaxed version of the constrained optimal control problem. The
relaxation is based on the introduction of a parametrized barrier functional to
handle the constraints \cite{JH-AS:06}. The resulting optimal control problem is
solved by means of a projection operator based Newton method \cite{JH:02}.
%
%
The final ingredient of the strategy is a continuation procedure to update the
embedding and relaxation parameters up to their nominal values.

Second, we prove the effectiveness of the strategy, namely that a feasible
lifted trajectory can be computed, for suitable values of the embedding and
relaxation parameters.
The proof of this result relies on the continuity and differentiability of an
optimal control minimizer with respect to parameters, which is provided as a
stand alone result.
An analogous result was already proven in \cite{HM-HJP:94} for unconstrained
systems and extended to input-state constrained systems in \cite{HM-HJP:95}. The
main differences with the existing results are the following. We do not consider
input and state constraints directly, but take them into account in a relaxed
version of the constrained optimal control problem by use of a barrier
functional, the barrier functional being weighted by one of the varying
parameters. Furthermore, we take into account the system dynamics by means of a
trajectory tracking projection operator. The projection operator is the key
distinctive feature for the proof of the differentiability result. Indeed, the
projection operator allows to convert the dynamically constrained optimal
control problem into an unconstrained trajectory optimization problem. Thus, an
appropriate implicit function theorem can be used to \emph{solve} the first
order necessary condition equation. The implicit function theorem allows to show
that, if the second derivative of the cost composed with the projection operator
is invertible at the nominal parameter, then there is a neighborhood on which
the local minimizer exists and is $\CC^1$ with respect to the parameter. It
turns out that the appropriate condition to ensure invertibility of the operator
is that the minimizer satisfies the second order sufficiency condition at the
nominal parameter.

Third and final, we provide a complete characterization of the exploration
strategy for the PVTOL. In detail, we first solve the nonlinear inversion problem for
the unconstrained PVTOL. Given any $\CC^4$ desired output curve resulting into a
bounded acceleration profile, we prove that a trajectory can be computed for the
decoupled system and for suitable positive values of the coupling parameter. 
Based on this result and on the second set of contributions, we show that all
the strategy steps can be performed for suitable values of the feasibility
region and the coupling parameter. Finally, we perform a numerical
analysis showing that, in fact, feasible trajectories of the PVTOL can be
computed for aggressive desired output curves even in presence of relatively
tight constraints.

The paper is organized as follows. In Section~\ref{sec:PVTOL-model} the notion
of maneuvering systems is introduced and the PVTOL aircraft is presented as a
prototype example. Section~\ref{sec:devel-perf-tasks} defines the performance
tasks solved in the paper, namely unconstrained and constrained trajectory
lifting. In Section~\ref{sec:unconstr_lift} the unconstrained lifting task is
solved for the PVTOL aircraft. That is, a lifted trajectory is proven to exists
for some positive values of the coupling parameter. In
Section~\ref{sec:explor_strategy} the proposed optimal control based exploration
strategy is presented and in Section~\ref{sec:strategy-analysis} a theoretical
analysis of the strategy is developed, proving that for suitable values of the
system and constraint parameters a feasible trajectory exists. In
Section~\ref{sec:numerical_comp} numerical computations are provided showing the
effectiveness of the strategy for the constrained PVTOL on an aggressive barrel
roll trajectory in presence of respectively input and state-input
constraints. Finally, in Appendix~\ref{sec:prelim_proj_oper} an overview of the
trajectory tracking projection operator theory and the projection operator based
Newton method for unconstrained and constrained optimal control problems is
given.

\paragraph*{Notation}
For a function $\map{g}{[0,T]}{\real^p}$, $T>0$, we let
$||g(\cdot)||_{L_\infty}=\sup_{t\in[0,T]} ||g(t)||$ be the usual $L_\infty$
norm.
Let $\CC^k[0,T]^p$, $L_\infty[0,T]^p$ and $L_2[0,T]^p$ be the spaces of
functions $\map{g}{[0,T]}{\real^p}$ that are respectively $k$ times
differentiable with continuous $k$-th derivative, bounded and Lebesgue
integrable, and square integrable on $[0,T]$. In the rest of the paper we will
abuse notation and denote them as $\CC^k$, $L_\infty$ and $L_2$ when domain and
codomain are clear. Let $\xi \mapsto A(\xi)$ be a twice Fr\'echet differentiable
operator, we denote respectively $\zeta \mapsto D A(\xi_0) \cdot \zeta$ and
$(\zeta,\eta) \mapsto D^2 A(\xi_0) \cdot (\zeta,\eta)$ the first and second
Fr\'echet differentials of $A$ at $\xi_0$.
Given a control system $\dot x = f(x, u)$, where $x \in \real^n$ is the state
and $u \in \real^m$ is the control input, we say that a \emph{bounded} curve
$\eta = (\bar{x}(\cdot), \bar{u}(\cdot))$ is a (state-control) \emph{trajectory}
of the system if $\dot{\bar{x}}(t) = f(\bar{x}(t), \bar{u}(t))$ for all $t\in
[0,T]$, $0 < T \leq +\infty$, and $x(0) = \bar{x}(0)$.
%
%
Trajectories of the system through $x_0$ belong to the affine subspace $\Xtilde
:= (x_0, 0) + \Xinf$, where $\Xinf$ is the closed subspace of
$L^{n+m}_{\infty}[0, T]$ of curves $\zeta = (\beta, \nu)$ with continuous
$\beta$, $\beta(0) = 0$, and bounded $\nu$.
We denote $\TT\subset \Xtilde$ the set of bounded (in $L_\infty$) trajectories
through $x_0$.



\section{The PVTOL model and maneuvering system definition}
\label{sec:PVTOL-model}%
In this section we introduce the system that motivates our work, the PVTOL, and
inspired by this model we introduce the notion of maneuvering systems.
The PVTOL aircraft was introduced in \cite{JH-SSS-GM:92}. Using
standard aeronautic conventions the equations of motion are given by
\begin{equation}
  \begin{array}{cll}
    \ddot y & = & \phantom{-}u_1 \sin \varphi - \EpsVTOL u_2 \cos \varphi\\
    \ddot z & = &           -u_1 \cos \varphi - \EpsVTOL u_2 \sin \varphi + g\\
    \ddot \varphi & = & \phantom{-}u_2.
  \end{array}
  \label{eq:PVTOL}
\end{equation}
The aircraft state is given by the position $(y, z)$ of the center of gravity,
the roll angle $\varphi$ and the respective velocities $\dot y$, $\dot z$ and
$\dot \varphi$. The control inputs $u_1$ and $u_2$ are respectively the vertical
thrust force and the rolling moment. The gravity acceleration is denoted by $g$.
An interesting feature of the PVTOL model is that the rolling moment $u_2$
generates also a lateral force $\EpsVTOL u_2$, where $\EpsVTOL \in \real$ is a
coupling coefficient. 
%
%
In Figure~\ref{figure:PVTOL} the PVTOL aircraft with the reference system and
the inputs is shown.
\begin{figure}[thpb]
  \centering
  \includegraphics[width = 0.22\linewidth]{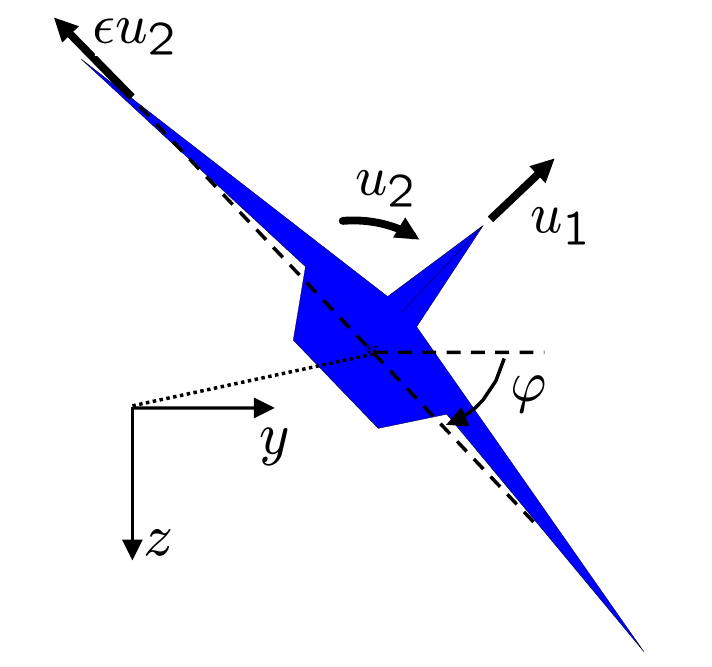}
  \caption{PVTOL aircraft.}
  \label{figure:PVTOL}
\end{figure}
Depending on the value of $\EpsVTOL$ the PVTOL shows very diverse dynamic
behaviors and possesses different control properties. We will clarify them in
the next sections.

Next, we exploit an important feature of the PVTOL that can be generalized to a
wide class of systems that we will refer to as \emph{maneuvering systems}. In
analyzing the PVTOL, we can partition the state space into the cartesian
product of: i) \emph{external position states} ($y$ and $z$), ii) \emph{internal
  position states} ($\varphi$) and iii) \emph{velocities} ($\dot{y}$, $\dot{z}$
and $\dot\varphi$).
In many engineering applications the objective is to track a time parametrized
curve described by the external position states while maintaining the internal
position states bounded (the trajectories of the velocities being consistent
with the related positions).
Thus, a natural choice of outputs arises for these systems, namely the
external position states. We call these outputs \emph{performance outputs}
meaning that a task for the system may be defined by assigning a desired curve
for these states (desired performance outputs)\footnote{The performance outputs
  are different from measured outputs, i.e., states or functions of states that
  can be measured by a sensor. Measured outputs play an important role in
  control design, but will not be considered here.}.
From now on, since we will
be only dealing with performance outputs, we will refer to them as
outputs.
A state space model of a maneuvering system is given by
\[
\begin{split}
  \dot{x}(t) &= f(x(t),u(t)),\\
  \yout(t) &= p(x(t)), \qquad t\in[0,T], \,T>0,
\end{split}
\]
where $x$, $u$ and $\yout$ are respectively the state, the input and the
performance output, $\map{f}{\real^n\times\real^m}{\real^n}$ is assumed to be
$\CC^2$ in both arguments, and $p$ selects a subset of the states
(e.g. the external position states for the PVTOL).

For the PVTOL, a state space model can be obtained by posing $x =
(y,z,\varphi,\dot{y},\dot{z},\dot{\varphi})$ and $u = (u_1,u_2)$. Also, a
natural choice of performance outputs is given by the position of the center of
gravity $(y,z)$, so that $p(x) := (y,z) = (x_1,x_2)$.

\section{Development of performance tasks}
\label{sec:devel-perf-tasks}
In this section we identify important challenges that arise in studying the
PVTOL capabilities and that can be generalized to maneuvering systems. These
challenges will drive us in providing useful strategies to explore the dynamic
capabilities of maneuvering systems.

We start defining the first two tasks we are interested in. Informally, given a
time-parametrized desired output curve, we want to find a (state-control)
trajectory of the system, such that the output portion of the trajectory is
close to the desired output curve. We will call this task \emph{trajectory
  lifting}. We will also define an approximate version of such task, called
\emph{practical trajectory lifting}, suitable for computations. More formally we
can define the two tasks as follows.
Given an output curve $\alpha(\cdot) \in L_\infty[0,T]^{p}$, we denote
$\norm{\alpha(\cdot)}{L_2}$ a suitable weighted $L_2$ norm of $\alpha(\cdot)$,
that is, $\norm{\alpha(\cdot)}{L_2} = \int_0^T \alpha(t)^T W \alpha(t) dt +
\alpha(T)^T W_1 \alpha(T)$, with $W$ and $W_1$ positive definite
matrices.\footnote{If $W= I_n$ and $W_1 = 0$ this is the classical $L_2$ norm.}

\begin{definition}[Trajectory lifting task]
  Let $\subscr{\yout}{d}(t)$, $t\in[0, T]$, be a desired sufficiently
  smooth output curve. Find a bounded trajectory $(x^*(\cdot),
  u^*(\cdot))\in\TT$ such that
  \[
  \norm{p(x^*(\cdot)) - \subscr{\yout}{d}(\cdot)}{L_2}^2 \leq \norm{p(x(\cdot))
    - \subscr{\yout}{d}(\cdot)}{L_2}^2 \qquad \text{for all } (x(\cdot),
  u(\cdot))\in\TT
  \]
\end{definition}

\begin{definition}[Practical trajectory lifting task]
  Let $\subscr{\yout}{d}(t)$, $t\in[0, T]$, be a desired sufficiently
  smooth output curve.  For a given $\epsilon>0$, find a trajectory
  $(x_\epsilon^*(\cdot), u_\epsilon^*(\cdot))\in\TT$ such that
  \[
  \norm{p(x_\epsilon^*(\cdot)) - \subscr{\yout}{d}(\cdot)}{L_2}^2 \leq
  \norm{p(x(\cdot)) - \subscr{\yout}{d}(\cdot)}{L_2}^2 + \epsilon \qquad
  \text{for all } (x(\cdot), u(\cdot))\in\TT
  \]
\end{definition}

\begin{remark}[Trajectory lifting and dynamic inversion]
  Dynamic inversion,\cite{SD-DC-BP:96}, is a trajectory lifting problem in the
  case the time horizon is $(-\infty, +\infty)$. The objective is to find a
  (state-input) trajectory such that the output trajectory is exactly the
  desired one. Clearly, if a solution to dynamic inversion exists, it is also a
  solution for the trajectory lifting task on the time horizon $[0, T]$ with
  zero minimum cost. If such a trajectory exists we say that it \emph{exactly}
  solves the trajectory lifting task. In the next section we will show that for
  the PVTOL it is in fact possible to solve the dynamic inversion problem. This
  could be not the case for other maneuvering systems, but (practical)
  trajectory lifting could still be solved by using optimal control.
\end{remark}

\begin{remark}[Trajectory lifting for flat systems]
  In most cases the performance outputs are driven by the application and cannot
  be decided by the designer. Thus, even for systems that are feedback
  linearizable or differentially flat, the design of a lifted trajectory is an
  issue.
  Regarding the PVTOL, in \cite{JH-SSS-GM:92} it was shown that the decoupled
  PVTOL ($\EpsVTOL=0$) was feedback linearizable, hence differentially flat,
  relative to the natural outputs. In \cite{PM-SD-BP:96} it was shown that the
  coupled PVTOL is also differentially flat, but with respect to the flat
  outputs $\subscr{y}{f} = y + \EpsVTOL \sin\varphi$ and $\subscr{z}{f} = z +
  \EpsVTOL \cos\varphi$.  If the flat outputs were chosen as performance
  outputs, the problem of finding a trajectory of the system consistent with the
  outputs would be easily solved as for the decoupled model. However, physical
  considerations suggest that the natural outputs are better suited as
  performance outputs.
\end{remark}


In this paper we are interested in a more challenging task, namely a constrained
version of the lifting task. That is, we want to perform the lifting task while
enforcing point-wise constraints on control inputs and states. In other words,
given a desired output curve and a region of the state-input space, we want to
find a trajectory that lies entirely in the region and whose output portion is
close (according to a given cost function) to the desired curve.
%

More formally, we define a \emph{feasibility region} $\cXU\subset
\real^n\times\real^m$ as a a compact simply connected region of the state-input
space where the trajectories of the system must lie at every time.
Consistently, a \emph{feasible trajectory} for $\cXU$ is a trajectory of the
system, $(x(\cdot), u(\cdot))\in\TT$, such that $(x(t), u(t))\in\cXU$ for almost
all $t\in[0,T]$. 
In the rest of the paper we will focus on trajectories that belong to the
interior, $\XU$, of $\cXU$. Thus, we say that a trajectory of the system,
$(x(\cdot), u(\cdot))\in\TT$, is a \emph{strictly feasible trajectory} for
$\cXU$ if $(x(t), u(t))\in\XU$ for almost all $t\in[0,T]$.
%
We are now ready to define the constrained version of the lifting task. As for
the unconstrained problem we define an exact and a practical task.

\begin{definition}[Feasible trajectory lifting task]
  \label{def:feas_traj_lift}
  Let $\subscr{\yout}{d}(t)$, $t\in[0, T]$, be a desired sufficiently
  smooth output curve and $\cXU\subset\real^n\times\real^m$ a feasibility
  region. Find a feasible trajectory, $(x^*(\cdot), u^*(\cdot))\in\TT$ with
  $(x_\epsilon^*(t), u_\epsilon^*(t)) \in\cXU$ for almost all $t\in [0,T]$, such
  that
  \[
  \norm{p(x^*(\cdot)) - \subscr{\yout}{d}(\cdot)}{L_2}^2 \leq \norm{p(x(\cdot))
    - \subscr{\yout}{d}(\cdot)}{L_2}^2
  \]
  for all $(x(\cdot), u(\cdot))\in\TT$ with $(x(t), u(t)) \in\cXU$ for almost
  all $t\in [0,T]$.
\end{definition}

\begin{definition}[Practical feasible trajectory lifting task]
  \label{def:pract_feas_traj_lift}
  Let $\subscr{\yout}{d}(t)$, $t\in[0, T]$, be a desired sufficiently
  smooth output curve and $\cXU\subset\real^n\times\real^m$ a feasibility
  region. For a given $\epsilon>0$, find a feasible trajectory,
  $(x_\epsilon^*(\cdot), u_\epsilon^*(\cdot))\in\TT$ with $(x_\epsilon^*(t),
  u_\epsilon^*(t)) \in\cXU$ for almost all $t\in [0,T]$, such that
  \begin{equation}
    \norm{p(x_\epsilon^*(\cdot)) - \subscr{\yout}{d}(\cdot)}{L_2}^2 \leq
    \norm{p(x(\cdot)) - \subscr{\yout}{d}(\cdot)}{L_2}^2 + \epsilon
    \label{eq:feas_task_pract}
  \end{equation}
  for all $(x(\cdot), u(\cdot))\in\TT$ with $(x(t), u(t)) \in\cXU$ for almost
  all $t\in [0,T]$.
\end{definition}

Finding a global solution to the above problems is a hard task since we are
dealing with infinite dimensional optimization problems. Thus, our goal in this
paper is to find a feasible trajectory that satisfies locally
equation~\eqref{eq:feas_task_pract}.

\section{Trajectory lifting for the unconstrained PVTOL}
\label{sec:unconstr_lift}
In this section we solve the exact lifting task for the coupled PVTOL with
positive values of the parameter $\EpsVTOL$ and show that the lifted trajectory
depends continuously on the parameter.

\subsection{Trajectory lifting for the decoupled PVTOL model}
The exact lifting task can be easily solved for the decoupled PVTOL model, that
is for the model with $\EpsVTOL = 0$. Since we will often refer to this special
case, we use for it the ad hoc notation $\PVTOLo$.
In \cite{JH-SSS-GM:92} the $\PVTOLo$ was shown to be input-output linearizable
provided $u_1 \neq 0$. Here, we provide sufficient conditions to compute a
trajectory of the system given a $\CC^4$ desired output curve.
We rewrite the equation for the $\PVTOLo$ since this will play an important role
in the development of our strategy.
\begin{equation}
  \begin{array}{cll}
    \ddot y & = & \phantom{-}u_1 \sin \varphi\\
    \ddot z & = &           -u_1 \cos \varphi + g\\
    \ddot \varphi & = & \phantom{-}u_2.
  \end{array}
  \label{eq:PVTOL0}
\end{equation}
The following assumption will be used in the paper.
\begin{assumption}[Annulus assumption]
  Let $\subscr{\yout}{d}(\cdot)\in\CC^4$ be a desired output curve. Let
  $\subscr{\avec}{d}(t) := (\subscr{\ddot{y}}{d}(t),
  \subscr{g-\ddot{z}}{d}(t))^T$, assume that $\subscr{\avec}{d}(t)\neq0$ and
  $0<\subscr{a}{min}\leq \norm{\subscr{\avec}{d}}\leq \subscr{a}{min}$ for all
  $t$.
\end{assumption}

A graphical interpretation of the annulus assumption is depicted in
Figure~\ref{fig:acc_range}. We ask the vector $\subscr{\avec}{d}$ to lie in the
annulus of radiuses $\subscr{a}{min}$ and $\subscr{a}{max}$ centered at the
origin of the reference axes $\overrightarrow{\ddot{y}}$ and
$\overrightarrow{\ddot{z}-g}$.

\begin{figure}[h]
  \centering
  \includegraphics[width = 0.3\textwidth]{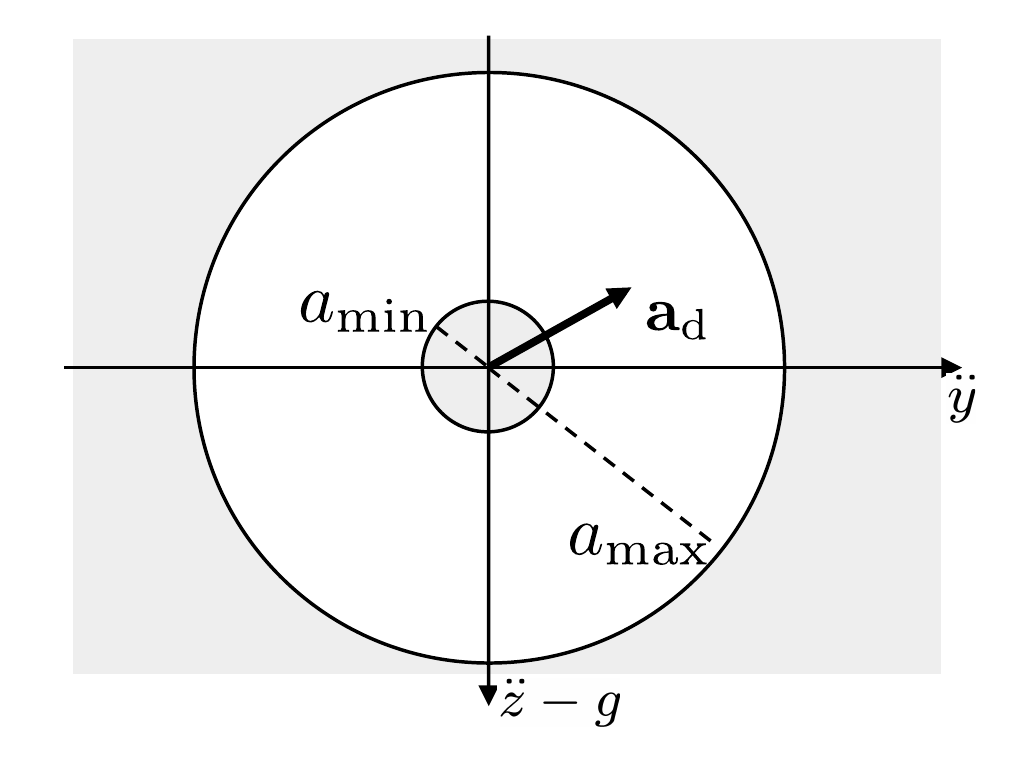}
  \caption{Acceleration range}
  \label{fig:acc_range}
\end{figure}

Under this assumption the (state-input) trajectory of the $\PVTOLo$ may be parametrized in
terms of the desired output curve by
\begin{equation}
  \begin{split}
    \varphi_0(t) &= \angle{\subscr{\avec}{d}(t)} = \text{atan2}\big(\ddot{y}_{\text{d}}(t), g-\ddot{z}_{\text{d}}(t))\big)\\
    {u_1}_0(t)  &= \norm{\subscr{\avec}{d}(t)}{} = \big( (g-\ddot{z}_{\text{d}}(t))^2 + \ddot{y}_{\text{d}}^2(t) \big)^\half\\
    {u_2}_0(t) &= \ddot{\varphi}_0(t).
  \end{split}
  \label{eq:lift_pvtol0}
\end{equation}

Equations in \eqref{eq:lift_pvtol0} allow to compute a trajectory of the
$\PVTOLo$ for a given output curve satisfying the annulus assumption, thus
exactly solving the trajectory lifting task (on any interval $[0,T]$). This is a
straightforward consequence of the input-output linearizability of $\PVTOLo$.

\subsection{Trajectory lifting for the coupled PVTOL via dichotomy}
Next, we prove that, given a desired output curve satisfying the annulus
assumption, it exists a trajectory of the PVTOL exactly solving the lifting task
for suitable positive values of the parameter $\EpsVTOL$ and, as $\EpsVTOL$ goes
to zero, this trajectory depends continuously on it. To prove the result we use
a feedback transformation that takes the system into the form of a \emph{driven
  pushed pendulum}
where the parameter $\EpsVTOL$ plays the role of
the pendulum length. The result that we prove is based on and extends results in
\cite{JH-AS-RF:05} on finding upright trajectories of an inverted pendulum.

Let us consider the feedback transformation given by
\begin{equation}
  \label{eq:u1u2inv}
  \left [ \begin{array}{c}
      u_1\\
      \EpsVTOL u_2
    \end{array} \right ]
  =
  \left [ \begin{array}{rl}
      \sin\varphi &~  -\cos \varphi\\[1.2ex]
      -\cos\varphi &~  -\sin \varphi
    \end{array} \right ]
  \left ( \left [ \begin{array}{r}
        0\\
        -g
      \end{array} \right ] +
    \left [ \begin{array}{c}
        v_1\\
        v_2
      \end{array} \right ] \right ).
\end{equation}
The dynamics of the system becomes
\begin{equation}
  \label{eq:PVTOLfbk}
  \begin{array}{cll}
    \phantom{\EpsVTOL}\ddot y       & = & v_1\\
    \phantom{\EpsVTOL}\ddot z       & = & v_2\\
    \EpsVTOL\ddot \varphi & = & (g-v_2)\sin\varphi -v_1\cos\varphi.
  \end{array}
\end{equation}

We have written the dynamics in a form that is somehow unusual, since the
parameter $\EpsVTOL$ appears in the left hand side of the differential
equation. This form has the advantage that it is well defined even for
$\EpsVTOL=0$. In this case the model in \eqref{eq:PVTOLfbk} is defined by an
algebraic differential equation. Also, to be consistent, we have to consider as
control inputs $u_1$ and $\EpsVTOL u_2$ and think of the roll dynamics in
\eqref{eq:PVTOL} as $\EpsVTOL \ddot\varphi = \EpsVTOL u_2$.

\begin{remark}
  The feedback transformation highlights an important property of the coupled
  PVTOL ($\EpsVTOL \neq 0$), i.e., it has a well defined relative degree, $r =
  [2, 2]$, with respect to the output $(y,z)$. It is worth noting that for
  $\EpsVTOL>0$ the zero dynamics of the system is unstable (the driven pendulum
  is pushed and thus inverted), and therefore the system is \emph{non-minimum
    phase}. This is an interesting feature of the PVTOL that makes the lifting
  task more challenging.
\end{remark}

An important role in the study of the trajectory manifold of the PVTOL is played
by the ``quasi trajectory'' that (with some abuse of notation) we call
\emph{quasi-static trajectory}. It is a time parametrized curve built pretending
that, at each instant $t$, the roll angle assumes the equilibrium value obtained
if $\ddot y (t)$ and $\ddot z (t)$ were constant. By imposing $\ddot{\varphi} =
0$ in equation \eqref{eq:PVTOLfbk} we get
\begin{equation}
  \tan \varphi_{qs}(t) = \frac{\ddot y (t)}{(g-\ddot z(t))}.
\end{equation}
It is worth noting that the quasi-static roll trajectory does not depend on
$\EpsVTOL$ and coincides with the roll trajectory that we obtained for the
$\PVTOLo$ system. Also, in the driven pushed pendulum the quasi-static
trajectory produces an acceleration vector $\subscr{\avec}{d}$ aligned along the
pendulum axis.


A first straightforward but interesting result can be proven. Before stating the
proposition we need some more notation. Recall that in
Section~\ref{sec:PVTOL-model} we have written the PVTOL dynamics in state space
form and denoted $x$ and $u$ the state and the input of the system. Consistently
with that notation we let $x$ be the state of system in \eqref{eq:PVTOLfbk} and
$v := (v_1, v_2)$, so that $\dot x = \subscr{f}{pend}(x,v)$ with suitably
defined $\subscr{f}{pend}$.

\begin{proposition}
  Let $\subscr{\yout}{d}(\cdot)$ be a desired $\CC^4$ output curve on
  $(-\infty,+\infty)$ satisfying the annulus assumption. Let
  $(\subscr{x}{}^*(\cdot),\subscr{u}{}^*(\cdot))$ be the trajectory solving the
  exact lifting for the $\PVTOLo$ model in \eqref{eq:PVTOL0} according to
  \eqref{eq:lift_pvtol0} and
  $(\subscr{x}{pend}^*(\cdot),\subscr{v}{pend}^*(\cdot))$ the trajectory solving
  the exact lifting for the model in \eqref{eq:PVTOLfbk} with $\EpsVTOL=0$. Then
  \[
  \subscr{x}{}^*(\cdot) = \subscr{x}{pend}^*(\cdot)
  \]
  and
  \[
  u_1^*(\cdot) = (g-\subscr{\ddot{z}}{d}(\cdot)) \cos\varphi^*(\cdot) +
  \subscr{\ddot{y}}{d}(\cdot)\sin\varphi^*(\cdot),
  \]
  consistently with equation \eqref{eq:u1u2inv}.
\end{proposition}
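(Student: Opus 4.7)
My plan is to prove the proposition by directly comparing the algebraic relations defining each lifted trajectory and invoking the feedback transformation \eqref{eq:u1u2inv}. Because both systems are required to reproduce the desired output exactly on $(-\infty,+\infty)$, we must have $y(t)=\subscr{y}{d}(t)$ and $z(t)=\subscr{z}{d}(t)$ for all $t$, so in particular $\ddot y(t)=\subscr{\ddot y}{d}(t)$ and $\ddot z(t)=\subscr{\ddot z}{d}(t)$. The strategy is then to exploit these equalities to identify $\varphi^*(\cdot)$ in both formulations and finally to read off $u_1^*$ from the inverse of the feedback transformation.

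First I would consider the model \eqref{eq:PVTOLfbk} with $\EpsVTOL=0$. The first two equations give $v_1^*(t)=\subscr{\ddot y}{d}(t)$ and $v_2^*(t)=\subscr{\ddot z}{d}(t)$. The third equation degenerates to the algebraic relation $(g-\subscr{\ddot z}{d}(t))\sin\varphi(t)-\subscr{\ddot y}{d}(t)\cos\varphi(t)=0$, which is exactly the statement that $\subscr{\avec}{d}(t)$ is aligned with the pendulum axis. Under the annulus assumption $\subscr{\avec}{d}(t)\neq 0$ with bounded magnitude, this equation has a unique continuous solution (selected by the $\mathrm{atan2}$ branch and by continuity over the whole real line) given by $\varphi_{\text{pend}}^*(t)=\mathrm{atan2}(\subscr{\ddot y}{d}(t),\,g-\subscr{\ddot z}{d}(t))$, which coincides with $\varphi_0(t)$ in \eqref{eq:lift_pvtol0}. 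Since $\subscr{\yout}{d}\in\CC^4$, this $\varphi_{\text{pend}}^*$ is $\CC^2$, and the velocity components of $\subscr{x}{pend}^*$ are determined by differentiation of $(\subscr{y}{d},\subscr{z}{d},\varphi_{\text{pend}}^*)$. The same identifications produce the $\PVTOLo$ lifted state, so $\subscr{x}{}^*(\cdot)=\subscr{x}{pend}^*(\cdot)$.

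Next I would obtain the formula for $u_1^*$ by using the first row of the feedback transformation \eqref{eq:u1u2inv}. Evaluating $u_1=\sin\varphi\,v_1-\cos\varphi(-g+v_2)$ at $\varphi=\varphi^*$, $v_1=\subscr{\ddot y}{d}$, $v_2=\subscr{\ddot z}{d}$ yields exactly $u_1^*=(g-\subscr{\ddot z}{d})\cos\varphi^*+\subscr{\ddot y}{d}\sin\varphi^*$, which is also consistent with the $\PVTOLo$ formula $u_{1,0}=\|\subscr{\avec}{d}\|$ via the trigonometric identity obtained by substituting the $\mathrm{atan2}$ expression for $\varphi^*$.

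The only delicate point, and therefore the main obstacle, is the uniqueness and regularity of $\varphi_{\text{pend}}^*$ as the solution of the pointwise algebraic equation. The annulus assumption is precisely what is needed: it rules out $\subscr{\avec}{d}(t)=0$ (where $\varphi$ would be undetermined) and provides a uniform bound that lets us select the correct branch of $\mathrm{atan2}$ continuously along the entire trajectory. Once this branch selection is justified, the equality of the two lifted trajectories and the form of $u_1^*$ follow by direct substitution, so the rest of the argument is a routine verification.
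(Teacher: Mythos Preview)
Your proof is correct. The paper does not actually provide a proof of this proposition: it introduces it as ``a first straightforward but interesting result'' and states it without any accompanying \texttt{proof} environment, moving directly on to the formulation of the exact lifting task. Your argument is precisely the direct verification the authors had in mind: identify $v_1^*=\subscr{\ddot y}{d}$, $v_2^*=\subscr{\ddot z}{d}$ from the first two equations of \eqref{eq:PVTOLfbk}, observe that for $\EpsVTOL=0$ the third equation becomes the algebraic relation $(g-\subscr{\ddot z}{d})\sin\varphi-\subscr{\ddot y}{d}\cos\varphi=0$ whose continuous solution under the annulus assumption is $\varphi_0$ from \eqref{eq:lift_pvtol0}, and then read off $u_1^*$ from the first row of \eqref{eq:u1u2inv}. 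The only substantive observation---that the annulus assumption guarantees $\subscr{\avec}{d}(t)\neq 0$ and hence a well-defined continuous $\mathrm{atan2}$ branch---is exactly the point the paper makes in the surrounding discussion (and in noting that the quasi-static roll angle $\varphi_{qs}$ coincides with $\varphi_0$).
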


%
%


With this feedback transformation in hand, the exact lifting task for the PVTOL
can be formulated as follows. Given a desired output curve
$\subscr{\yout}{d}(\cdot)$, find a bounded roll trajectory for the roll dynamics
\begin{equation}
  \label{eq:Rolldyn}
  \begin{array}{cll}
    \EpsVTOL\ddot \varphi & = & (g-\subscr{\ddot{z}}{d}(t))\sin\varphi
    -\subscr{\ddot{y}}{d}(t)\cos\varphi.
  \end{array}
\end{equation}

The proof of existence of $\varphi_{\EpsVTOL}(\cdot)$ and right continuity with
respect to $\EpsVTOL$ is based on the presence of a dichotomy in the
linearization of the dynamics of an inverted pendulum about the vertical
position. In~\cite{JH-AS-RF:05} a bounded trajectory of the inverted pendulum is
proven to exist as a fixed point of a contraction mapping. Here, we generalize
the result in \cite{JH-AS-RF:05} in the sense that we allow the acceleration of
the pivot point to lie in the entire annulus (not only on the horizontal axis
$\ddot{z}=0$) and we study the properties of the lifted trajectory when the
parameter $\EpsVTOL$ (the length of the pendulum) goes to
zero.

We rewrite the roll dynamics in \eqref{eq:Rolldyn} in the form
\begin{equation}
  \begin{array}{cll}
    \EpsVTOL \ddot{\varphi} & = & \subscr{a}{d}(t) \sin (\varphi - \varphi_{qs}(t)),
  \end{array}
  \label{eq:Rolldyn_qs}
\end{equation}
where $\subscr{a}{d}(t) = \norm{\subscr{\avec}{d}(t)}{}$.
%
%
Then we write it in terms of the error from the \emph{quasi-static} angle,
$\theta = \varphi - \subscr{\varphi}{qs}$, as
\begin{equation}
  \begin{array}{cll}
    \EpsVTOL\ddot{\theta} & = & \subscr{a}{d}(t) \sin\theta - \EpsVTOL\ddot{\varphi}_{\text{qs}}(t),
  \end{array}
  \label{eq:Thetadyn}
\end{equation}
and, in order to use dichotomy, in terms of its linearization about $\theta =
0$,
\begin{equation}
  \begin{array}{cll}
    \EpsVTOL\ddot{\theta} & = & \subscr{a}{d}(t) \theta - \subscr{a}{d}(t) \Big(\theta - \sin \theta + \EpsVTOL\frac{\subscr{\ddot{\varphi}}{qs}(t)}{\subscr{a}{d}(t)}\Big).
  \end{array}
  \label{eq:Thetadyn_dicho}
\end{equation}
Let us consider the linear time-varying system driven by a bounded external
input
\begin{equation*}
  \begin{array}{cll}
    \EpsVTOL \ddot \gamma & = & \subscr{a}{d}(t)\gamma - \subscr{a}{d}(t)\mu(t).
  \end{array}
  \label{eq:Thetadyn_lin}
\end{equation*}
In \cite{JH-AS-RF:05} it was proven that, for any $\EpsVTOL>0$, the undriven
system admits an exponential dichotomy and, therefore that, working in a
noncausal fashion, for any bounded input $\mu(\cdot)$ a bounded solution
$\gamma(\cdot)$ exists. We let $\map{\AA_{\EpsVTOL}}{L_\infty}{L_\infty}$ be the
linear map $\mu(\cdot) \mapsto \gamma(\cdot)$. The following holds.
\begin{lemma}[Theorem~5 in \cite{JH-AS-RF:05}]
  For any $\EpsVTOL>0$, $\AA_{\EpsVTOL}$ is a bounded linear operator with
  $\norm{\AA_{\EpsVTOL}}{}=1$.
\end{lemma}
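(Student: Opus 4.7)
The plan is to prove the two inequalities $\|\mathcal{A}_\epsilon\|\ge 1$ and $\|\mathcal{A}_\epsilon\|\le 1$ separately, using the constant function as the critical test input.

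For the lower bound, I would first observe that $\gamma\equiv 1$ and $\mu\equiv 1$ solve the governing ODE identically, since $\epsilon\cdot 0 = a_d(t)\cdot 1 - a_d(t)\cdot 1 = 0$. Because the exponential dichotomy for the homogeneous equation $\epsilon\ddot\gamma = a_d(t)\gamma$ (with $a_d(t)\ge a_{\min}>0$) guarantees that the bounded solution $\gamma(\cdot)$ associated to a given bounded forcing is unique, we must have $\mathcal{A}_\epsilon(\mathbf{1}) = \mathbf{1}$. This immediately gives $\|\mathcal{A}_\epsilon\|\ge 1$.

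For the upper bound I would exploit linearity together with a positivity (maximum-principle) property of $\mathcal{A}_\epsilon$. Writing $M=\|\mu(\cdot)\|_{L_\infty}$ and using linearity, $\mathcal{A}_\epsilon(M-\mu) = M - \mathcal{A}_\epsilon(\mu) = M - \gamma$ and $\mathcal{A}_\epsilon(M+\mu) = M+\gamma$. Since $M\pm\mu \ge 0$ pointwise, it is enough to show that $\mathcal{A}_\epsilon$ maps nonnegative inputs to nonnegative outputs. This is the key step. I would establish it either by writing the bounded solution via the dichotomy Green's function
\begin{equation*}
  \gamma(t) = \int_{-\infty}^{+\infty} G_\epsilon(t,s)\,a_d(s)\,\mu(s)\,ds,
\end{equation*}
and verifying that $G_\epsilon(t,s)\ge 0$ (the Green's function for $\epsilon\ddot{(\cdot)} - a_d(t)(\cdot)$ with bounded boundary conditions has a definite sign, from the sign of the potential $-a_d<0$), or equivalently by a comparison argument: if $\mu\ge 0$ and the bounded solution $\gamma$ were negative somewhere, then $\psi:=-\gamma$ would be a bounded function satisfying $\epsilon\ddot\psi \ge a_d(t)\psi + a_d(t)\mu \ge a_{\min}\psi$ on the region $\{\psi>0\}$, so $\psi$ would be convex wherever positive with $\ddot\psi\ge(a_{\min}/\epsilon)\psi$; comparing with a translated hyperbolic cosine of rate $\sqrt{a_{\min}/\epsilon}$ contradicts boundedness on $\mathbb{R}$. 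Either way, $\mu\ge 0$ forces $\gamma\ge 0$, so $M-\gamma\ge 0$ and $M+\gamma\ge 0$, yielding $|\gamma(t)|\le M$ for every $t$ and hence $\|\mathcal{A}_\epsilon\|\le 1$.

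The main obstacle is clearly the positivity of the operator, since the naive maximum-principle argument (evaluate the ODE at the supremum of $\gamma$) breaks down when the supremum is not attained on the unbounded real line. I expect the comparison-with-$\cosh$ subsolution argument to be the most transparent way to close this gap, as it uses precisely the exponential dichotomy structure already invoked to define $\mathcal{A}_\epsilon$. Combining the two inequalities then gives $\|\mathcal{A}_\epsilon\|=1$, as claimed.
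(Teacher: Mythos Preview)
The paper does not prove this lemma; it merely cites it as Theorem~5 of \cite{JH-AS-RF:05}, so there is no in-paper proof to compare against. Your proposal is therefore being assessed on its own merits.

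Your argument is sound. The lower bound via $\mathcal{A}_{\epsilon_{\textsc{p}}}(\mathbf{1})=\mathbf{1}$ is immediate and correct. For the upper bound, the reduction to positivity of $\mathcal{A}_{\epsilon_{\textsc{p}}}$ together with $\mathcal{A}_{\epsilon_{\textsc{p}}}(\mathbf{1})=\mathbf{1}$ is the standard ``positive operator fixing the unit'' argument, and it goes through as you describe. Of the two routes you sketch for positivity, the Green's function route is the more robust: disconjugacy of $\epsilon\ddot\gamma - a_d(t)\gamma = 0$ (since $a_d\ge a_{\min}>0$) lets you choose positive fundamental solutions $\gamma_\pm$ decaying at $\pm\infty$, the Wronskian has a definite sign, and the resulting kernel $G_\epsilon(t,s)$ has a definite sign, so $\mu\ge 0$ forces $\gamma\ge 0$. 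The comparison/$\cosh$ argument also works but, as you anticipate, needs a small amount of care because the supremum may not be attained on $\mathbb{R}$; the cleanest way to close it is to note that on any maximal interval where $\psi>0$ one has $\ddot\psi>0$, hence strict convexity, and a bounded strictly convex function cannot live on an unbounded interval while remaining positive with the required growth inequality $\ddot\psi\ge(a_{\min}/\epsilon)\psi$. Either route yields $\|\gamma\|_{L_\infty}\le\|\mu\|_{L_\infty}$ and hence $\|\mathcal{A}_{\epsilon_{\textsc{p}}}\|=1$.
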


Defining the nonlinear operator ${\NN_{\EpsVTOL}}$ as
\begin{equation*}
  \begin{array}{c}
    \theta \rightarrow \AA_{\EpsVTOL}\left [\theta - \sin \theta +
      \EpsVTOL\frac{\subscr{\ddot{\varphi}}{qs}(t)}{\subscr{a}{d}(t)} \right ]
    =: {\NN_{\EpsVTOL}}\left [\theta (\cdot) \right],
  \end{array}
\end{equation*}
Clearly, a bounded curve $\theta(\cdot)$ is a solution of
\eqref{eq:Thetadyn} if and only if it is a fixed point of ${\NN_{\EpsVTOL}}$,
i.e. $\theta(\cdot) = {\NN_{\EpsVTOL}}\left[\theta (\cdot) \right]$.

%
%
%

We are now ready to prove the main result in this section. The proof relies on
arguments of Theorem~8 in \cite{JH-AS-RF:05}.
\begin{theorem}
  \label{thm:phi_eps_continuity}
  Given a $\CC^4$ output curve $(y(\cdot),z(\cdot))$ on $(-\infty,+\infty)$,
  with $0<\subscr{a}{min}\leq \subscr{a}{d}(t)\leq\subscr{a}{max}$ for all $t$
  ($\ddot{\varphi}_{\text{qs}}(\cdot)$ bounded), then there exists an
  $\EpsVTOLo>0$,
  \[
  \EpsVTOLo =
  \frac{1}{\norm{\subscr{\ddot{\varphi}}{qs}(\cdot)/\subscr{a}{d}(\cdot)}{L_\infty}},
  \]
  such that for any $\EpsVTOL \in (0, \EpsVTOLo)$
  \begin{enumerate}
  \item there exists a bounded trajectory $\theta_{\EpsVTOL}(\cdot)$ of
    \eqref{eq:Thetadyn} so that $\varphi_{\EpsVTOL}(\cdot) = \varphi_{qs}(\cdot)
    + \theta_{\EpsVTOL}(\cdot)$ is a bounded trajectory of \eqref{eq:Rolldyn}
    whenever $\varphi_{qs}(\cdot)$ is bounded.  and
  \item $\norm{\varphi_{\EpsVTOL}(\cdot) - \varphi_{\text{qs}}(\cdot)}{L_\infty}\leq
    \sin^{-1}\frac{\EpsVTOL}{\EpsVTOLo}$.
  \end{enumerate}
\end{theorem}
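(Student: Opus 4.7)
The natural approach is to apply the Banach fixed point theorem to the operator $\NN_{\EpsVTOL}$ on a suitable closed ball in $L_\infty$, exploiting the fact that $\|\AA_{\EpsVTOL}\| = 1$ together with the superlinear behavior of $\theta - \sin\theta$ near zero. The bound $\EpsVTOLo = 1/\|\ddot\varphi_{qs}/\subscr{a}{d}\|_{L_\infty}$ suggests that for $\EpsVTOL \in (0,\EpsVTOLo)$ the ``forcing term'' $\EpsVTOL \, \ddot\varphi_{qs}/\subscr{a}{d}$ has $L_\infty$ norm strictly less than one, which is exactly the regime in which the residual nonlinearity $\theta - \sin\theta$ can absorb it.

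First, I would fix $\EpsVTOL \in (0,\EpsVTOLo)$ and set $r = \sin^{-1}(\EpsVTOL/\EpsVTOLo) \in [0,\pi/2)$, then work in the closed ball $B_r = \{\theta \in L_\infty : \|\theta\|_{L_\infty} \leq r\}$. The two key elementary inequalities I would establish are: for $|\theta| \leq r < \pi$,
\begin{equation*}
|\theta - \sin\theta| = |\theta| - \sin|\theta| \leq r - \sin r,
\end{equation*}
(using monotonicity of $x - \sin x$ on $[0,\pi]$), and for $\theta_1,\theta_2 \in B_r$,
\begin{equation*}
|(\theta_1 - \sin\theta_1) - (\theta_2 - \sin\theta_2)| \leq (1-\cos r)\,|\theta_1 - \theta_2|,
\end{equation*}
obtained from the mean value theorem applied to $x \mapsto x - \sin x$ (whose derivative is $1-\cos x$). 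Combining these with $\|\AA_{\EpsVTOL}\| = 1$ yields, for $\theta \in B_r$,
\begin{equation*}
\|\NN_{\EpsVTOL}[\theta]\|_{L_\infty} \leq (r - \sin r) + \EpsVTOL/\EpsVTOLo = r - \sin r + \sin r = r,
\end{equation*}
so $\NN_{\EpsVTOL}$ maps $B_r$ into itself, and it is a strict contraction there with constant $1-\cos r < 1$.

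Then the Banach fixed point theorem supplies a unique fixed point $\theta_{\EpsVTOL} \in B_r$ of $\NN_{\EpsVTOL}$, which is by construction a bounded solution of \eqref{eq:Thetadyn}; setting $\varphi_{\EpsVTOL}(\cdot) = \varphi_{qs}(\cdot) + \theta_{\EpsVTOL}(\cdot)$ produces the bounded roll trajectory of \eqref{eq:Rolldyn} required by (i), and the bound $\|\theta_{\EpsVTOL}\|_{L_\infty} \leq r = \sin^{-1}(\EpsVTOL/\EpsVTOLo)$ is exactly statement (ii). The main delicate point, rather than the contraction estimate itself, is the regularity bookkeeping around $\AA_{\EpsVTOL}$: I need to argue that the fixed point, a priori only in $L_\infty$, is genuinely $\CC^2$ so that \eqref{eq:Thetadyn} holds pointwise. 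This follows because $\AA_{\EpsVTOL}$ produces, by its construction in \cite{JH-AS-RF:05}, a $\CC^2$ output whenever the input is bounded (since the input on the right-hand side of the linear dichotomy equation is $L_\infty$, the solution and its first derivative are absolutely continuous and the second derivative is bounded), and both $\ddot\varphi_{qs}$ and $\subscr{a}{d}$ are continuous under the $\CC^4$/annulus hypothesis, so the bracketed argument of $\AA_{\EpsVTOL}$ in $\NN_{\EpsVTOL}$ inherits sufficient regularity to close the loop.
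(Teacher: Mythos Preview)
Your proposal is correct and follows essentially the same approach as the paper: both apply the contraction mapping theorem to $\NN_{\EpsVTOL}$ on a closed $L_\infty$ ball, using $\|\AA_{\EpsVTOL}\|=1$, the monotonicity of $x-\sin x$ on $[0,\pi]$ for invariance, and the Lipschitz constant $1-\cos r$ for contractivity, with the choice $r=\sin^{-1}(\EpsVTOL/\EpsVTOLo)$ yielding the bound in (ii). Your additional remark on bootstrapping $\CC^2$ regularity of the fixed point is a point the paper leaves implicit.
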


\begin{proof}
  In order to prove existence of the trajectory, we show that there exists
  $\delta_0>0$ such that for any $\delta\in (0,\delta_0)$ the map
  $\NN_{\EpsVTOL}$ is a contraction on the invariant set $\overline{B}_\delta =
  \{\theta(\cdot)\in L_\infty |~ \norm{\theta(\cdot)}{L_\infty}\leq\delta\}$. The set
  $\overline{B}_\delta$ is invariant if for any $\delta$
  \[
  \norm{\NN_{\EpsVTOL}[\theta(\cdot)]}{L_\infty}\leq
  \norm{\AA_{\EpsVTOL}}{}\norm{\theta(\cdot) -
    \sin\theta(\cdot)}{L_\infty}+\EpsVTOL\norm{\frac{\subscr{\ddot{\varphi}}{qs}(\cdot)}{\subscr{a}{d}(\cdot)}}{L_\infty}\leq\delta.
  \]
  Recall that $\norm{\AA_{\EpsVTOL}}{}=1$ for all $\EpsVTOL>0$ and $f(\delta) =
  \delta - \sin\delta$ is monotonically increasing on $[0,\pi]$. Therefore,
  posing
  $\EpsVTOLo=\frac{1}{\norm{\subscr{\ddot{\varphi}}{qs}(\cdot)/\subscr{a}{d}(\cdot)}{L_\infty}}$,
  $\overline{B}_\delta$, $\delta\in[0,\pi]$, is invariant under $\NN_{\EpsVTOL}$
  if
  \[
  \frac{\EpsVTOL}{\EpsVTOLo} \leq \sin\delta.
  \]
  Now, the function $f(\cdot)$ is Lipschitz continuous on
  $[0,\delta_0]\subset[0,\pi]$ with Lipschitz constant $1-\cos\delta_0$,
  i.e. $|f(\delta_1) - f(\delta_2)|\leq (1-\cos\delta_0) |\delta_1 - \delta_2|$
  for all $\delta_1$, $\delta_2$ $\in [0,\delta_0]\subset[0,\pi]$. Therefore,
  choosing $\delta<\pi/2$ and such that $\sin\delta \geq
  \frac{\EpsVTOL}{\EpsVTOLo}$, we have
  \[
  \norm{\NN_{\EpsVTOL}[\theta_1(\cdot)]-\NN_{\EpsVTOL}[\theta_2(\cdot)]}{L_\infty}\leq
  \rho \norm{\theta_1(\cdot) - \theta_2(\cdot)}{L_\infty},
  \]
  with $\rho = 1-\cos\delta < 1$, so that $\NN_{\EpsVTOL}$ is a contraction on
  the invariant set $\overline{B}_\delta$. In particular, the minimal set is
  obtained for $\delta = \sin^{-1}\frac{\EpsVTOL}{\EpsVTOLo}$. This gives the
  bound
  \[
  \norm{\theta(\cdot)}{L_\infty} = \norm{\varphi_{\EpsVTOL}(\cdot) -
    \varphi_{\text{qs}}(\cdot)}{L_\infty}\leq \sin^{-1}\frac{\EpsVTOL}{\EpsVTOLo}
  \]
  proving statement (ii).
\end{proof}

%
%

Next theorem follows easily from the results above.
\begin{theorem}[Exact lifting for the PVTOL]
  \label{thm:PVTOL_uncstr_lift}
  Let $\subscr{\yout}{d}(t)$, $t\in[0, T]$, be a desired $\CC^4$ output
  curve for the PVTOL. Then there exists a trajectory $(x^*(\cdot),
  u^*(\cdot))$, with initial condition $x_0=x^*(0)$, such that $p(x^*(t)) =
  \subscr{\yout}{d}(t)$ for all $t\in[0,T]$.
\end{theorem}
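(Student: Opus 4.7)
The plan is to assemble the result from the feedback transformed picture of Section~\ref{sec:unconstr_lift} together with Theorem~\ref{thm:phi_eps_continuity}. First I would work with the feedback transformed dynamics \eqref{eq:PVTOLfbk}, assuming the desired $\CC^4$ output $(\subscr{y}{d}(\cdot),\subscr{z}{d}(\cdot))$ on $[0,T]$ satisfies the annulus assumption (needed in order to invoke the earlier machinery). The natural choice is to set the pseudo-inputs to $v_1(t)=\subscr{\ddot{y}}{d}(t)$ and $v_2(t)=\subscr{\ddot{z}}{d}(t)$. With the initial conditions $y(0)=\subscr{y}{d}(0)$, $\dot y(0)=\subscr{\dot y}{d}(0)$, $z(0)=\subscr{z}{d}(0)$, $\dot z(0)=\subscr{\dot z}{d}(0)$, the outer equations $\ddot y=v_1$, $\ddot z=v_2$ integrate to $y(t)\equiv \subscr{y}{d}(t)$ and $z(t)\equiv \subscr{z}{d}(t)$ on $[0,T]$, so that $p(x^*(t))=\subscr{\yout}{d}(t)$ for every $t\in[0,T]$ by construction.

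Next I would produce the roll component. Since Theorem~\ref{thm:phi_eps_continuity} is stated on $(-\infty,+\infty)$, I would extend $\subscr{y}{d}(\cdot)$ and $\subscr{z}{d}(\cdot)$ to $\CC^4$ curves on all of $\real$ in such a way that $\subscr{a}{min}\leq \norm{\subscr{\avec}{d}(t)}{}\leq \subscr{a}{max}$ globally and $\subscr{\ddot\varphi}{qs}(\cdot)$ remains bounded (for instance by smoothly prolonging the desired accelerations to admissible constants outside a neighborhood of $[0,T]$). With the extension in hand, Theorem~\ref{thm:phi_eps_continuity} delivers, for every $\EpsVTOL\in(0,\EpsVTOLo)$, a bounded $\varphi_{\EpsVTOL}(\cdot)$ solving the roll equation \eqref{eq:Rolldyn} on all of $\real$. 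Restricting this solution to $[0,T]$ and reading off $\varphi^*(0)=\varphi_{\EpsVTOL}(0)$ and $\dot\varphi^*(0)=\dot\varphi_{\EpsVTOL}(0)$ determines the remaining components of $x_0$.

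Finally I would recover the original inputs by inverting the feedback \eqref{eq:u1u2inv}: the matrix appearing there is orthogonal for every $\varphi$, so the map $(u_1,\EpsVTOL u_2)\leftrightarrow(v_1,v_2)$ is a smooth bijection, and substituting $v_1=\subscr{\ddot y}{d}$, $v_2=\subscr{\ddot z}{d}$ together with $\varphi_{\EpsVTOL}(\cdot)$ yields bounded inputs $u_1^*(\cdot)$, $u_2^*(\cdot)$ on $[0,T]$. The triple $(y,z,\varphi)$ together with their derivatives then forms a bounded trajectory $(x^*(\cdot),u^*(\cdot))$ of the original PVTOL \eqref{eq:PVTOL}, achieving $p(x^*(t))=\subscr{\yout}{d}(t)$ identically on $[0,T]$.

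The main obstacle is precisely the extension step: the bounded roll trajectory in Theorem~\ref{thm:phi_eps_continuity} is produced by a noncausal fixed-point argument on $\real$, so one must argue that the given $\CC^4$ data on $[0,T]$ admits a $\CC^4$ extension to $\real$ that preserves both bounds of the annulus assumption as well as boundedness of $\subscr{\ddot\varphi}{qs}$. This is a routine but not entirely automatic smooth extension; once it is in place, the rest of the proof is a clean application of the results already established in the section.
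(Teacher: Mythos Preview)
Your approach is essentially identical to the paper's: the paper also obtains the external positions and velocities directly from $\subscr{\yout}{d}$ and its derivatives, and takes the roll and roll-rate on $[0,T]$ as the restriction of the infinite-horizon bounded solution furnished by Theorem~\ref{thm:phi_eps_continuity}. The paper's proof is considerably terser and does not spell out the $\CC^4$ extension of the data to $(-\infty,+\infty)$ nor the recovery of $(u_1,u_2)$ via \eqref{eq:u1u2inv}; your write-up simply makes these implicit steps explicit.
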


\begin{proof}
  The external velocities are easily the derivatives of the desired outputs,
  while the roll and roll rate trajectories $\phi^*(\cdot)$ and
  $\dot{\phi}^*(\cdot)$ on $[0,T]$ can be chosen as the restriction to $[0,T]$
  of the trajectories on the infinite horizon. These are proven to exist by
  Theorem~\ref{thm:phi_eps_continuity}.
\end{proof}

\section{Optimal control based strategy for feasible trajectory lifting}
\label{sec:explor_strategy}
In this section we provide a strategy to solve the practical feasible trajectory
lifting task.  The idea is to attack the problem by means of optimal control
combined with continuation and relaxation methods.

Let $\xi = (x(t), u(t))$, $t\in [0,T]$, be a state-input
curve. If $(x(t), u(t)) \in \cXU$ for almost all $t\in[0,T]$, we say that $\xi$
is a feasible curve and write $\xi\in\cXU$.
Proceeding formally, to solve the practical feasible lifting task in
Definition~\ref{def:feas_traj_lift}, we should solve the following optimal
control problem
\[
\begin{split}
  \min_{\xi\in\TT} & \; \norm{p(x(\cdot)) - \subscr{\yout}{d}(\cdot)}{L_2}\\
  \subj & \; \xi \in \cXU.
\end{split}
\]
The main idea behind our strategy is not to attack the constrained lifting
problem directly, but to embed it into a family of relaxed problems and use
continuation with respect to parameters to find a solution.
Informally, we perform the following steps: (i) embed the maneuvering system
into a family of systems, (ii) design a routine to solve the unconstrained
lifting task for a fixed embedding system in order to obtain a desired
(state-input) trajectory $\subscr{\xi}{d}$, (iii) minimize a weighted $L_2$
distance from the infeasible unconstrained lifted trajectory, $\subscr{\xi}{d}$,
over the feasible trajectories, (iv) relax the constrained optimal control
problem and design a solver for the relaxed problem (for each fixed embedding
system), and (v) design an update policy for the system and problem parameters.


\emph{Embedding systems}\\
We embed the maneuvering system into a family of systems with the property that,
for some choice of the parameters, the system has a ``special'' structure. That
is, there exist values of the parameters for which the (unconstrained) lifting
task can be solved more easily (e.g. because the system is differentially flat
or has some ``nice'' geometry).  As regards the PVTOL, we consider the family of
PVTOL models parametrized by the coupling parameter $\EpsVTOL$ with
$\EpsVTOL\geq0$. From the results in the previous section we know that we can
easily solve the unconstrained lifting task for the decoupled system, i.e. for
$\EpsVTOL=0$, and that an unconstrained trajectory is proved to exist for some
positive values of the parameter.

\emph{Unconstrained lifting}\\ 
The objective of this step is to obtain a (state-input) trajectory solving the
unconstrained lifting task to use as the desired curve for the constrained
problem. We use a \emph{dynamic embedding} technique introduced in
\cite{JH-AS-RF:04}. Informally, it consists of embedding the original system
into a fully controllable system and solve the practical trajectory lifting by
penalizing the \emph{embedding input} much more than the real ones. We describe
an ad-hoc version of this technique for the PVTOL, but it can be easily
generalized to other maneuvering systems.

First, observe that for $\EpsVTOL=0$ a lifted trajectory of the PVTOL can be
easily obtained by equations in \eqref{eq:lift_pvtol0}.
For $\EpsVTOL>0$ we know by Theorem~\ref{thm:phi_eps_continuity} that, under
suitable conditions on $\subscr{\yout}{d}(\cdot)$, there exists a trajectory (on
the infinite time horizon) solving the exact lifting task, and it depends
continuously on $\EpsVTOL$. 
An approximation of this trajectory on the finite horizon can be computed in
three steps: (i) compute the external position and velocities trajectories, (ii)
compute the internal trajectories, (iii) compute the input trajectories. The
first step is straightforward. The external position trajectories are simply
given by the desired curves and the velocities are obtained by
differentiation. The input trajectories can be easily computed by
equation~\eqref{eq:u1u2inv} once the trajectory of the internal position state
(the roll trajectory) is known. Thus, the dynamic embedding technique is
applied to compute the roll and roll rate trajectories. We use the dynamics in
the new coordinates \eqref{eq:PVTOLfbk} and find a trajectory of the
reduced system in \eqref{eq:Rolldyn}.
We embed the roll dynamics into the driven system
\begin{equation}
  \EpsVTOL \ddot \varphi = (g-\subscr{\ddot{z}}{d}(t))\sin \varphi
  -\subscr{\ddot{y}}{d}(t) \cos \varphi + \EpsVTOL \subscr{u}{emb},
  \label{eq:Rolldyn_EXT}
\end{equation}
where $\subscr{u}{emb}$ is the embedding input used to drive the system along
any desired admissible trajectory. 
If we rewrite \eqref{eq:Rolldyn_EXT} in state space form as $\dot x_{\varphi} =
f_{\varphi}(t, x_\varphi, u_{ext})$, where $x_{\varphi}=(\varphi, \dot \varphi)$
and $x_{qs}=(\varphi_{qs}, \dot \varphi_{qs})$, the following optimization
problem may be posed
\begin{equation}
  \begin{array}{l}
    \minimize~
    \frac{1}{2}\int_{0}^{T}\|x_\varphi(\tau) - x_{qs}(\tau)\|
    ^{2}_{Q_\varphi} + \rr |\subscr{u}{emb}(\tau)|^{2} d\tau+\frac{1}{2}
    \|x_\varphi(T) - x_{qs}(T) \| ^{2}_{P_\varphi}\\
    \text{subject to} ~ \dot x_\varphi = f_\varphi(t, x_\varphi,
    \subscr{u}{emb}), \qquad x_{\varphi}(0)=x_{qs}(0).
  \end{array}
  \label{eq:optim_dyn_embedding}
\end{equation}
where we use the \emph{quasi-static} trajectory as
a desired curve to find the actual trajectory,
%
%
${Q_\varphi}>0$ and ${P_\varphi}>0$ are positive definite weighting
matrices and $\rr >0$ is the weight of the embedding input. Using a sufficiently
high weight $\rr$ for the embedding input, we obtain a trajectory arbitrarily
close to the exact lifted trajectory. The optimization problem is solved by
using the projection operator based Newton method described in
Appendix~\ref{sec:prelim_proj_oper}.


We denote $\lift_{\EpsVTOL}$ the routine described above to solve the
unconstrained lifting.
Specifically, we let $\xi = \lift_{\EpsVTOL}(\xi_0; \subscr{\yout}{d}(\cdot))$
be a trajectory solving the practical lifting task for the desired output curve
$\subscr{\yout}{d}(\cdot)$ and computed by using $\xi_0$ as initial guess. The
routine is parametrized by the coupling parameter $\EpsVTOL$ of the PVTOL
dynamics.
For $\EpsVTOL=0$ the lifting procedure does not need any initial guess. Thus, we
simply write $\xi =\lift_{0}(\subscr{\yout}{d}(\cdot))$.

\emph{Constrained $L_2$ distance minimization and optimal control relaxation}\\
With a full unconstrained trajectory $\subscr{\xi}{d}=(\subscr{x}{d}(\cdot),
\subscr{u}{d}(\cdot))$ in hand, we can pose the following constrained optimal control
problem, where we minimize a weigthed $L_2$ distance from $\subscr{\xi}{d}$
subject to feasibility,
\begin{equation*}
  \begin{split}
    \min_{(x(\cdot), u(\cdot))} &\; \half \int_0^T \norm{x(\tau) -
      \subscr{x}{d}(\tau)}{Q}^2 + \half \norm{u(\tau) -
      \subscr{u}{d}(\tau)}{R}^2 \; d\tau + \half \norm{x(T) -
      \subscr{x}{d}(T)}{P_f}^2\\[1.2ex]
    \subj      &\; \dot{x}(t) = f(x(t), u(t)), \quad x(0)=x_0\\[1.2ex]
    &\; (x(t), u(t))\in\cXU, \qquad \text{for a.a. } t\in[0,T]
  \end{split}
\end{equation*}
where $Q$, $R$ and $P_f$ are positive definite matrices. The idea is to choose
$Q$, $R$ and $P_f$ so that the weights associated to the outputs (external
position states) are much larger than the weights of the other sates and the
inputs. 
Denoting $\norm{\xi-\xi_d}{L_2} := \half \int_0^T \norm{x(\tau) -
  \subscr{x}{d}(\tau)}{Q}^2 + \norm{u(\tau) - \subscr{u}{d}(\tau)}{R}^2 \; d\tau
+ \half \norm{x(T) - \subscr{x}{d}(T)}{P_f}^2$, we can rewrite the problem in a
more compact notation as
\begin{equation}
  \begin{split}
    \min_{\xi\in\TT_{\EpsVTOL}} & \; \norm{\xi-\xi_d}{L_2}\\
    \subj & \; \xi \in \cXU,
  \end{split}
  \label{eq:constr_min}
\end{equation}
where we have denoted $\TT_{\EpsVTOL}$ the trajectory manifold for a given value
of the parameter $\EpsVTOL$.



Now, we introduce a relaxed version of the above optimal control problem. To do
that, we first define a relaxed version of the feasibility region. That is, we
parametrize the feasibility region by means of a scaling factor $\rhoFR\in[0,1]$
that allows to enlarge the nominal region up to a larger one containing the
unconstrained lifted trajectory $\xi_d$.
\begin{definition}[Scalable feasibility region]
  A scalable feasibility region is defined as
  \[
  \cXU_\rhoFR = \{ (x,u) \in \real^n \times \real^m |~ c_j(x,u; \rhoFR) \leq 0,
  \rhoFR\in[0,1], ~ j \in\until{k}\}
  \]
  such that
  \begin{enumerate}
  \item $c_j(x,u; \rhoFR)$, $j\in\until{k}$, is $\CC^2$ in $x$ and $u$ and
    varies smoothly with $\rhoFR$.
  \item for any $\rhoFR\in[0,1]$, $\XU_\rhoFR$, the interior of $\cXU_\rhoFR$,
    is a nonempty simply connected set.
  \item for any $0<\rhoFR_1 < \rhoFR_2$, $\XU_{\rhoFR_1} \supset
    \XU_{\rhoFR_2}$;
  \item the projection of $\XU_{\rhoFR}$ on the input space
    $\pi_u \XU_\rhoFR = \{u \in \real^m | (x, u) \in \XU_\rhoFR
    ~\forall~\text{fixed}~ x\}$
    is convex;
  \item for every desired output trajectory $\subscr{\yout}{d}(\cdot)$, the
    lifted trajectory $\subscr{\xi}{d} = (\subscr{x}{d}(\cdot),
    \subscr{u}{d}(\cdot))$ (if it exists) is such that $\exists~
    \rhoFR_0\in[0,1]$ such that $(\subscr{x}{d}(t), \subscr{u}{d}(t)) \in
    \XU_{\rhoFR_0}$ for every $t \in [0,T]$.   
  \end{enumerate}
\end{definition}

With this definition in hand we can introduce the relaxed version of the optimal
control problem in \eqref{eq:constr_min}. We use the barrier functional idea
described in Appendix~\ref{sec:prelim_proj_oper}. Namely, we add to the cost
functional a barrier term to enforce feasibility with respect to the point-wise
constraints. Thus, the optimal control problem relaxation is given by
\begin{equation}
  \begin{split}
    \min_{\xi \in \TT_{\EpsVTOL}} \;
    \norm{\xi-\xi_d}{L_2} + \EpsBeta b_{\DelBeta, \rhoFR}(\xi)
  \end{split}
  \label{eq:opt_contr_relax_pvtol}
\end{equation}
where $\TT_{\EpsVTOL}$ is the trajectory manifold for a given value of
$\EpsVTOL$ and $b_{\DelBeta, \rhoFR}(\xi)$ is a barrier functional defined
consistently with \eqref{eq:barrier-functional}. It is worth noting that here
the barrier functional is parametrized also by $\rhoFR$ because the constraints
are.
%
The relaxed optimal control problem is solved by using the projection operator
Newton method in Appendix~\ref{sec:prelim_proj_oper}.

Next, we introduce some useful notation. For a given scalable feasibility region
$\XU_\rhoFR$, parametrized by $\rhoFR$, we denote
$\ponewt_{\EpsVTOL}(\subscr{\xi}{d}, \xi_0; \EpsBeta, \rhoFR)$ a routine that
takes as inputs a desired (state-control) curve $\subscr{\xi}{d}$ and an initial
trajectory $\xi_0$, and computes a feasible trajectory $\xi \in\TT_{\EpsVTOL}$,
with $\xi\in\XU_\rhoFR$, by solving the nonlinear optimal control relaxation in
\eqref{eq:opt_contr_relax_pvtol}. The routine is parametrized by the embedding
parameter $\EpsVTOL$ (the coupling parameter for the PVTOL), the parameter
$\EpsBeta$ scaling the barrier functional (see
Appendix~\ref{sec:barrier_functional}) and the parameter $\rhoFR$ scaling the
feasibility region.


We are now ready to define our strategy. First, we provide an informal
description. From now on we call \emph{unconstrained lifted trajectory} a
trajectory solving the practical (unconstrained) lifting task.

\begin{quote}
  \emph{Lift and Constrain Strategy}: The strategy consists of the following
  steps: (i) given a desired output curve $\subscr{\yout}{d}(\cdot)$, an
  unconstrained lifted trajectory for an initial embedding system is computed
  (e.g., for the decoupled PVTOL with $\EpsVTOL=0$); (ii) a continuation update
  on the parameter $\EpsVTOL$ is applied up to the nominal value
  $\EpsVTOL^{\text{nom}}$; (iii) a relaxed optimal control problem is solved for
  $\rhoFR=0$ (feasibility region containing the lifted trajectory) and $\EpsBeta
  = {\EpsBeta}_0$ (for a suitable ${\EpsBeta}_0 >0$); (iv) a continuation update
  on the parameter $\rhoFR$ is applied shrinking the feasibility region up to
  its nominal value for $\rhoFR=1$; (v) a continuation update on the parameter
  $\EpsBeta$ is applied to regulate the closeness of the feasible trajectory
  from the boundary ($\EpsBeta\rightarrow {\EpsBeta}_e$ for some ${\EpsBeta}_e$).
\end{quote}

Next we give a pseudo-code description of the strategy.

\begin{center}
  \begin{minipage}{0.5\linewidth}

    \medskip \hrule width \linewidth \smallskip

    \noindent\begin{minipage}{0.44\linewidth}\textbf{\texttt{Strategy:}}%
    \end{minipage}%
    \begin{minipage}{0.56\linewidth} Lift and Constrain Strategy
    \end{minipage}\\[1.2ex]
    \noindent\begin{minipage}{0.44\linewidth}\textbf{\texttt{Task:}}%
    \end{minipage}%
    \begin{minipage}{0.56\linewidth} Feasible trajectory lifting%
    \end{minipage}\\[1.2ex]
    \noindent\begin{minipage}{0.44\linewidth}\textbf{\texttt{Inputs:}}%
    \end{minipage}%
    \begin{minipage}{0.56\linewidth}
      $\EpsVTOL^{\text{nom}}$, $\cXU$, $\subscr{\yout}{d}(\cdot)$, ${\EpsBeta}_e$%
    \end{minipage}\\[1.2ex]
    \noindent\begin{minipage}{0.44\linewidth}\textbf{\texttt{Output:}}%
    \end{minipage}%
    \begin{minipage}{0.56\linewidth}
      $\subscr{\xi}{c}\in\TT_{\EpsVTOL^{\text{nom}}}$ \; \text{with} \; $\subscr{\xi}{c}\in\XU$%
    \end{minipage}\\[1.2ex]

    \noindent\begin{minipage}{0.44\linewidth}\textbf{\texttt{Parameters:}}\\[1.2ex]%
    \end{minipage}%
    \begin{minipage}{0.56\linewidth}
      $(\EpsVTOL, \EpsBeta, \rhoFR) \in\real^3_{\geq0}$\\[1.2ex]
      $(\subscr{\xi}{d}, \subscr{\xi}{c}) \in\TT_{\EpsVTOL}\times\TT_{\EpsVTOL}$%
    \end{minipage}\\[1.2ex]

    \noindent\begin{minipage}{0.44\linewidth}\textbf{\texttt{Initialization:}}\\[1.2ex]%
      ~\\[1.2ex]
    \end{minipage}%
    \begin{minipage}{0.56\linewidth}
      $\EpsVTOL :=0$, $\EpsBeta :={\EpsBeta}_0$, $\rhoFR :=0$\\[1.2ex]
      $\subscr{\xi}{d} :=\lift_0(\subscr{\yout}{d}(\cdot))$\\[1.2ex]%
      $\subscr{\xi}{c} := \subscr{\xi}{d}$
    \end{minipage}

    \medskip

    \begin{itemize}
    \item[1.] \textbf{\texttt{WHILE}} $\EpsVTOL < \EpsVTOL^{\text{nom}}$
      \textbf{\texttt{DO}}

      increase $\EpsVTOL$

      $\subscr{\xi}{d} = \lift_{\EpsVTOL}(\subscr{\xi}{d};
      \subscr{\yout}{d}(\cdot))$\\%
      \textbf{\texttt{END}}
    \item[2.] \textbf{\texttt{WHILE}} $\rhoFR < 1$ \textbf{\texttt{DO}}

      increase $\rhoFR$


      $\subscr{\xi}{c} = \ponewt_{\EpsVTOL}(\subscr{\xi}{d}, \subscr{\xi}{c};
      \EpsBeta, \rhoFR)\\$
      \textbf{\texttt{END}}
    \item[3.] \textbf{\texttt{WHILE}} $\EpsBeta >
      {\EpsBeta}_e$ 
      \textbf{\texttt{DO}}

      decrease $\EpsBeta$


      $\subscr{\xi}{c} = \ponewt_{\EpsVTOL}(\subscr{\xi}{d}, \subscr{\xi}{c};
      \EpsBeta, \rhoFR)\\$
      \textbf{\texttt{END}}

    \item[4.] \textbf{\texttt{RETURN}} $\subscr{\xi}{c}$
    \end{itemize}

    \medskip \hrule width \linewidth \smallskip

  \end{minipage}

\end{center}

\begin{remark}[Variations of the strategy]
  Other variations of the above strategy can be obtained by changing the order
  of or combining the update steps of some parameters in the continuation
  strategy. These different choices can be thought as degrees of freedom in the
  designer's hands and their effectiveness is strongly related to the system
  dynamics and to the feasibility constraints.
\end{remark}


\section{Strategy analysis}
\label{sec:strategy-analysis}
In this section we prove that under suitable conditions on the feasibility
region we can find a feasible trajectory that solves locally the practical
lifting task in Definition~\ref{def:pract_feas_traj_lift}.

\subsection{Differentiability of an optimal control minimizer with respect to
  parameters}
\label{sec:chapTE-existence-results}%
We start providing a supporting result to prove the existence of a feasible
trajectory. Namely, we prove, under suitable conditions, continuity and
differentiability of an optimal control minimizer with respect to parameters.
We present this result in a separate subsection for two reasons. First, this is
the most subtle part to prove the existence of a feasible trajectory. Second,
we believe this is an important stand alone result.



We consider an optimal control problem where the cost functional depends
smoothly on a finite dimensional parameter and the system is independent of the
parameter. In this section, we will refer to this parameter as
$\rho\in\real^p$. This parameter may include, for instance, the scalar parameter
$\rhoFR$ used for specifying the size of the feasible region as well as the
scalar parameter $\EpsBeta$ used in determining strictly feasible trajectories.
We thus write the minimization problem
\[
\min_{\xi\in\TT} h(\xi,\rho).
\]
Using Lemma~\ref{lem:min_via_projection} in Appendix~\ref{sec:prelim_proj_oper},
we can look for an unconstrained local minimum of the functional
%
\[
g_\rho(\xi) = h(\PP(\xi),\rho).
\]
We will suppose that the scaling and offset of the parameters have been chosen
in such a manner that the nominal value of the parameter vector is $\rho=0$.
%
%
%

\begin{remark}[Parametrization with respect to system parameters]
  The parameter $\rho$ does not include the system parameter $\EpsVTOL$. Indeed,
  the parameter $\EpsVTOL$ affects the dynamics and, thus, the projection
  operator $\PP$. The following results hold true for the case where also the
  projection operator depends on the parameter,
  i.e. $g_\rho(\xi)=h(\PP(\xi,\rho),\rho)$, provided the projection operator is
  shown to depend smoothly on the system parameter.
\end{remark}

Let $\xi_0\in\TT$ and consider the nature of $g_0(\xi)$ on a neighborhood of
$\xi_0$. In particular, we consider $\xi$ of the form $\xi_0+\zeta$ where
$\|\zeta\|<\delta$ and $\delta>0$ is such that $\xi_0+\zeta\in \text{dom}\;\PP$
for each such $\zeta$. For $\CC^2$ $g_0(\cdot)$, we have
\begin{equation}
  g_0(\xi_0+\zeta)
  =
  g_0(\xi_0)
  + Dg_0(\xi_0)\cdot\zeta
  + \frac{1}{2} D^2g_0(\xi_0)\cdot(\zeta,\zeta)
  + r(\xi_0,\zeta)\cdot(\zeta,\zeta)
  \label{eq:expand_g}
\end{equation}
where the remainder satisfies
\begin{equation}
  | r(\xi_0,\zeta)\cdot(\zeta,\zeta) | / \|\zeta\|^2 \to 0
  \text{~~as~~}
  \|\zeta\| \to 0
  \label{eq:remainder_linf}
\end{equation}
where $\|\cdot\|$ is the $L_\infty$ norm. Using the $\CC^2$ identity
\[
\phi(1) = \phi(0) + \phi'(0) + \int_0^1 (1-s)\,\phi''(s)\, ds
\]
together with $\phi(s) = g_0(\xi_0 + s\zeta)$, we obtain the explicit expression
\begin{equation}
  r(\xi_0,\zeta)\cdot(\zeta_1,\zeta_2)
  =
  \int_0^1
  (1-s)
  \left[
    D^2g_0(\xi_0+s\zeta) - D^2g_0(\xi_0)
  \right]
  ds \cdot (\zeta_1,\zeta_2)
  \label{eq:r_z_z}
\end{equation}
which has been slightly generalized to depend on three, possibly independent,
perturbations. Using the fact that $D^2g_0(\cdot)$ is continuous as a mapping
from the trajectory manifold $\TT$ to set of continuous bilinear functionals on
$L_\infty$, we easily verify that the remainder
$r(\xi_0,\zeta)\cdot(\zeta,\zeta)$ defined by (\ref{eq:r_z_z}) satisfies, as it
\emph{must}, the higher order property (\ref{eq:remainder_linf}). Equations
(\ref{eq:expand_g}), (\ref{eq:r_z_z}) provide a \emph{second order expansion
  with remainder} formula for the $\CC^2$ mapping $g_0(\cdot)$, valid in an
$L_\infty$ neighborhood of any $\xi_0\in\TT$.
In fact, the formula given by (\ref{eq:expand_g}), (\ref{eq:r_z_z}) is somewhat
more general, requiring only that $\xi_0\in L_\infty$ and $\delta>0$ are such
that $B_\delta(\xi_0)\subset \text{dom}\;\PP$.

Now, since the functional $g_0(\cdot)$ is the composition of an integral
functional and a projection operator, from \cite{JH:02} the value of the bilinear expression
$D^2g_0(\xi)\cdot(\zeta_1,\zeta_2)$ for $\xi\in \text{dom}\;\PP$ and $\zeta_i\in
L_\infty$ is of the form
\[
D^2g_0(\xi)\cdot(\zeta_1,\zeta_2) = \int_0^T \gamma_1(\tau)^T W(\tau)
\gamma_2(\tau) \; d\tau + (\pi_1\gamma_1(T))^T P_f (\pi_1\gamma_2(T)),
\]
where $\gamma_i = D\PP(\xi)\cdot\zeta_i$ and where $P_f = P_f^T\in\real^{n\times
  n}$ and the bounded matrix $W(t) = W(t)^T\in\real^{n\times n}$, $t\in[0,T]$,
depend continuously on $\eta = \PP(\xi)$, hence continuously on $\xi$. Using
these facts, we see that
\begin{lemma}
  Let $\xi_0\in\TT$ and suppose that $\delta>0$ is such that
  $B_\delta(\xi_0)\subset\text{dom}\;\PP$.  Then, there is a nondecreasing
  function $\bar{r}(\cdot)$ with $\bar{r}(0)=0$ such that
  \begin{equation}
    | r(\xi_0,\zeta)\cdot(\zeta_1,\zeta_2) |
    \le
    \bar{r}(\|\zeta\|) \; \|\zeta_1\|_{L_2} \|\zeta_2\|_{L_2}
    \label{eq:remainderL2bound}
  \end{equation}
  for all $\zeta,\zeta_1,\zeta_2\in B_\delta$.
  \label{lmm:remainderL2bound}
\end{lemma}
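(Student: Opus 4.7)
The plan is to work from the explicit integral formula \eqref{eq:r_z_z} for the remainder, combined with the bilinear structure of $D^2 g_0$ displayed immediately above the lemma statement. The key idea is that the trajectory-valued linearizations $\gamma_i = D\PP(\xi)\cdot\zeta_i$ can be controlled in $L_2$ in terms of the $L_2$ norms of the $\zeta_i$, while the continuity-in-$\xi$ modulus is controlled by $\|\zeta\|_{L_\infty}$; once these two estimates are in hand, the bound \eqref{eq:remainderL2bound} follows by splitting the difference of bilinear forms into a few elementary pieces.

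From \eqref{eq:r_z_z} I would first extract
\[
|r(\xi_0,\zeta)\cdot(\zeta_1,\zeta_2)| \;\le\; \frac{1}{2} \sup_{s\in[0,1]} \Big| \big(D^2 g_0(\xi_0+s\zeta) - D^2 g_0(\xi_0)\big) \cdot (\zeta_1,\zeta_2) \Big|.
\]
For each $\xi\in B_\delta(\xi_0)$, the bilinear form $D^2 g_0(\xi)\cdot(\zeta_1,\zeta_2)$ splits into the integral term against the bounded matrix $W=W(\xi)$ and the boundary term against $P_f=P_f(\xi)$, both evaluated on $\gamma_i = D\PP(\xi)\cdot\zeta_i$. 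The central ingredient is that $\gamma_i$ is the solution of a linear time-varying ODE with bounded coefficients (depending continuously on $\xi$), driven by $\zeta_i$ through a bounded feedback. Standard state-transition-matrix estimates then give
\[
\|\gamma_i\|_{L_2} \;\le\; C\,\|\zeta_i\|_{L_2}, \qquad |\pi_1\gamma_i(T)| \;\le\; C\,\|\zeta_i\|_{L_2},
\]
uniformly for $\xi\in \overline{B_{\delta}(\xi_0)}$. In particular, these give the a priori bound $|D^2 g_0(\xi)\cdot(\zeta_1,\zeta_2)| \le C'\,\|\zeta_1\|_{L_2}\|\zeta_2\|_{L_2}$.

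Next I would expand the difference $\big(D^2 g_0(\xi_0+s\zeta) - D^2 g_0(\xi_0)\big)\cdot(\zeta_1,\zeta_2)$ as a sum of terms of two kinds: (a) terms where the weights $W,P_f$ are replaced by their differences $W(\xi_0+s\zeta)-W(\xi_0)$ (and the analogous one for $P_f$) while the $\gamma_i$'s are held fixed; and (b) terms where the weights are fixed at $\xi_0$ but one $\gamma_i$ is replaced by $\gamma_i(\xi_0+s\zeta) - \gamma_i(\xi_0)$. The weights depend continuously on $\xi$ through $\eta=\PP(\xi)$, and $\PP$ is continuous in $L_\infty$; the operator $D\PP(\xi)$ depends continuously on $\xi$ in the same sense since its coefficients do. Thus each contribution admits an estimate of the form $\omega(\|\zeta\|_{L_\infty})\,\|\zeta_1\|_{L_2}\,\|\zeta_2\|_{L_2}$ for some nondecreasing modulus $\omega$ with $\omega(0)=0$. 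Setting $\bar r(u):= \frac{1}{2}\,\omega(u)$ yields \eqref{eq:remainderL2bound}.

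The main obstacle I anticipate is establishing the uniform $L_2\!\to\!L_2$ continuity of $D\PP(\xi)$ in $\xi$: this requires that the feedback gains defining the projection operator in Appendix~\ref{sec:prelim_proj_oper} are smooth in their state-input argument so that the linearized closed-loop dynamics have coefficients that are continuous in $\xi$ as elements of $L_\infty$. The boundary term $\pi_1\gamma_i(T)$ needs separate attention but is handled by exactly the same transition-matrix estimate that gives the $L_2$ bound on $\gamma_i$ itself.
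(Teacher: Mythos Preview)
Your proposal is correct and follows essentially the same route as the paper: both arguments rest on the explicit remainder formula \eqref{eq:r_z_z} together with the fact that $D\PP(\xi)$ extends to a bounded operator with respect to the $L_2$ norm (which the paper cites from \cite{JH:02} and you justify via state-transition-matrix estimates). Your decomposition into type (a) and (b) terms simply makes explicit what the paper compresses into ``the result follows easily using \eqref{eq:r_z_z}.''
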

\begin{proof}
  Since $\PP$ is $\CC^2$, $D\PP(\xi)$ is a continuous linear projection operator
  with respect to the $L_\infty$ norm.  Using an explicit formula for
  $D\PP(\xi)\cdot\zeta$, it can be shown, \cite{JH:02}, that 
  $D\PP(\xi)$ may be extended to a linear projection operator $D\PP(\xi)_{L_2}$
  on $L_2$ that is continuous with respect to the $L_2$ norm.  The result
  follows easily using (\ref{eq:r_z_z}).
\end{proof}


Suppose now that $\xi_0\in\TT$ is a stationary trajectory of $g_0(\cdot)$ so
that $Dg_0(\xi_0)\cdot\zeta=0$ for all $\zeta\in L_\infty$ and that $\delta>0$
is such that $B_\delta(\xi_0)\subset\text{dom}\;\PP$. It follows that
\begin{equation}
  g_0(\xi_0+\zeta)
  \ge
  g_0(\xi_0) + \frac{1}{2} D^2g_0(\xi_0)\cdot(\zeta,\zeta)
  - \bar{r}(\|\zeta\|)\; \|\zeta\|^2_{L_2}
  \label{eq:g_xi_zeta_lower_bound}
\end{equation}
for all $\|\zeta\|<\delta$ where $\bar{r}(\cdot)$ is given by
Lemma~\ref{lmm:remainderL2bound}. Restricting (\ref{eq:g_xi_zeta_lower_bound})
to $\zeta\in T_{\xi_0}\TT$, we obtain the fundamental second order sufficient
condition (SSC) for $\xi_0$ to be an isolated local minimizer.

\begin{theorem}
  Suppose $\xi_0\in\TT$ is such that $Dg_0(\xi_0)\cdot\zeta=0$ for all $\zeta\in
  L_\infty$ and that there is a $c_0>0$ such that
  \begin{equation}
    D^2g_0(\xi_0)\cdot(\zeta,\zeta) \ge c_0\|\zeta\|^2_{L_2}
    \text{~~for all~~}   \zeta\in T_{\xi_0}\TT  \, .
    \label{eq:SSC}
  \end{equation}
  Then $\xi_0$ is an isolated local minimizer in the sense that there is a
  $\delta>0$ such that
  \[
  g_0(\xi_0) < g_0(\xi)
  \]
  for all $\xi\in\TT$ with $\|\xi-\xi_0\|<\delta$, $\xi\neq\xi_0$.
  \label{thm:SSC}
\end{theorem}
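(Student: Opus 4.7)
The plan is to combine the quadratic lower bound (\ref{eq:g_xi_zeta_lower_bound}), Lemma~\ref{lmm:remainderL2bound}, and the SSC hypothesis (\ref{eq:SSC}), using the projection operator $\PP$ to bridge from the tangent-space restriction in (\ref{eq:SSC}) to an inequality valid on the full trajectory manifold $\TT$. The key structural fact (from \cite{JH:02}, and already used in the proof of Lemma~\ref{lmm:remainderL2bound}) is that the bilinear form $D^2 g_0(\xi_0)\cdot(\zeta_1,\zeta_2)$ factors through the tangential components $\gamma_i := D\PP(\xi_0)\cdot\zeta_i \in T_{\xi_0}\TT$. Hence, for any $\zeta\in L_\infty$, applying SSC to $\gamma := D\PP(\xi_0)\cdot\zeta$ yields
\[
D^2 g_0(\xi_0)\cdot(\zeta,\zeta)
\;=\;
D^2 g_0(\xi_0)\cdot(\gamma,\gamma)
\;\ge\; c_0\,\|\gamma\|^2_{L_2}.
\]

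The second step is to specialize to $\xi\in\TT$. Set $\zeta = \xi - \xi_0$ and suppose $0<\|\zeta\|<\delta$ with $\delta\le\delta_0$ (where $\delta_0$ is such that $B_{\delta_0}(\xi_0)\subset\text{dom}\,\PP$). Since $\PP$ is the identity on $\TT$, the $\CC^2$ expansion of $\PP$ at $\xi_0$ gives the pointwise identity $\xi = \PP(\xi_0+\zeta) = \xi_0 + \gamma + r_\PP(\xi_0,\zeta)$, so that $\zeta - \gamma = r_\PP$. The same $L_2$-extension argument used in Lemma~\ref{lmm:remainderL2bound} (applied now to $\PP$ rather than $g_0$) supplies a mixed bound $\|r_\PP\|_{L_2} \le M'\|\zeta\|\,\|\zeta\|_{L_2}$. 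Shrinking $\delta$ if necessary so that $M'\delta \le 1/2$, we obtain $\|\gamma\|_{L_2}\ge \tfrac{1}{2}\|\zeta\|_{L_2}$ for all admissible $\zeta$, and in particular $\|\gamma\|_{L_2}>0$ whenever $\zeta\neq 0$.

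Substituting the two bounds into (\ref{eq:g_xi_zeta_lower_bound}) gives
\[
g_0(\xi) - g_0(\xi_0)
\;\ge\;
\tfrac{c_0}{2}\|\gamma\|^2_{L_2} - \bar r(\|\zeta\|)\,\|\zeta\|^2_{L_2}
\;\ge\;
\left(\tfrac{c_0}{8} - \bar r(\|\zeta\|)\right)\|\zeta\|^2_{L_2}.
\]
Shrinking $\delta$ one more time so that $\bar r(\delta) < c_0/8$ (possible because $\bar r(\cdot)$ is nondecreasing with $\bar r(0)=0$), the right-hand side is strictly positive whenever $0<\|\zeta\|<\delta$, proving $g_0(\xi)>g_0(\xi_0)$.

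The step I expect to be the main obstacle is the mixed-norm estimate $\|r_\PP\|_{L_2}\le M'\|\zeta\|_{L_\infty}\|\zeta\|_{L_2}$: the naive remainder bound $\|r_\PP\|_{L_\infty}\le M\|\zeta\|^2_{L_\infty}$, which is all one gets from $\CC^2$-ness in the $L_\infty$ topology, does \emph{not} suffice to dominate $\|\zeta\|^2_{L_2}$ in the final inequality, because $\zeta$ may have a narrow spike in the input component that makes $\|\zeta\|_{L_\infty}$ much larger than $\|\zeta\|_{L_2}^{1/2}$. This is precisely the analytical subtlety that forced Lemma~\ref{lmm:remainderL2bound} to be stated with $L_2$ norms on the perturbations rather than $L_\infty$ norms; once the same $L_2$-continuity of $D\PP(\xi_0)$ is exploited for the projection remainder itself, the rest of the argument is a direct completion-of-the-square.
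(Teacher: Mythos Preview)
Your argument is correct in outline but takes a harder route than the paper, and the extra difficulty is exactly the mixed-norm estimate on $r_\PP$ that you flag as the main obstacle. That estimate is plausible (it would follow from the same $L_2$-continuity properties of $D\PP$ that underlie Lemma~\ref{lmm:remainderL2bound}, applied now to the second variation of the flow), but it is not established in the paper and does require genuine work.

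The paper avoids this step entirely by a cleaner choice of perturbation. Instead of setting $\zeta=\xi-\xi_0$ (which is \emph{not} tangential, forcing you to project and then control the projection remainder), the paper invokes Theorem~\ref{thm:traj_man_repr} to write each nearby $\xi\in\TT$ as $\xi=\PP(\xi_0+\zeta)$ for a unique $\zeta\in T_{\xi_0}\TT$, with $\xi\mapsto\zeta$ continuous. Because $g_0=h\circ\PP$ and $\PP\circ\PP=\PP$, one has the \emph{exact} identity $g_0(\xi)=g_0(\xi_0+\zeta)$. Now $\zeta$ already lies in $T_{\xi_0}\TT$, so (\ref{eq:SSC}) applies directly in (\ref{eq:g_xi_zeta_lower_bound}): choosing $\delta_1$ with $\bar r(\delta_1)<c_0/4$ gives $g_0(\xi_0+\zeta)\ge g_0(\xi_0)+(c_0/4)\|\zeta\|_{L_2}^2$ for all tangential $\zeta$ with $\|\zeta\|<\delta_1$, and continuity of the chart supplies the final $\delta$. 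No comparison of $\|\gamma\|_{L_2}$ with $\|\zeta\|_{L_2}$, and hence no estimate on $r_\PP$, is needed.

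In short: you manufacture a tangential $\gamma$ from a non-tangential $\zeta$ and then must quantitatively relate the two; the paper simply starts with a tangential $\zeta$ via the chart and exploits that $g_0$ is constant along fibers of $\PP$. Your route would work once the $L_2$ remainder bound on $\PP$ is proved, but the paper's route is shorter and uses only what has already been established.
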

\begin{proof}
  Taking $\delta_1>0$ be such that $\bar{r}(\delta_1) < c_0/4$, we find that
  $g_0(\xi_0 + \zeta) \geq g_0(\xi_0) + (c_0/4) \|\zeta\|^2_{L_2}$ for all
  $\zeta\in T_{\xi_0}\TT$ with $\|\zeta\|<\delta_1$.  By
  Theorem~\ref{thm:traj_man_repr}, each $\xi\in\TT$ near $\xi_0$ can be
  represented by a unique $\zeta\in T_{\xi_0}\TT$ according to $\xi =
  \PP(\xi_0+\zeta)$ and the mapping $\xi\mapsto\zeta$ is continuous.  Thus there
  is a $\delta<\delta_1$ such that $\|\xi-\xi_0\|<\delta$ implies that
  $\|\zeta\|<\delta_1$.  The result follows.
\end{proof}
\noindent We call a local minimizer $\xi_0\in\TT$ satisfying (\ref{eq:SSC}) a
\emph{second order sufficient condition local minimizer}, SSC local minimizer
for short. According to Theorem~\ref{thm:SSC}, \emph{every} SSC local minimzer
is an \emph{isolated} local minimizer. We also note that, in words, the
condition (\ref{eq:SSC}) says that the quadratic functional $\zeta\mapsto
D^2g_0(\xi_0)\cdot(\zeta,\zeta)$ is \emph{strongly positive} on the subspace
$T_{\xi_0}\TT$.

Consider now the (local) minimization of $g_\rho(\xi)$ as the parameter $\rho$
is varied on a neighborhood of $\rho=0$ where $\xi_0$ is known to be an SSC
local minimizer of $g_0(\xi)$. Since $D^2g_\rho(\xi)$ is continuous in both
$\xi$ and $\rho$, we expect that, for each sufficiently small $\rho$, there will
be a corresponding SSC local minimizer $\xi_\rho$ near $\xi_0$ and that the
mapping $\rho\mapsto\xi_\rho$ will be continuous, and perhaps
differentiable. The key idea is to use an appropriate implicit function theorem
(IFT) to \emph{solve} the first order necessary condition equation
\begin{equation}
  Dg_\rho(\xi_\rho) = 0
  \label{eq:FNC}
\end{equation}
for $\xi_\rho$ as a function of $\rho$ starting from $\xi_0$ at
$\rho=0$. Proceeding formally, we differentiate (\ref{eq:FNC}) with respect to
$\rho$ to obtain
\[
\frac{\partial}{\partial \rho} Dg_\rho(\xi_\rho) + D\{Dg_\rho(\xi_\rho)\}\cdot
\xi_\rho' = 0 \, .
\]
Thus, the derivative of $\xi_\rho$ with respect to $\rho$, $\xi_\rho'$, if it
exists, is given formally by
\[
\xi_\rho' = - [ D\{Dg_\rho(\xi_\rho)\} ]^{-1} \cdot \frac{\partial}{\partial
  \rho} Dg_\rho(\xi_\rho) \, .
\]
In this case, we 
expect that there is an implicit function theorem that says something like, if
$D\{Dg_\rho(\xi_\rho)\}$ is invertible at $\rho=0$, then there is a neighborhood
of $\rho=0$ on which $\rho\mapsto\xi_\rho$ is well defined and $\CC^1$. In what
sense should the operator $D\{Dg_0(\xi_0)\}$ be invertible and how can it be
ensured? It turns out that the appropriate condition is that $D^2g_0(\xi_0)$ be
strongly positive on $T_{\xi_0}\TT$, i.e., that it satisfy (\ref{eq:SSC}).

\begin{theorem}
  Suppose that $\xi_0\in\TT$ is an SSC local minimizer of $g_0(\xi)$.  Then,
  there is a $\delta>0$ such that, for each $\rho$ such that $\|\rho\|<\delta$,
  there is a local SSC minimizer $\xi_\rho$ of $g_\rho(\xi)$ near $\xi_0$.
  Furthermore $\rho\mapsto\xi_\rho$ is continuously differentiable.
  \label{thm:cont_minimizer}
\end{theorem}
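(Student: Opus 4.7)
The plan is to apply an appropriate implicit function theorem to the first-order necessary condition $Dg_\rho(\xi_\rho)=0$ in~(\ref{eq:FNC}), starting from the known solution $(\xi_0,0)$. As already noted, a naive IFT in $L_\infty$ fails because $Dg_\rho(\xi)$ vanishes on $\ker D\PP(\xi)$, so the linearization at $\xi_0$ has a large kernel. The way around this is the observation that $g_\rho$ is constant on fibers of $\PP$, so the essential first-order condition lives on $\TT$. Using Theorem~\ref{thm:traj_man_repr} (invoked already in the proof of Theorem~\ref{thm:SSC}), I would parametrize a neighborhood of $\xi_0$ in $\TT$ by $\zeta\in T_{\xi_0}\TT$ via $\xi(\zeta)=\PP(\xi_0+\zeta)$ and work with the reduced functional $\tilde g_\rho(\zeta):=g_\rho(\xi_0+\zeta)=h(\PP(\xi_0+\zeta),\rho)$. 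The FNC then becomes $D\tilde g_\rho(\zeta_\rho)\cdot\eta=0$ for every $\eta\in T_{\xi_0}\TT$, with $\zeta_0=0$ solving the $\rho=0$ problem.

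Next, I would verify the invertibility hypothesis of the IFT for the map $F(\zeta,\rho):=D\tilde g_\rho(\zeta)\big|_{T_{\xi_0}\TT}$ at $(0,0)$. Its partial derivative with respect to $\zeta$ is the symmetric bilinear form $D^2\tilde g_0(0)$ on $T_{\xi_0}\TT$, which by the SSC~(\ref{eq:SSC}) satisfies $D^2\tilde g_0(0)\cdot(\zeta,\zeta)\ge c_0\norm{\zeta}{L_2}^2$. By the integral-plus-endpoint representation of $D^2 g_0(\xi_0)$ in terms of $D\PP(\xi_0)\cdot\zeta$ used already in the proof of Lemma~\ref{lmm:remainderL2bound}, this bilinear form is also continuous in the $L_2$ norm and hence extends to a strongly positive, bounded symmetric bilinear form on the closure $H$ of $T_{\xi_0}\TT$ in $L_2$. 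Lax--Milgram then produces a topological isomorphism $H\to H^*$ representing the Hessian, and joint smoothness of $h$ in $(\xi,\rho)$ delivers continuous dependence of $\partial_\zeta F$ on $(\zeta,\rho)$ together with the regularity of $\partial_\rho F(0,0)$ needed to invoke the IFT.

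The implicit function theorem then yields a $\CC^1$ map $\rho\mapsto\zeta_\rho$ on a neighborhood of $0$ solving $F(\zeta_\rho,\rho)=0$ with $\zeta_0=0$, and I would set $\xi_\rho:=\PP(\xi_0+\zeta_\rho)$, so that $\rho\mapsto\xi_\rho$ is $\CC^1$ by composition with the $\CC^2$ operator $\PP$. To conclude that each such $\xi_\rho$ is an SSC local minimizer of $g_\rho$, I would use joint continuity of $D^2 g_\rho(\xi)$ in $(\xi,\rho)$ together with continuity of $\rho\mapsto\xi_\rho$: for $\rho$ sufficiently small, the quadratic form $\zeta\mapsto D^2 g_\rho(\xi_\rho)\cdot(\zeta,\zeta)$ on $T_{\xi_\rho}\TT$ stays bounded below by, say, $(c_0/2)\norm{\zeta}{L_2}^2$, so the SSC is preserved and Theorem~\ref{thm:SSC} applies directly at $\xi_\rho$.

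The main obstacle is reconciling the SSC, which is coercivity in the $L_2$ norm, with the $L_\infty$ setting that is natural for the trajectory manifold. The Lax--Milgram-style isomorphism lives between the $L_2$ completion $H$ and $H^*$, but for the IFT output $\zeta_\rho$ to yield a trajectory $\PP(\xi_0+\zeta_\rho)\in\TT$ the solution must actually sit in $L_\infty$. The technical heart of the argument is therefore the bootstrap from $L_2$-coercivity to an IFT statement in the correct Banach space, which should follow from the structure of $D^2 g_\rho$ in terms of $D\PP$ and a regularity argument propagating $L_\infty$ boundedness of the data to the implicit solution.
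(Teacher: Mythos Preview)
Your overall strategy matches the paper's: parametrize $\TT$ near $\xi_0$ by $\gamma\in X:=T_{\xi_0}\TT$ via $\xi=\PP(\xi_0+\gamma)$, reduce the first-order condition to this tangent space, and use the SSC as the invertibility hypothesis for an implicit-function argument. The difference lies in how the $L_2$--$L_\infty$ tension you correctly flag is resolved. Rather than invoking Lax--Milgram on the $L_2$-completion $H$ and then bootstrapping back to $L_\infty$, the paper constructs the Hessian inverse $\SS$ directly as the solution operator of the LQ-type problem $\lambda=\arg\min_{\zeta\in X}\bigl\{-\omega\cdot\zeta+\tfrac12 D^2g_0(\xi_0)\cdot(\zeta,\zeta)\bigr\}$; because the functionals $\omega$ that arise all have the integral-plus-endpoint form, $\SS$ lands in $X\subset L_\infty$ from the outset. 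The paper then runs the IFT by hand, showing that $\GG_\rho(\gamma)=\SS\cdot\bigl[D^2g_0(\xi_0)\cdot(\gamma,\cdot)-Dg_\rho(\xi_0+\gamma)\bigr]$ is a contraction on a small $L_\infty$ ball for small $\rho$, and obtains continuity of $\rho\mapsto\gamma_\rho$ from the standard fixed-point estimate. Your Lax--Milgram route would ultimately work, but the ``regularity argument propagating $L_\infty$ boundedness'' that you leave as the technical heart is exactly what the paper's LQ-based construction of $\SS$ replaces, allowing the entire argument to stay in $L_\infty$ and avoid any bootstrap.
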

\begin{proof}
  The key is to show that, for $\rho$ sufficiently small, we can compute a
  $\xi\in\TT$ such that $Dg_\rho(\xi)\cdot\zeta = 0$ for all $\zeta\in
  L_\infty$. Using again Theorem~\ref{thm:traj_man_repr}, we proceed by
  parametrizing $\xi\in\TT$ locally by $\gamma\in T_{\xi_0}\TT$ according to
  $\xi = \PP(\xi_0+\gamma)$ and searching over $\gamma$.  As in the proof of
  many IFTs, we solve for the desired $\gamma$ using a contraction mapping.  For
  simplicity, we will denote $T_{\xi_0}\TT$ by $X$ so that we search for
  $\gamma\in X$ such that $Dg_\rho(\PP(\xi_0+\gamma))\cdot\zeta=0$ for all
  $\zeta\in L_\infty$.

  First, note that, since $D^2g_0(\xi_0)$ is strongly positive on $X$, the
  well-defined quadratic minimization problem
  \[
  \lambda = \text{arg}\;\min_{\zeta\in X} \, -\omega\cdot\zeta + \frac{1}{2}
  D^2g_0(\xi_0)\cdot(\zeta,\zeta)
  \]
  defines a linear mapping $\SS:\omega\mapsto\lambda:\text{dom}\;\SS\subset
  X^{*} \to X$ for some continuous linear functionals $\omega\in X^{*}$.  The
  linear mapping $\SS$ provides the solution $\lambda\in X$ to the functional
  equation
  \[
  D^2g_0(\xi_0)\cdot(\lambda,\zeta) = \omega\cdot\zeta, ~~ \zeta\in X,
  \]
  effectively providing an inverse to the operator $D\{Dg_0(\xi_0)\}$ formally
  described above.  We will see that the functionals $\omega\in X^{*}$ of
  interest belong to the domain of $\SS$.

  Define $\FF_\rho: X \to X^{*}$ by
  \[
  \FF_\rho(\gamma)\cdot\zeta = D^2g_0(\xi_0)\cdot(\gamma,\zeta) -
  Dg_\rho(\xi_0+\gamma)\cdot\zeta
  \]
  for all $\zeta\in X$.  Note that $\FF_\rho(\gamma)\cdot\zeta$ is of the form
  \[
  \FF_\rho(\gamma)\cdot\zeta = \int_0^T a(\tau)^T z(\tau) + b(\tau)^T v(\tau)\;
  d\tau + r_1^T z(T)
  \]
  for $\zeta=(z(\cdot),v(\cdot))\in X$ where $a(\cdot)$, $b(\cdot)$, and $r_1$
  depend smoothly on the data $\rho$ and $\gamma$.  It follows that
  $\FF_\rho(\gamma)\in\text{dom}\;\SS\subset X^{*}$.  A straightforward
  calculation shows that
  \[
  \GG_\rho(\gamma) = \SS \cdot \FF_\rho(\gamma)
  \]
  defines a continuous operator $\GG_\rho : X\to X$ that is also continuous in
  $\rho$.

  Note that, if $\gamma\in X$ is a fixed point of $\GG_\rho(\cdot)$,
  $\gamma=\GG_\rho(\gamma)$, then \mbox{$Dg_\rho(\xi_0+\gamma)\cdot\zeta=0$} for
  all $\zeta\in X$.  This will imply that $Dg_\rho(\xi_0+\gamma)\cdot\zeta=0$
  for all $\zeta\in L_\infty$ provided that $\PP(\xi_0+\gamma)$ is sufficiently
  near $\xi_0$.  In that case, we conclude that $\xi_\rho = \PP(\xi_0+\gamma)$.
  Also, for $\rho=0$, we see that $\gamma=0$ is the fixed point, $\GG_0(0) = 0$
  as expected.

  We will show that, for $\rho$ sufficiently small, $\GG_\rho(\cdot)$ is a
  contraction mapping with a unique fixed point.  For $\rho=0$, noting that
  $Dg_\rho(\xi_0+\gamma)\cdot(\cdot) = D^2g_0(\xi_0)\cdot(\gamma,\cdot) +
  o(\|\gamma\|)$, we see that
  \[
  \GG_0(\gamma)
  = \SS\cdot (D^2g_0(\xi_0)\cdot(\gamma,\cdot) -
  Dg_\rho(\xi_0+\gamma)\cdot(\cdot)) = o(\|\gamma\|)
  \]
  where we have used the fact that $\SS$ is continuous (bounded) on the elements
  of $X^{*}$ of the noted form.  By continuity in $\rho$, we see that there
  exist $\rho_1,\delta>0$ such that
  \[
  \| \GG_\rho(\gamma) \| \le \delta
  \]
  whenever $\|\rho\|\le\rho_1$ and $\|\gamma\|\le\delta$.  Now, fixing $\rho$,
  $\|\rho\|\le\rho_1$,
  \[
  \begin{array}{ccl}
    \GG_\rho(\gamma_1) - \GG_\rho(\gamma_2)
    & = &
    \SS \cdot
    \left[ Dg_\rho(\xi_0+\gamma_2)\cdot(\cdot)
      - Dg_\rho(\xi_0+\gamma_1)\cdot(\cdot) \right]
    \\[2ex]
    & = & \displaystyle
    \SS \cdot
    \left[
      \int_0^1 D^2g_\rho(\xi_0 + \gamma_1 + s(\gamma_2-\gamma_1)) \, ds
      \cdot (\gamma_2-\gamma_1,\cdot)
    \right]
  \end{array}
  \]
  so that there is a $k<\infty$ such that
  \[
  \| \GG_\rho(\gamma_1) - \GG_\rho(\gamma_2) \| \le k \delta \| \gamma_1 -
  \gamma_2 \|
  \]
  for $\|\gamma_1\|\le\delta$ and $\|\gamma_2\|\le\delta$.  Shrinking $\delta$,
  if necessary, so that $k\delta \le 1/2$, we see that $\GG_\rho$ is a
  contraction with unique fixed point $\gamma_\rho$.

  To see that $\rho \mapsto \gamma_\rho$ is continuous, write
  \[
  \begin{array}{ccl}
    \| \gamma_{\rho_1} - \gamma_{\rho_2} \|
    &  =  &
    \| \GG_{\rho_1}(\gamma_{\rho_1}) - \GG_{\rho_2}(\gamma_{\rho_2}) \| \\
    & \le &
    \| \GG_{\rho_1}(\gamma_{\rho_1}) - \GG_{\rho_1}(\gamma_{\rho_2}) \|
    +
    \| \GG_{\rho_1}(\gamma_{\rho_2}) - \GG_{\rho_2}(\gamma_{\rho_2}) \| \\
    & \le &
    (1/2)\| \gamma_{\rho_1} - \gamma_{\rho_2} \| +
    \| \GG_{\rho_1}(\gamma_{\rho_2}) - \GG_{\rho_2}(\gamma_{\rho_2}) \|
  \end{array}
  \]
  so that
  \[
  \| \gamma_{\rho_1} - \gamma_{\rho_2} \| \le 2 \| \GG_{\rho_1}(\gamma_{\rho_2})
  - \GG_{\rho_2}(\gamma_{\rho_2}) \|
  \]
  showing that $\rho\mapsto\gamma_\rho$ is continuous since
  $\rho\mapsto\GG_\rho(\gamma)$ is continuous (for fixed $\gamma$).

  Differentiability is proven following standard arguments from implicit
  function theorems applied to $\GG_\rho$. See, e.g., the second part of the
  proof of Theorem~4.E in \cite{EZ:95}.
\end{proof}

\subsection{Existence of a feasible lifted trajectory}
With the continuity result of the previous subsection in hands, we can prove our
existence result.
For a general maneuvering system we make the following standing assumption.
\begin{assumption}
  \label{ass:exist_lift_traj}
  Let $\subscr{\yout}{d}(t)$, $t\in[0,T]$, be a given desired output
  curve. There exists a (state-input) trajectory $\subscr{\xi}{d} =
  (\subscr{x}{d}(\cdot),\subscr{u}{d}(\cdot))\in\TT$ on $[0,T]$, such that
  $p(\subscr{x}{d}(t)) = \subscr{\yout}{d}(t)$ for all $t\in[0,T]$.
\end{assumption}

Formally, we state our main result in the next theorem.
\begin{theorem}[Existence of a feasible lifted trajectory]
  \label{thm:exist_feas_traj}
  Let $\subscr{\yout}{d}(t)$, $t\in[0, T]$, be a desired sufficiently
  smooth output curve satisfying Assumption~\ref{ass:exist_lift_traj} and
  $\cXU\subset\real^n\times\real^m$ a compact feasibility region. Then, the
  following holds
  \begin{enumerate}
  \item for a given $\DelBeta>0$, there exist $\rhoFR_0>0$ and ${\EpsBeta}_0>0$
    such that the problem
    \begin{equation}
      \min_{\xi\in\TT} \norm{\xi-\xi_d}{L_2} + \EpsBeta
      b_{\DelBeta,\rhoFR}(\xi), 
      \label{eq:opt_contr_relax}
    \end{equation}
    has an isolated local minimizer for all $0 \leq \rhoFR<\rhoFR_0$ and $0\leq
    \EpsBeta < {\EpsBeta}_0$;
  \item if $\xi^*$ is an isolated local minimizer of the problem in
    \eqref{eq:opt_contr_relax} for given $\rhoFR>0$ and ${\EpsBeta}>0$, then
    there exists $\epsilon_2>0$ such that
    \[
    \norm{\xi^*-\subscr{\xi}{d}}{L_2} \leq \norm{\xi-\subscr{\xi}{d}}{L_2} +
    \epsilon_2
    \]
    for all trajectories $\xi\in\TT$ in a neighborhood of $\xi^*$;
  \item for $\xi^*$ as in (ii), then there exists $\epsilon_3>0$ such that
    \[
    \norm{p(x^*(\cdot))-\subscr{\yout}{d}(\cdot)}{L_2} \leq
    \norm{p(x(\cdot))-\subscr{\yout}{d}(\cdot)}{L_2} + \epsilon_3
    \]
    for all trajectories $\xi=(x(\cdot), u(\cdot))\in\TT$ in a neighborhood of
    $\xi^*\in\TT$.
  \end{enumerate}
\end{theorem}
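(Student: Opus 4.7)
The plan is to handle the three items in order, with essentially all of the real work concentrated in~(i); (ii) and (iii) will follow by elementary continuity arguments.

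For~(i) I would invoke Theorem~\ref{thm:cont_minimizer} with the finite-dimensional parameter $\rho := (\rhoFR, \EpsBeta) \in \real^2_{\geq 0}$. At the nominal value $\rho = 0$ the cost in~\eqref{eq:opt_contr_relax} collapses to $g_0(\xi) = \norm{\xi - \subscr{\xi}{d}}{L_2}$, i.e.\ the weighted quadratic tracking functional. By Assumption~\ref{ass:exist_lift_traj} the trajectory $\subscr{\xi}{d}$ belongs to $\TT$ with $g_0(\subscr{\xi}{d}) = 0$; because the weights $Q, R, P_f$ are positive definite, $\subscr{\xi}{d}$ is the unique global minimizer and the Hessian satisfies $D^2 g_0(\subscr{\xi}{d}) \cdot (\zeta, \zeta) \geq c_0 \norm{\zeta}{L_2}^2$ for some $c_0 > 0$ and every $\zeta \in T_{\subscr{\xi}{d}} \TT$, so the SSC at $\rho = 0$ is automatic. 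Before invoking the theorem I would verify that $g_\rho$ depends smoothly on $\rho$ in a neighborhood of the origin. This rests on clauses~(iii) and~(v) of the definition of the scalable feasibility region, which together guarantee that $\subscr{\xi}{d}$ lies in the interior $\XU_\rhoFR$ for all small $\rhoFR \geq 0$; hence $b_{\DelBeta, \rhoFR}$ is finite and $\CC^2$ in $\xi$ on an $L_\infty$ neighborhood of $\subscr{\xi}{d}$, and smoothness in $\rhoFR$ is supplied by clause~(i). Theorem~\ref{thm:cont_minimizer} then delivers a $\CC^1$ family $\rho \mapsto \xi^*_\rho$ of SSC local minimizers on a neighborhood of $\rho = 0$; choosing $\rhoFR_0, \EpsBeta_0 > 0$ small enough that $[0, \rhoFR_0) \times [0, \EpsBeta_0)$ sits inside that neighborhood yields~(i).

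For~(ii) I would exploit the local minimality of $\xi^*$ directly. Since $\xi^* \in \XU_\rhoFR$ is interior, there is a neighborhood $N$ of $\xi^*$ in $\TT$ on which $b_{\DelBeta, \rhoFR}$ is finite and continuous. For every $\xi \in N$ the local optimality inequality reads
\[
\norm{\xi^* - \subscr{\xi}{d}}{L_2} - \norm{\xi - \subscr{\xi}{d}}{L_2} \leq \EpsBeta \bigl( b_{\DelBeta, \rhoFR}(\xi) - b_{\DelBeta, \rhoFR}(\xi^*) \bigr),
\]
and shrinking $N$ so that the right hand side is bounded by any prescribed $\epsilon_2 > 0$ completes the claim. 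For~(iii) I would observe that the functional $\xi = (x, u) \mapsto \norm{p(x) - \subscr{\yout}{d}}{L_2}$ is Lipschitz continuous on a neighborhood of $\xi^*$ (because $p$ is $\CC^1$ and the weighted norm is a smooth quadratic expression in the trajectory), hence a further shrinking of the neighborhood produces any desired $\epsilon_3$.

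The main obstacle is~(i), and within it the only subtle point is verifying the hypotheses of Theorem~\ref{thm:cont_minimizer} at the nominal parameter. Smooth dependence of $g_\rho$ on $\rho$ is not entirely automatic because the domain of the barrier shrinks as $\rhoFR$ grows; one must invoke the monotonicity and smoothness clauses of the scalable feasibility region definition to guarantee that the interior of $\XU_\rhoFR$ continues to contain $\subscr{\xi}{d}$ for all small $\rhoFR$. Once that is in place, the SSC is trivial because the $\rho = 0$ cost is a positive definite weighted quadratic, and the continuity theorem yields the isolated local minimizer required by~(i). Parts~(ii) and~(iii) are then structural consequences of local optimality, continuity of the barrier, and Lipschitz continuity of the output tracking cost.
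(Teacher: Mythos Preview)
Your argument is correct and, for parts~(i) and~(ii), essentially identical to the paper's: (i) is obtained as a corollary of Theorem~\ref{thm:cont_minimizer} (the paper says only this, whereas you spell out why $\subscr{\xi}{d}$ is an SSC minimizer at $\rho=0$ and why $g_\rho$ is smooth in $\rho$), and (ii) is the same rearranged local-optimality inequality with the barrier difference bounded on a neighborhood.

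For~(iii) you take a shorter route than the paper. The paper partitions the state as $x=(x_1,x_2)$ with $x_1=p(x)$, bounds $\norm{\xi-\subscr{\xi}{d}}{L_2}$ above and below by $\norm{p(x)-\subscr{\yout}{d}}{L_2}+\norm{x_2-\subscr{x}{2d}}{L_2}+\norm{u-\subscr{u}{d}}{L_2}$ up to cross-term constants $c_3,c_4$, feeds in~(ii), and then bounds the non-output terms on the neighborhood. You instead observe directly that $\xi\mapsto\norm{p(x)-\subscr{\yout}{d}}{L_2}$ is Lipschitz near $\xi^*$, which already forces the stated inequality for some $\epsilon_3$. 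Both are valid for the statement as written; your argument is more economical but does not actually use~(ii), while the paper's decomposition makes the dependence of $\epsilon_3$ on $\epsilon_2$ and the weight structure explicit, which is more informative if one later wants to control how small $\epsilon_3$ can be made.
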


\begin{proof}
  Statement~(i) is just a straightforward corollary of
  Theorem~\ref{thm:cont_minimizer}.
  To prove statement~(ii), we observe that from (i) there exists $\delta>0$ such
  that
  \[
  \norm{\xi^*-\xi_d}{L_2} + \EpsBeta b_{\rhoFR}(\xi^*) \leq
  \norm{\xi-\xi_d}{L_2} + \EpsBeta b_{\rhoFR}(\xi)
  \]
  for all $\xi\in\cball{\delta}{\xi^*}$, so that
  \[
  \norm{\xi^*-\xi_d}{L_2} \leq \norm{\xi-\xi_d}{L_2} + \EpsBeta (b_{\rhoFR}(\xi)
  - b_{\rhoFR}(\xi^*) ).
  \]
  Exploiting the structure of $b_{\DelBeta, \rhoFR}$ and using the linearity of
  the integral operator we can write
  \[
  b_{\DelBeta, \rhoFR}(\xi) - b_{\DelBeta, \rhoFR}(\xi^*) = \int_0^T \sum_{j}
  \beta(-c_j(x(\tau), u(\tau))) - \beta(-c_j(x^*(\tau), u^*(\tau))) d\tau
  \]
  Using the fact that $\beta$ is a $\CC^2$ function and each $c_j$ is $\CC^2$ in
  both arguments (so that they are all bounded on $[0,T]$), there exists $c_2
  >0$ such that
  \[
  \begin{split}
    b_{\DelBeta, \rhoFR}(\xi) - b_{\DelBeta, \rhoFR}(\xi^*) &\leq T
    \max_{\tau\in[0,T]} \sum_{j} \beta(-c_j(x(\tau), u(\tau))) -
    \beta(-c_j(x^*(\tau), u^*(\tau))) \leq T c_2.
  \end{split}
  \]
  The result follows by choosing $\epsilon_2 = \EpsBeta T c_2$.

  To prove statement~(ii) we assume, without loss of generality, that the state
  vector can be written as $x = [x_1^T \; x_2^T]^T$, where $x_1\in\real^p$ is
  the vector of performance outputs, that is $p(x) = x_1$, and
  $x_2\in\real^{n-p}$ the remaining portion of the state. We use the same
  partition for any (state-input) curve so that, given a desired curve
  $\xi_d\in\Xtilde$, a weigthed $L_2$ distance of a curve $\xi\in\Xtilde$ from a
  lifted trajectory $\xi_d\in\TT$ satisfies
  \begin{equation}
    \begin{split}
      \norm{\xi-\xi_d}{L_2} \leq &\half \int_0^T \norm{x_1(\tau) -
        \subscr{x}{1d}(\tau)}{Q_1}^2 +
      \norm{x_2(\tau) - \subscr{x}{2d}(\tau)}{Q_2}^2 + \norm{u(\tau) -
        \subscr{u}{d}(\tau)}{R}^2 \; d\tau\\[1.2ex]
      & + \half \norm{x_1(T) - \subscr{x}{1d}(T)}{P_1}^2 + \half \norm{x_2(T) -
        \subscr{x}{2d}(T)}{P_2}^2 + c_3
    \end{split}
    \label{eq:L2_dist_x1x2}
  \end{equation}
  where $Q_1$, $Q_2$, $R$, $P_1$ and $P_2$ are positive definite matrices and
  $c_3>0$ is a positive constant taking into account cross terms.
  %
  %
  Now, rearranging terms in \eqref{eq:L2_dist_x1x2}, we have
  \begin{equation*}
    \begin{split}
      \norm{\xi-\xi_d}{L_2} \leq&\; \half \int_0^T \norm{x_1(\tau) -
        \subscr{x}{1d}(\tau)}{Q_1}^2 d\tau + \half \norm{x_1(T) -
        \subscr{x}{1d}(T)}{P_1}^2 +\\
      & \half \int_0^T \norm{x_2(\tau) - \subscr{x}{2d}(\tau)}{Q_2}^2 + \half
      \norm{x_2(T) - \subscr{x}{2d}(T)}{P_2}^2 + \half \int_0^T \norm{u(\tau) -
        \subscr{u}{d}(\tau)}{R}^2 \; d\tau + c_3\\[1.2ex]
      =&\; \norm{p(x(\cdot)) - \subscr{\yout}{d}(\cdot)}{L_2} + \norm{x_2(\cdot)
        - \subscr{x}{2d}(\cdot)}{L_2} + \norm{u(\cdot) -
        \subscr{u}{d}(\cdot)}{L_2} + c_3,
    \end{split}
  \end{equation*}
  where we have used the fact that $\subscr{\xi}{d}$ is a lifted trajectory (and
  thus $\subscr{x}{1d}(\cdot) := p(\subscr{x}{d}(\cdot)) =
  \subscr{\yout}{d}(\cdot)$) and supposed that the weighted $L_2$ norm for the
  input has no terminal penalty. 
  Using similar arguments, the converse inequality can be obtained, i.e.,
  $\norm{\xi-\xi_d}{L_2} \geq \norm{p(x(\cdot)) - \subscr{\yout}{d}(\cdot)}{L_2}
  + \norm{x_2(\cdot) - \subscr{x}{2d}(\cdot)}{L_2} + \norm{u(\cdot) -
    \subscr{u}{d}(\cdot)}{L_2} - c_4$,
  for a suitable $c_4>0$. It is worth noting that, if the weight matrices $Q$,
  $R$ and $P_f$ are block diagonal the above inequalities are equalities with
  $c_3 = c_4 = 0$.
	
  From (ii), we have that there exist $\xi^*\in\TT$ and $\epsilon_2>0$ such that
  $\norm{\xi^*-\xi_d}{L_2} \leq \norm{\xi-\xi_d}{L_2} + \epsilon_2$ for all
  $\xi\in\TT$ in a neighborhood of $\xi^*$. Thus, we can write
  \begin{equation*}
    \begin{split}
      \norm{p(x^*(\cdot)) - \subscr{\yout}{d}(\cdot)}{L_2} + \norm{x_2^*(\cdot)
        - \subscr{x}{2d}(\cdot)}{L_2} &+ \norm{u^*(\cdot) -
        \subscr{u}{d}(\cdot)}{L_2} \leq\\[1.2ex]
      &\norm{p(x(\cdot)) - \subscr{\yout}{d}(\cdot)}{L_2} + \norm{x_2(\cdot) -
        \subscr{x}{2d}(\cdot)}{L_2} + \norm{u(\cdot) -
        \subscr{u}{d}(\cdot)}{L_2} + \epsilon_2 + c_3 + c_4,
    \end{split}
  \end{equation*}
  and,
  \begin{equation*}
    \begin{split}
      \norm{p(x^*(\cdot)) - \subscr{\yout}{d}(\cdot)}{L_2} \leq & \;
      \norm{p(x(\cdot)) - \subscr{\yout}{d}(\cdot)}{L_2} + \\[1.2ex]
      & \!\!( \norm{x_2(\cdot) - \subscr{x}{2d}(\cdot)}{L_2} -
      \norm{x_2^*(\cdot) - \subscr{x}{2d}(\cdot)}{L_2} + \norm{u(\cdot) -
        \subscr{u}{d}(\cdot)}{L_2} - \norm{u^*(\cdot) -
        \subscr{u}{d}(\cdot)}{L_2} + \epsilon_2 + c_3 + c_4).
    \end{split}
  \end{equation*}
  Using the same arguments on boundedness of the state and input trajectories
  and boundedness of the weighted $L_2$ norm on a neighborhood of $\xi^*$ as in
  (ii), we have that there exists $\epsilon_3>0$ such that
  $\norm{p(x^*(\cdot)) - \subscr{\yout}{d}(\cdot)}{L_2} \leq \norm{p(x(\cdot)) -
    \subscr{\yout}{d}(\cdot)}{L_2} + \epsilon_3$,
  thus concluding the proof.
\end{proof}


\begin{remark}[Analysis of the lift and constraint strategy for the PVTOL]
  The initialization part of the lift and constrain strategy for the PVTOL can
  be easily performed. Indeed, the $\lift_0$ procedure simply implements
  equations in \eqref{eq:lift_pvtol0}.
  Then we proceed by analyzing each step. As for Step~1, we can use
  Theorem~\ref{thm:PVTOL_uncstr_lift} to prove that there exists $\EpsVTOLo>0$
  such that for any $\EpsVTOL<\EpsVTOLo$ an unconstrained lifted trajectory for
  the coupled PVTOL exists and depends continuously on the coupling
  parameter. Thus, we know that for ``small'' positive values of the coupling
  parameter the continuation procedure will be successful, thus providing an
  unconstrained desired trajectory for the following continuation steps. Using
  Theorem~\ref{thm:exist_feas_traj}~(i), we have that for feasibility regions
  that are not too tight Step~2 and Step~3 will be successful and, thus, a
  feasible trajectory that locally solves the practical feasible lifting task
  can be found.
\end{remark}

\section{Numerical computations on the PVTOL}
\label{sec:numerical_comp}
In this section we present numerical computations showing the effectiveness of
the proposed strategy on the PVTOL.
We proceed defining the feasibility region (equivalently the point-wise
constraints that the system trajectories must satisfy) and the desired maneuver.
Regarding the feasibility region, we consider the case of constraining
the input only, i.e. $u_1$ and $u_2$ are bounded with, in particular, $u_1$
strictly positive. In order to have a compact feasibility region we just assume
that the states must belong to a sufficiently large compact and simply connected
set $\Omega\subset\real^6$ such that the state portion of the trajectory is for
sure feasible. The feasibility set is thus defined as
\[
\cXU = \{(x,u)\in\Omega\times\real^2 | 0 < u_{1 \text{min}} \leq u_1 \leq u_{1
  \text{max}}, u_{2 \text{min}} \leq u_2 \leq u_{2 \text{max}} \; \text{for
  given} \; u_{1 \text{min}}, u_{1 \text{max}}, u_{2 \text{min}} \; \text{and}
\; u_{2 \text{max}}\}.
\]
Then, we parametrize the feasibility region with the scaling parameter $\rhoFR$
and define the inequalities determining the barrier functional. We pose the
inequalities in terms of the square distance of $u_1$ and $u_2$ from the
boundary values in order to have a smooth barrier functional. The two
inequalities are as follows
\[
\begin{split}
  \left[ u_1 - \frac{(u_{1 \text{max}} + u_{1 \text{min}})}{ 2 } \right]^2 &\leq
  \left[ (\rhoFR + (1-\rhoFR) k_1) \frac{(u_{1 \text{max}} - u_{1 \text{min}})
    }{ 2 }\right]^2\\
  \left[ u_2 - \frac{(u_{2 \text{max}} + u_{2 \text{min}})}{ 2 } \right]^2 &\leq
  \left[ (\rhoFR + (1-\rhoFR) k_2) \frac{(u_{2 \text{max}} - u_{2 \text{min}})
    }{ 2 }\right]^2
\end{split}
\]
where we set $u_{1 \text{min}} = 0.5 g$ ($g$ being the gravity constant), $u_{1
  \text{max}} = 1.5 g$ m/s$^2$, $u_{2 \text{min}} = -80$ deg/s$^2$, $u_{2
  \text{max}} = 80$ deg/s$^2$, while $k_1$ and $k_2$ are two scaling factors
that guarantee feasibility for $\rhoFR = 0$.

Regarding the desired maneuver, we aim at performing a barrel roll with a
constant velocity profile. Specifically, we choose the desired outputs,
$\subscr{y}{d}(\cdot)$ and $\subscr{z}{d}(\cdot)$, so that the desired path is
the one depicted in Figure~\ref{fig:path_unconstr} and the velocity is $v_d = 10
m/s$.

We are now ready to present the results of the main steps of the lift and
constrain strategy for the described feasibility region and desired maneuver. A
lifted trajectory for the decoupled PVTOL ($\EpsVTOL = 0$) is computed according
to Equations~\eqref{eq:lift_pvtol0}. Recall that this trajectory is a
quasi-static trajectory for any coupled PVTOL. Then, using the dynamic embedding
technique described in Section~\ref{sec:explor_strategy}, we compute a lifted
trajectory for the coupled PVTOL with nominal parameter $\EpsVTOL = 1$. We solve
the optimal control problem by using the projection operator Newton method.
The quasi-static versus lifted path is depicted in
Figure~\ref{fig:path_roll_rollrate_unconstr} together with the velocity and roll
trajectories.
\begin{figure}[htbp]
  \center \subfloat[path]{\label{fig:path_unconstr}
    \includegraphics[width = 0.32\textwidth]{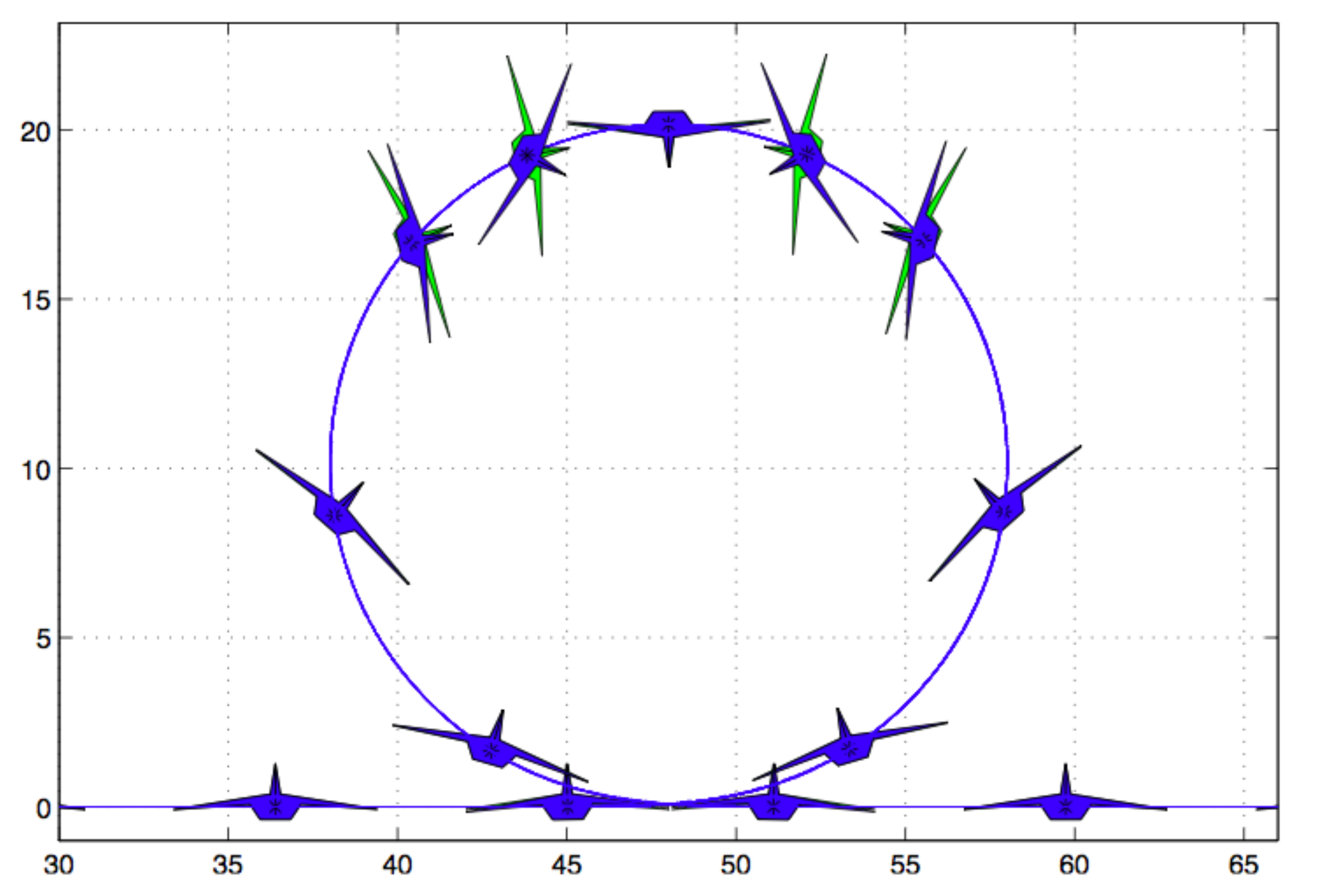}}\;
  \subfloat[velocity]{\label{fig:roll_unconstr}
    \includegraphics[width = 0.286\textwidth]{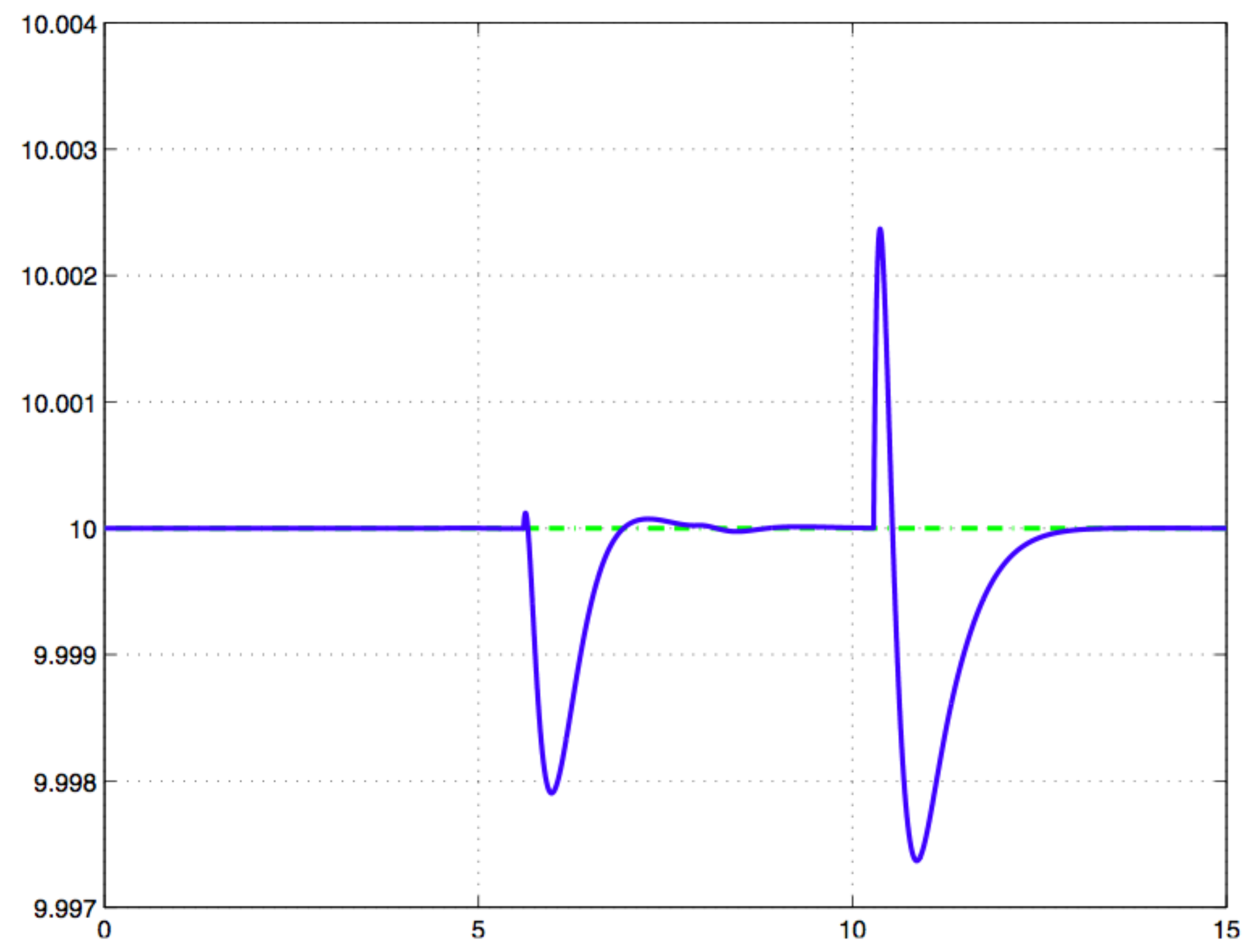}}\;
  \subfloat[roll]{\label{fig:roll-rate_unconstr}
    \includegraphics[width = 0.28\textwidth]{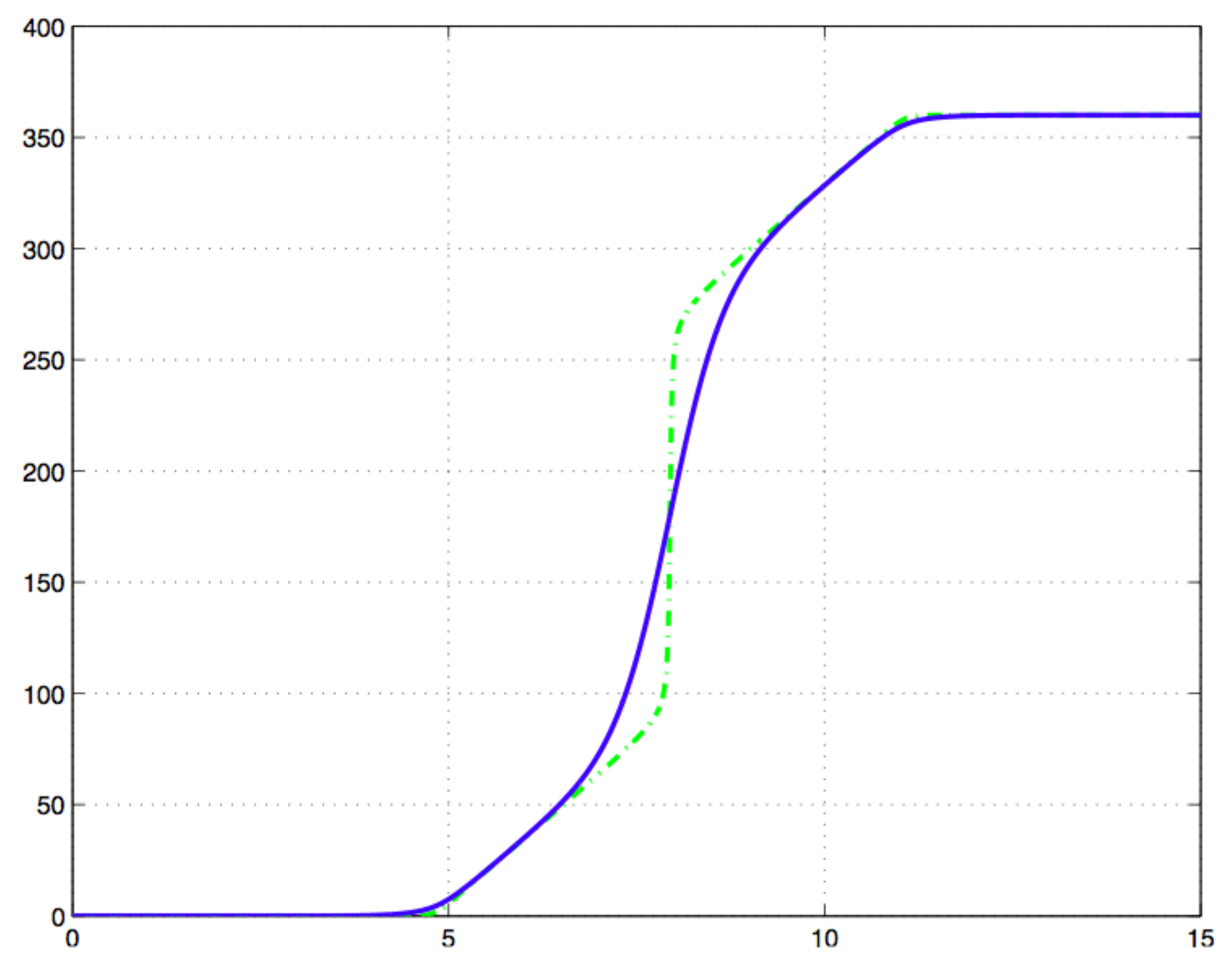}}
  \caption{Quasi-static (decoupled PVTOL) vs lifted (coupled PVTOL) path,
    velocity and roll trajectories for the unconstrained PVTOL ($\EpsVTOL =
    1$). Specifically: the dashed green lines are the quasi-static curves, while
    the solid blue are the lifted trajectories.}
  \label{fig:path_roll_rollrate_unconstr}
\end{figure}
The decoupled versus coupled roll-rate and input trajectories are depicted in
Figure~\ref{fig:u1_u2_unconstr}.
\begin{figure}[htbp]
  \center \subfloat[roll-rate]{\label{fig:roll_rate_unconstr}\includegraphics[width =
    0.298\textwidth]{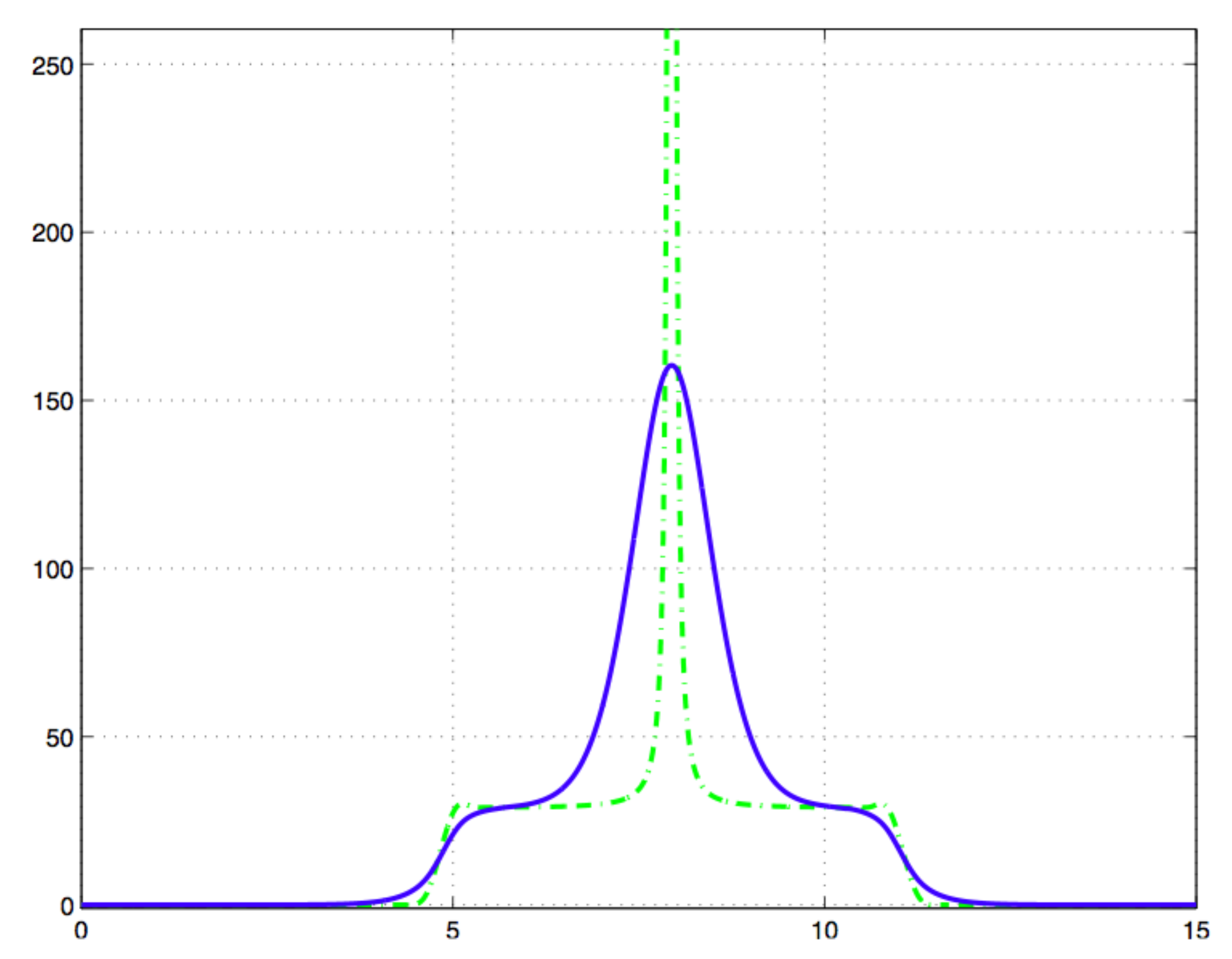}}\;
  \subfloat[$u_1$]{\label{fig:U1_unconstr}\includegraphics[width =
    0.296\textwidth]{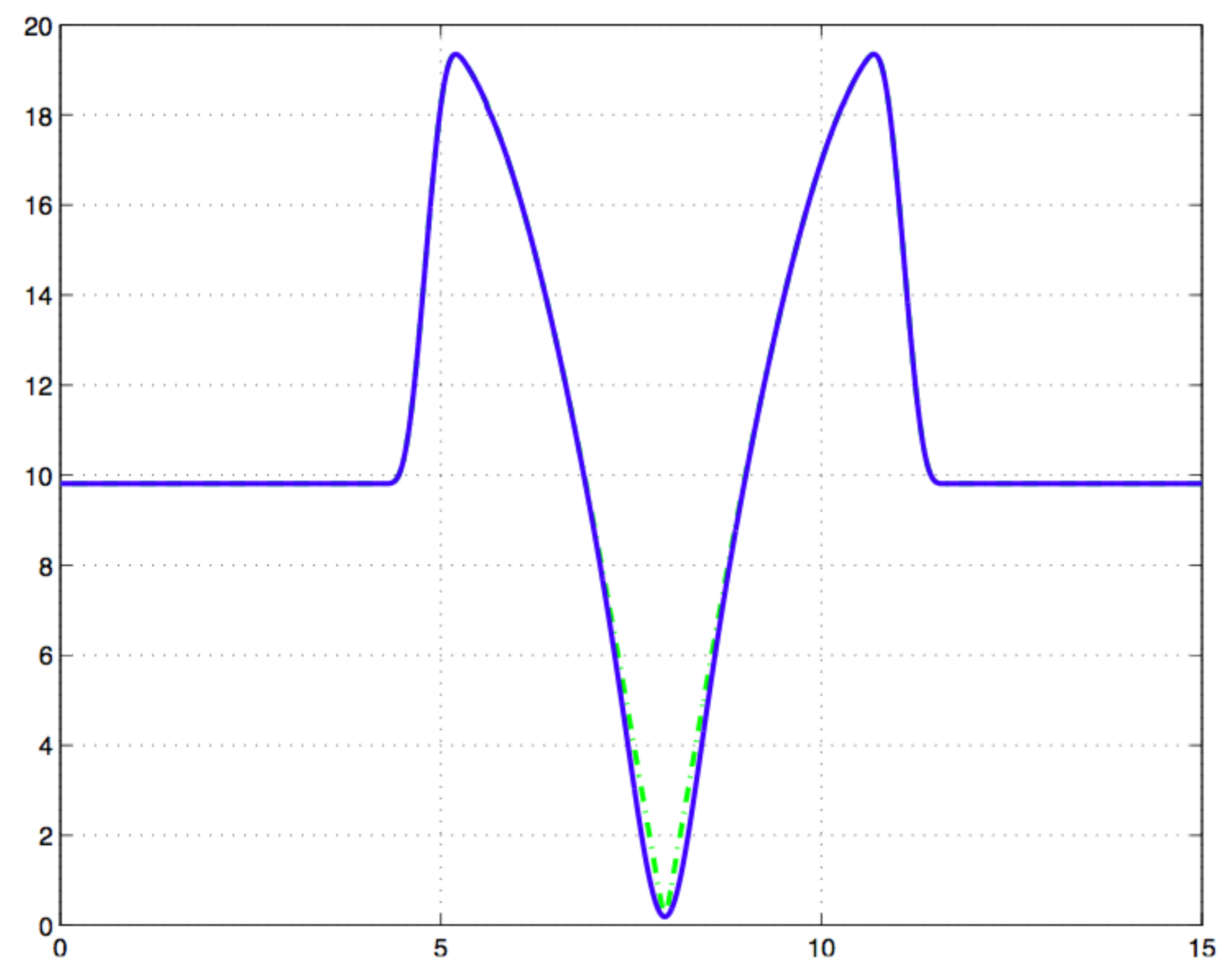}}\;
  \subfloat[$u_2$]{\label{fig:U2_unconstr}\includegraphics[width =
    0.305\textwidth]{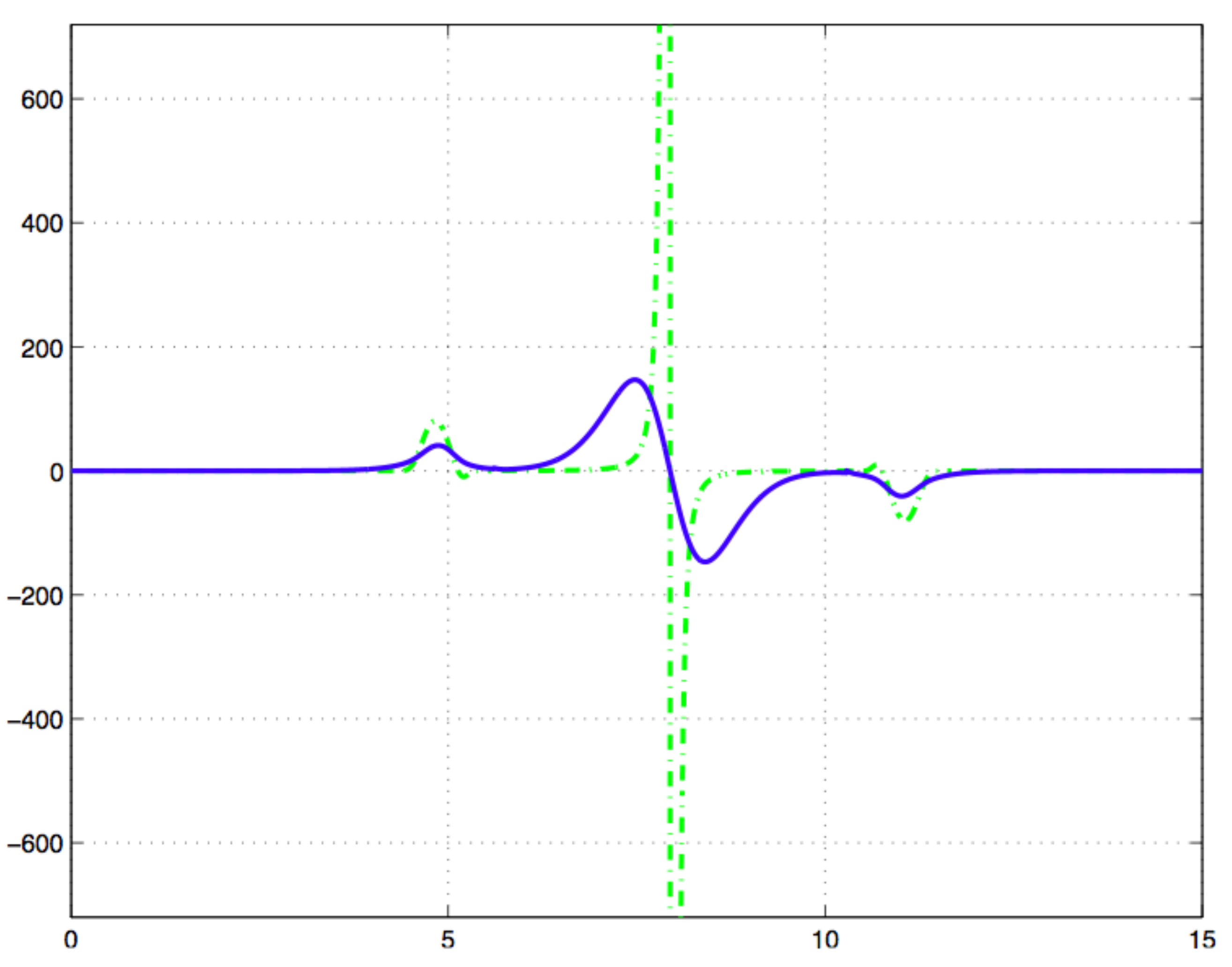}}\;
  \caption{Quasi-static (decoupled PVTOL) vs lifted (coupled PVTOL) roll rate and input
    trajectories for the unconstrained PVTOL ($\EpsVTOL = 1$). Specifically:
    the dashed green lines are the quasi-static curves, while the solid blue are
    the lifted trajectories. 
  }
  \label{fig:u1_u2_unconstr}
\end{figure}
As appears from the picture, the desired and lifted paths and velocities are
indistinguishable (the maximum error is of order $10^{-3}$ consistent with the
required absolute tolerance). This is consistent with the result in
Theorem~\ref{thm:PVTOL_uncstr_lift} stating that the lifting task can be solved
exactly for some positive values of $\EpsVTOL$. As expected the roll and roll
rate trajectories and the input trajectories are significantly different from
the quasi-static ones.
In particular, the optimal roll and roll rate trajectories display a degree of
anticipation and are smoother than the quasi-static. This is due to the
filtering action of the dynamics. This filtering effect can be seen also in the
snapshots of the PVTOL animation in Figure~\ref{fig:path_unconstr}.
It is worth noting that neither the desired nor the lifted trajectory satisfy
the constraints and, thus, are both infeasible.

Now, we are ready to show the results of the ``constrain'' part of the strategy.
As regards the $L_2$ weights, we choose diagonal $Q$ and $R$ matrices (for
simplicity) and penalize the outputs (external position states) $10^4$ times the
other states and the inputs.
We initialize the strategy by setting $\EpsBeta = 10$. We choose a relatively
high value for $\EpsBeta$ so that for each given value of $\rhoFR$ the
trajectory that we find is sufficiently far from the boundary. This has two
advantages. First, when we increase $\rhoFR$, thus shrinking the feasibility
region, the constraints are only slightly violated if the step-size on $\rhoFR$
is not too large. Second, once $\rhoFR = 1$ has been reached, we can converge to
a tighter approximation ($\EpsBeta \rightarrow 0$) in an interior point fashion.
The parameter $\rhoFR$ is varied with a step-size of $0.2$. For each value of
$\rhoFR$ the minimization takes few (less than $5$) Newton iterations (with an
absolute tolerance on the descent direction set to $10^{-6}$). Once
reached the value $\rhoFR=1$, $\EpsBeta$ is decreased down to $\EpsBeta = 0.1$.

The desired versus feasible path, velocity and roll trajectories are depicted in
Figure~\ref{fig:path_vel_roll} (for $\rhoFR = 1$ and $\EpsBeta =
0.1$). Snapshots of the PVTOL animation are shown Figure~\ref{fig:Path} as for
the unconstrained case. For the velocity and roll angle we also plot
intermediate non-optimal trajectories obtained during the continuation
procedure. In particular we plot the trajectories obtained for $\rhoFR = 0.6$
and $\rhoFR = 1$ with $\EpsBeta = 10$.
\begin{figure}[htbp]
  \center \subfloat[Path]{\label{fig:Path}
    \includegraphics[width = 0.31\textwidth]{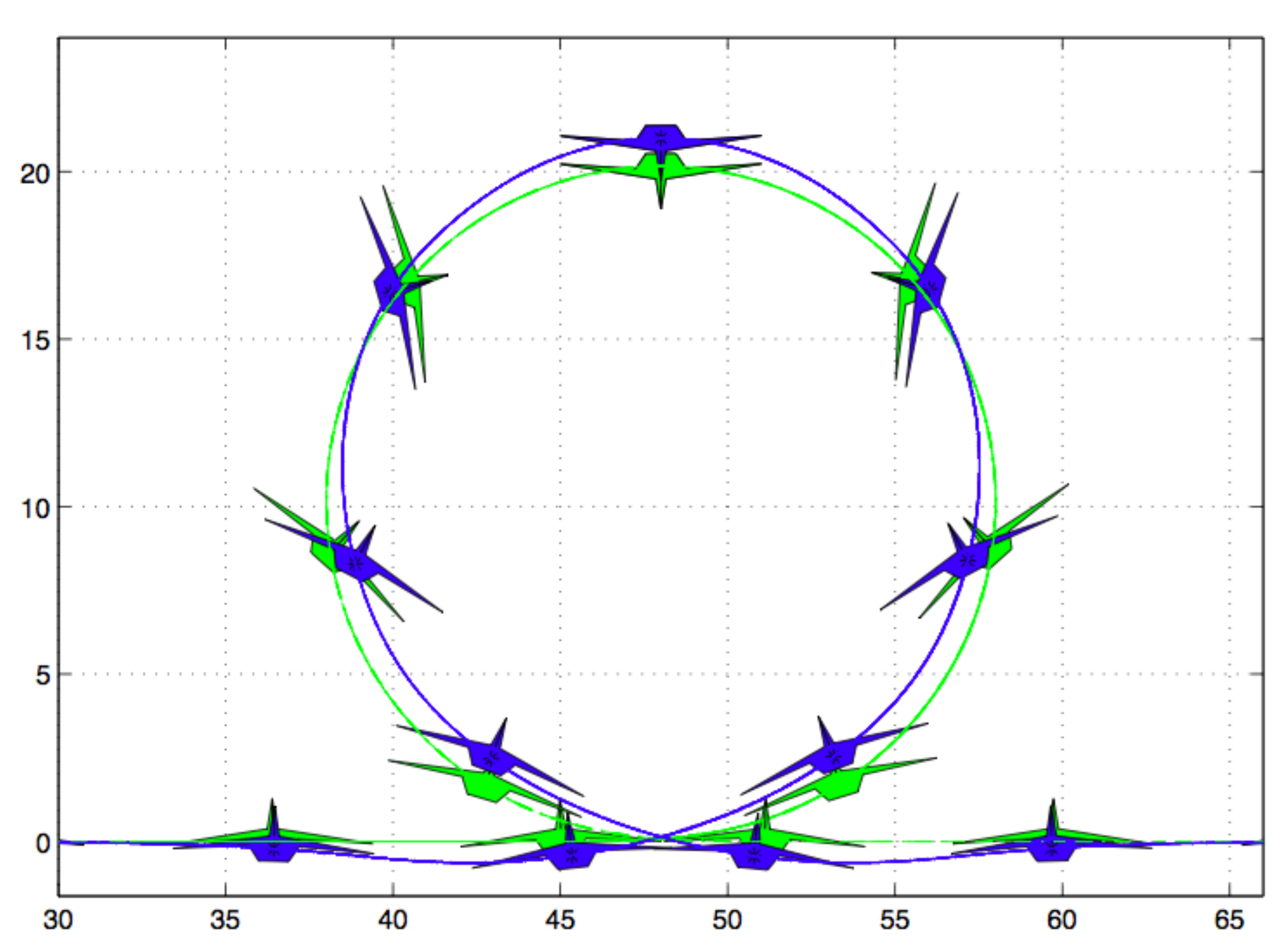}}\;
 \subfloat[Velocity]{\label{fig:Vel}
    \includegraphics[width = 0.29\textwidth]{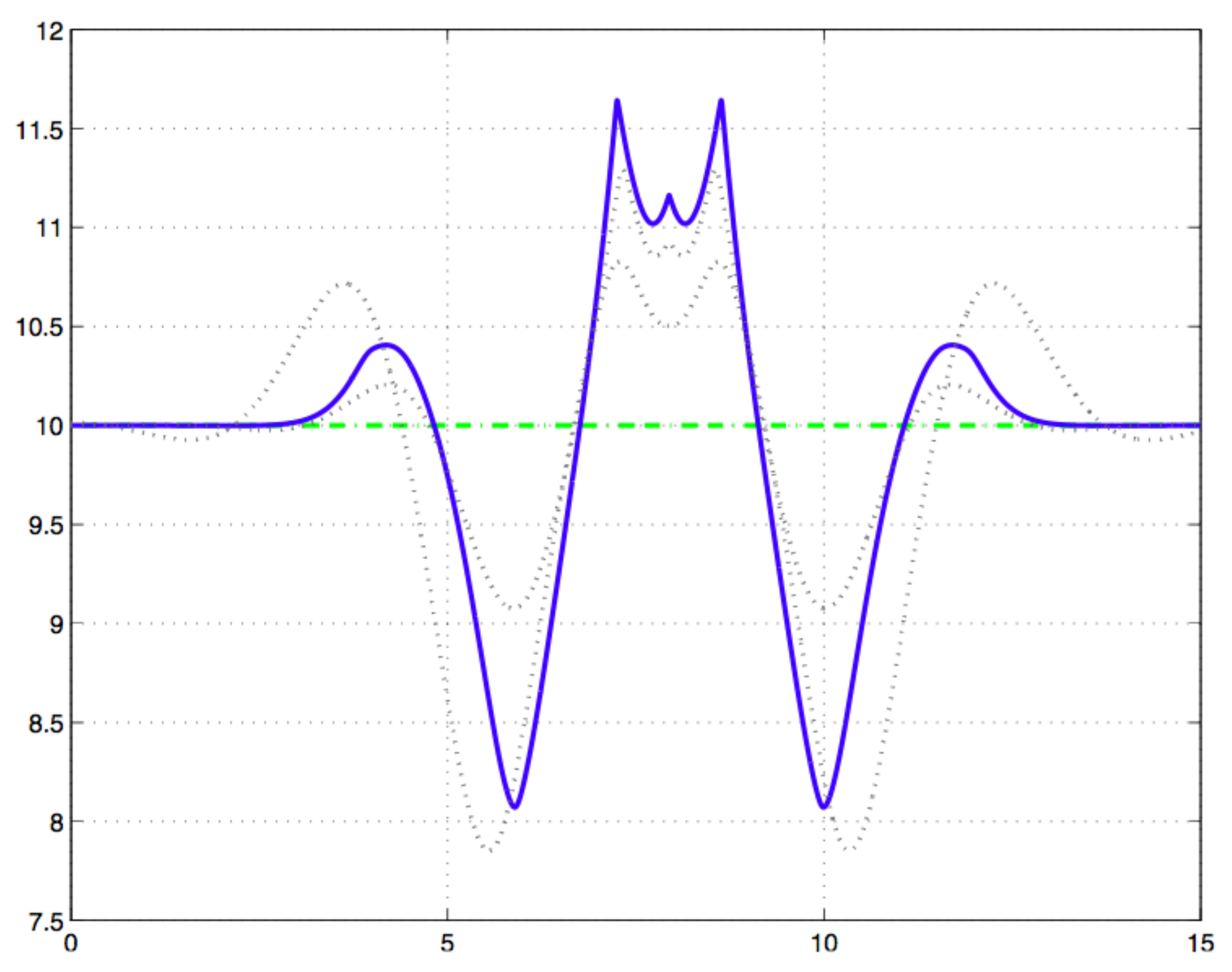}}\;
  \subfloat[Roll]{\label{fig:Roll}
    \includegraphics[width = 0.29\textwidth]{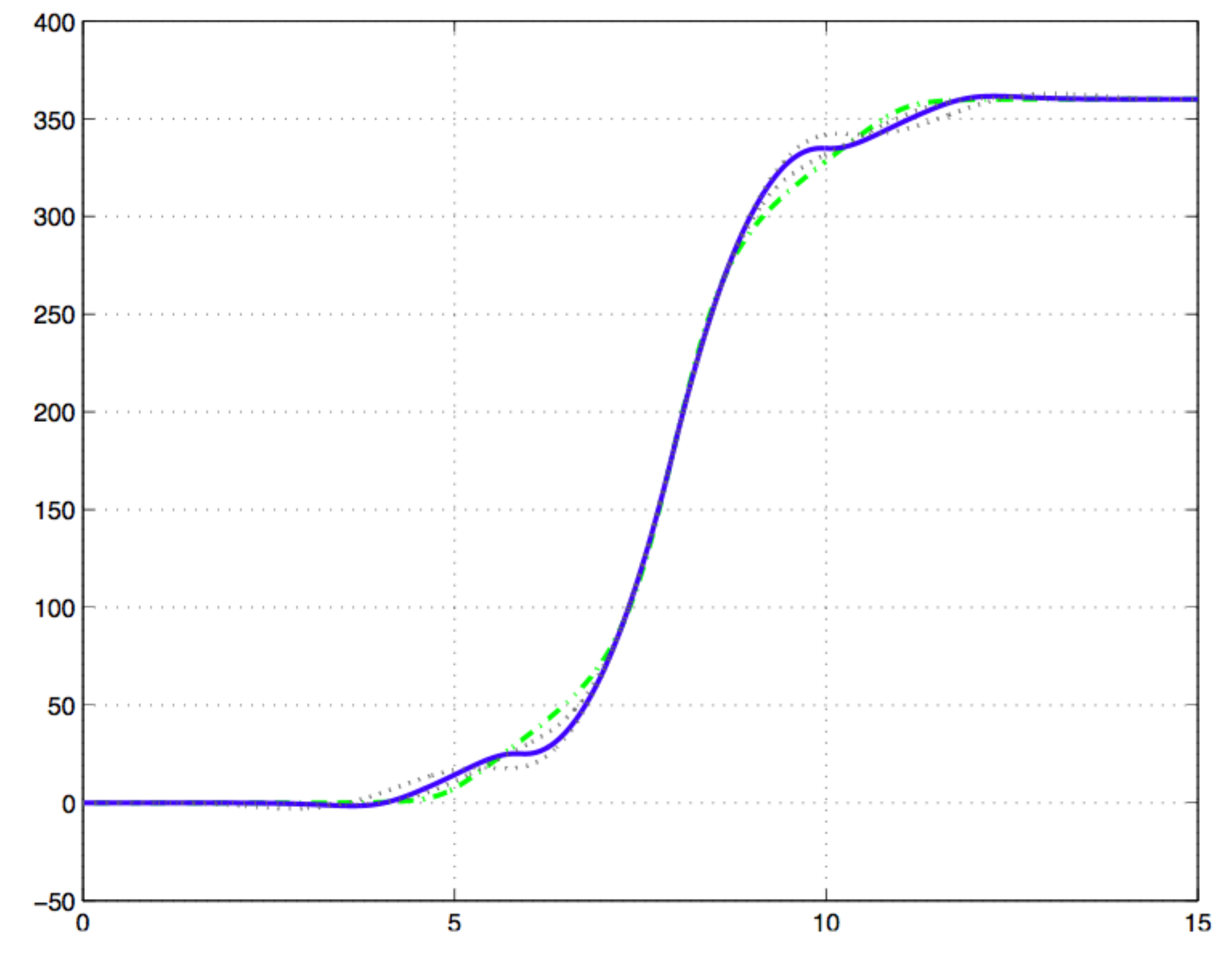}}
  \caption{Desired vs feasible path, velocity and roll angle for the coupled
    PVTOL ($\EpsVTOL=1$). Specifically: the dashed green lines are the desired
    infeasible curves, the solid blue lines are the feasible optimal
    trajectories ($\rhoFR = 1$ and $\EpsBeta = 0.1$), and (for velocity and roll
    angle) the thinner dotted grey lines are intermediate trajectories obtained
    for $\rhoFR = 0.6$ and $\rhoFR = 1$ with $\EpsBeta = 10$ respectively.}
  \label{fig:path_vel_roll}
\end{figure}
It is interesting to notice that even in presence of tight constraints on
the two inputs the feasible trajectory output (path and velocity) is reasonably
close to the desired one. The maximum error on $y(\cdot)$ and $z(\cdot)$ is
less than $1$m and the maximum error on the velocity is less than
$2$m/s. Also, it is worth noting that the roll trajectory,
Figure~\ref{fig:Roll}, stays bounded and relatively close to the desired one
even if the weight in the cost function is much lower than the one on the
positions.

In Figure~\ref{fig:roll-rate_u1_u2} we show the feasible versus desired
roll-rate and inputs for the same values of $\rhoFR$ and $\EpsBeta$. The
controls have a bang-bang like behavior. In particular they tend to assume the
boundary value in a larger interval than the one on which the constraints are
violated in order to compensate the missing control effort in the infeasible
time windows. This non-causal behavior shows that, in order to obtain
performances that are comparable with the unconstrained case the optimization
needs to work in a non-causal fashion.  
\vspace{-3ex}
\begin{figure}[htbp]
  \center \subfloat[roll-rate]{\label{fig:roll_rate_cstr}
    \includegraphics[width = 0.299\textwidth]{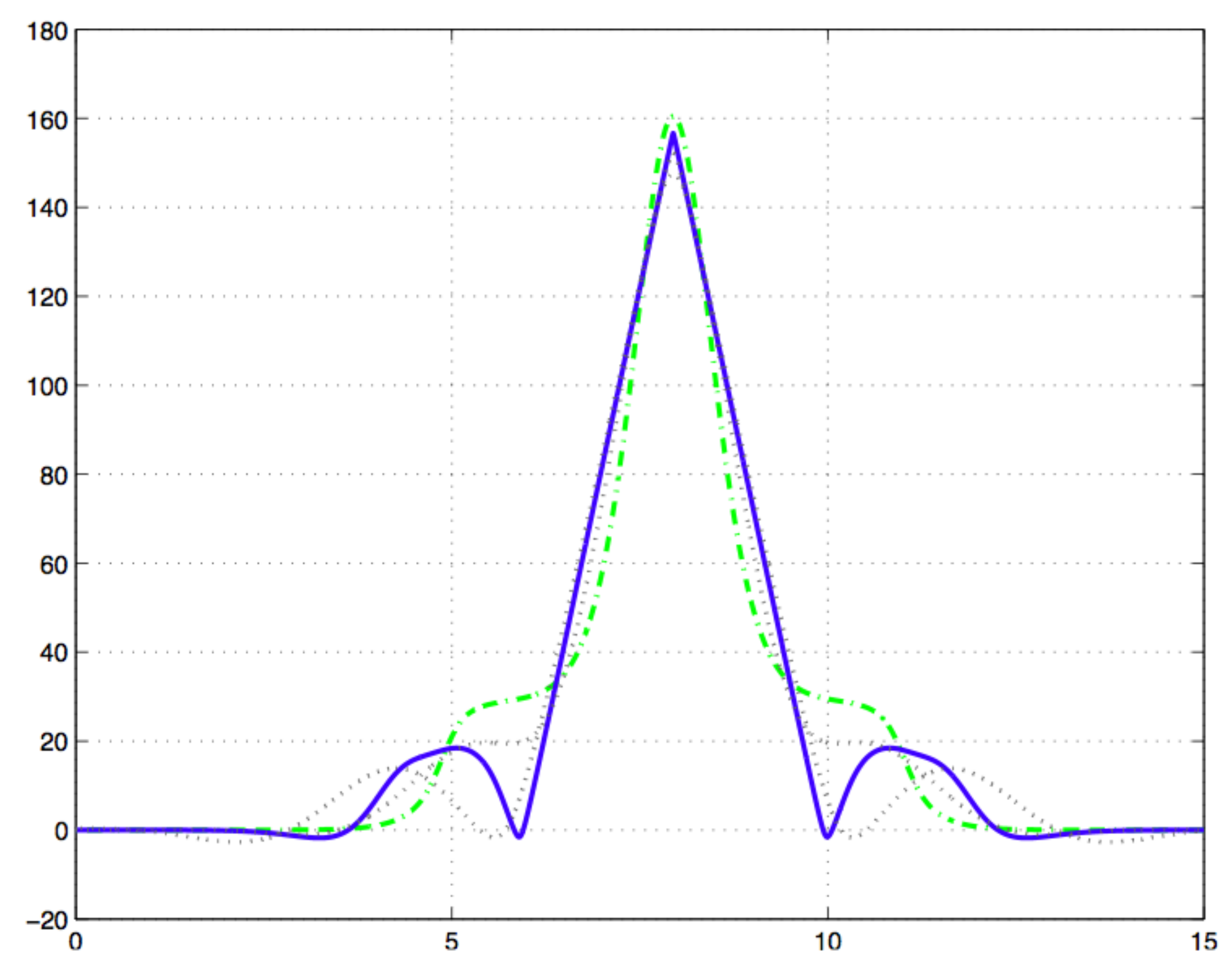}}\;
  \subfloat[$u_1$]{\label{fig:U1_cstr}
    \includegraphics[width = 0.299\textwidth]{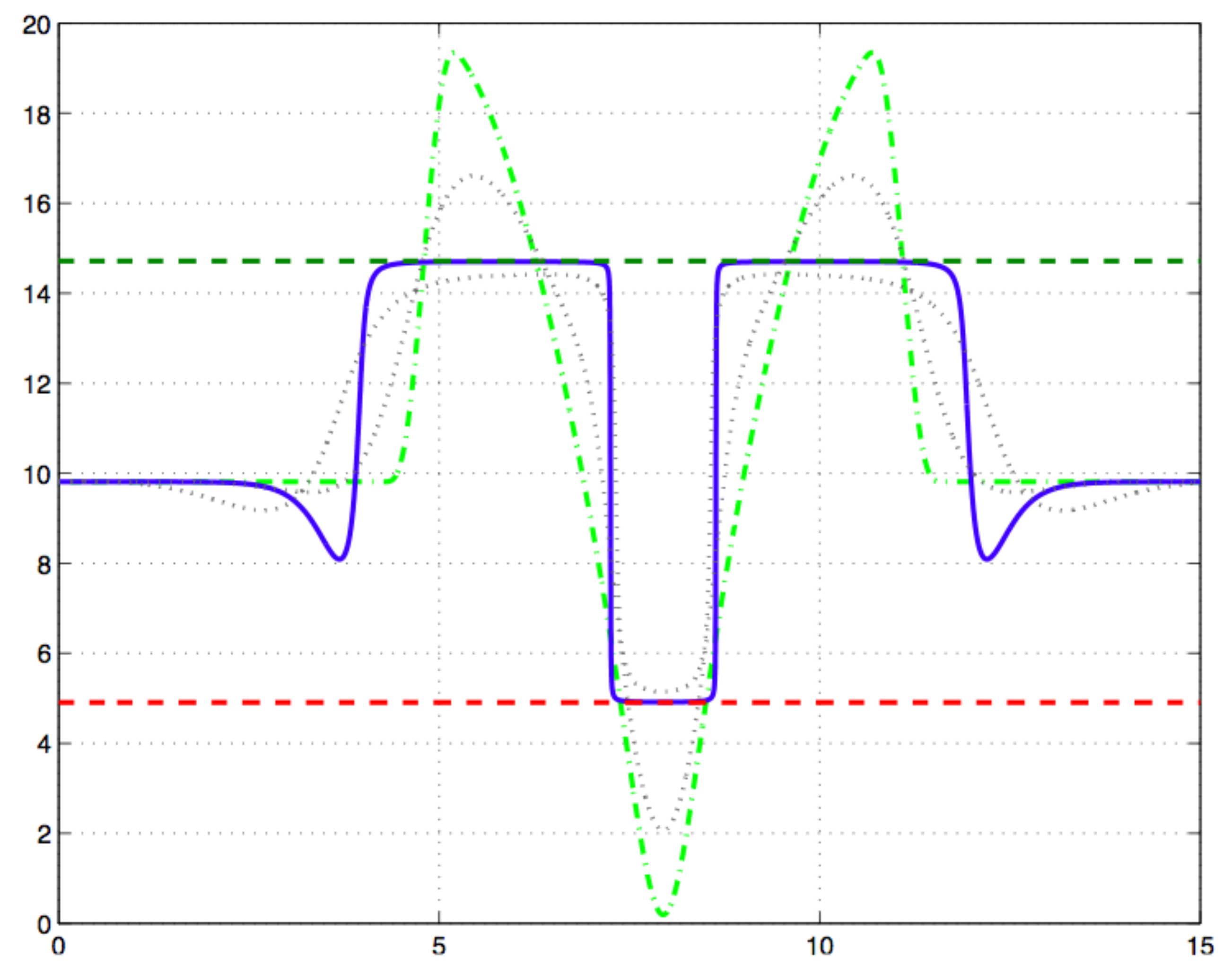}}\;
  \subfloat[$u_2$]{\label{fig:U2_zoom}
    \includegraphics[width = 0.299\textwidth]{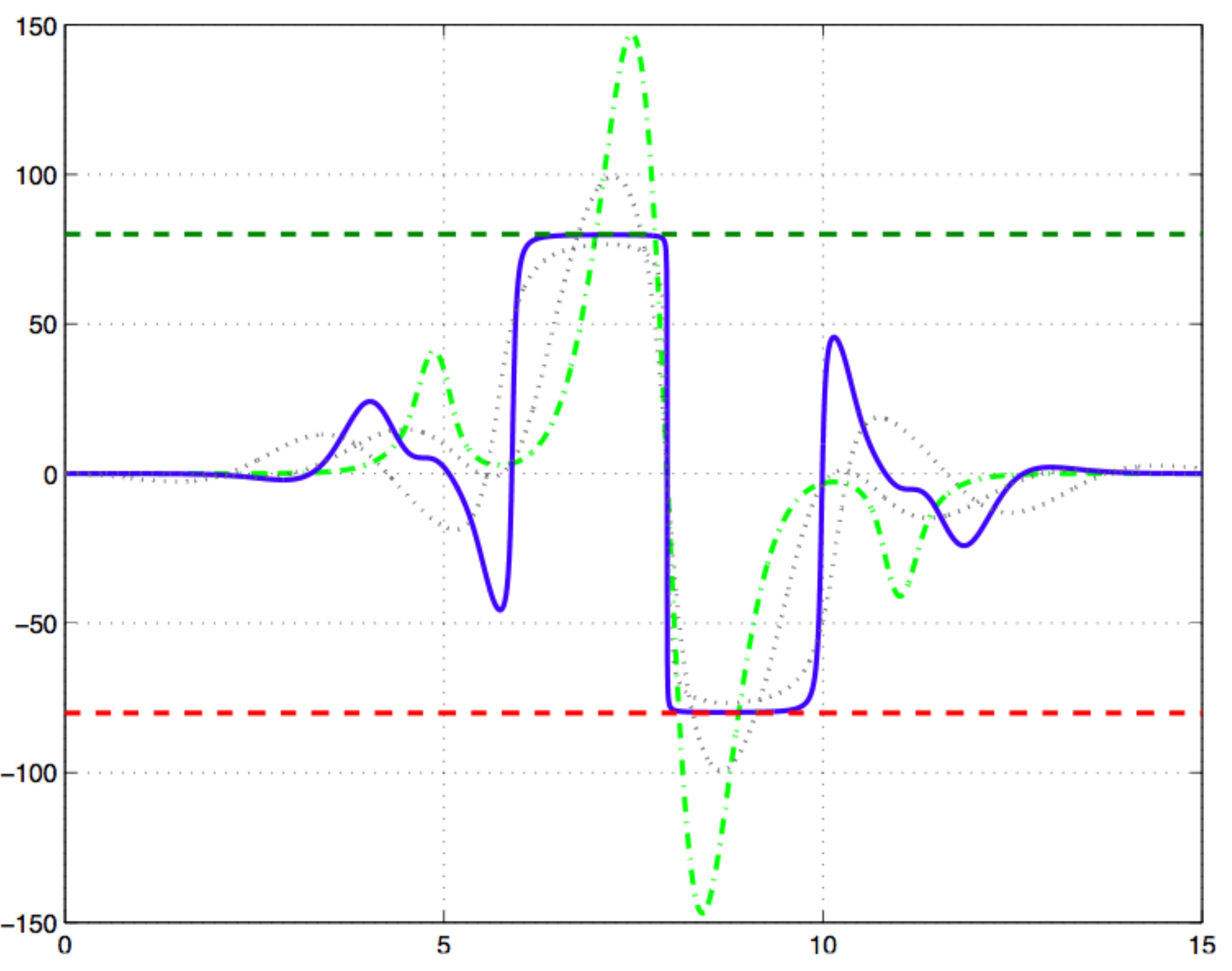}}\;
  \caption{Desired vs feasible roll-rate, $u_1$ and $u_2$ for
    the coupled PVTOL ($\EpsVTOL=1$). Specifically: the dashed green lines are
    the desired infeasible curves, the solid blue lines are the optimal
    trajectories ($\rhoFR = 1$ and $\EpsBeta = 0.1$), and the thinner dotted grey lines
    are intermediate trajectories(obtained for $\rhoFR = 0.6$ and $\rhoFR
    = 1$ with $\EpsBeta = 10$ respectively.}
  \label{fig:roll-rate_u1_u2}
\end{figure}


The results obtained in the above computations show that the inputs tend to be
discontinuous when $\EpsBeta \rightarrow 0$ thus suggesting that the constrained
minimizer is, in fact, discontinuous. In order to have a smoother input we could
vary the parameters of the strategy. We could, e.g., increase the parameter
$\EpsBeta$ thus obtaining a smoother trajectory (as the intermediate feasible
trajectory shown in Figure~\ref{fig:roll-rate_u1_u2} in dotted grey line) or
increase the $L_2$ input weights (the coefficients of the matrix $R$). Both the
two procedures increase the error on the desired output. Next, we show a
different procedure to obtain the same input regularization without loosing too
much in terms of output error. This procedure allows us to show that the
proposed strategy can easily deal with both state and input constraints without
any increase in the complexity of the strategy. We proceed by applying a dynamic
extension to the system, \cite{AI:95}. In the extended system the actual inputs are
two additional states, while their derivatives are the new inputs. 
\begin{equation*}
  \begin{split}
    \dot{x} &= f(x,w)\\
    \dot{w} &= v,
  \end{split}
\end{equation*} 
where $x = [y \,z \,\varphi \,\dot{y} \,\dot{z} \,\dot{\varphi}]$ and $w = [u_1
\, u_2]$ are the states of the extended system and $v = [\dot{u}_1 \,\dot{u}_2]$
is the input.

The results are shown in Figure~\ref{fig:trajs_extend}. We compare the feasible
path and original inputs ($u_1$ and $u_2$) obtained with and without dynamic
extension. In particular, for the extended system both the original inputs
(additional states) and the original input derivatives (new inputs) are
constrained.
\vspace{-3ex}
\begin{figure}[htbp]
  \center \subfloat[Path]{\label{fig:Path_extend}
    \includegraphics[width = 0.299999\textwidth]{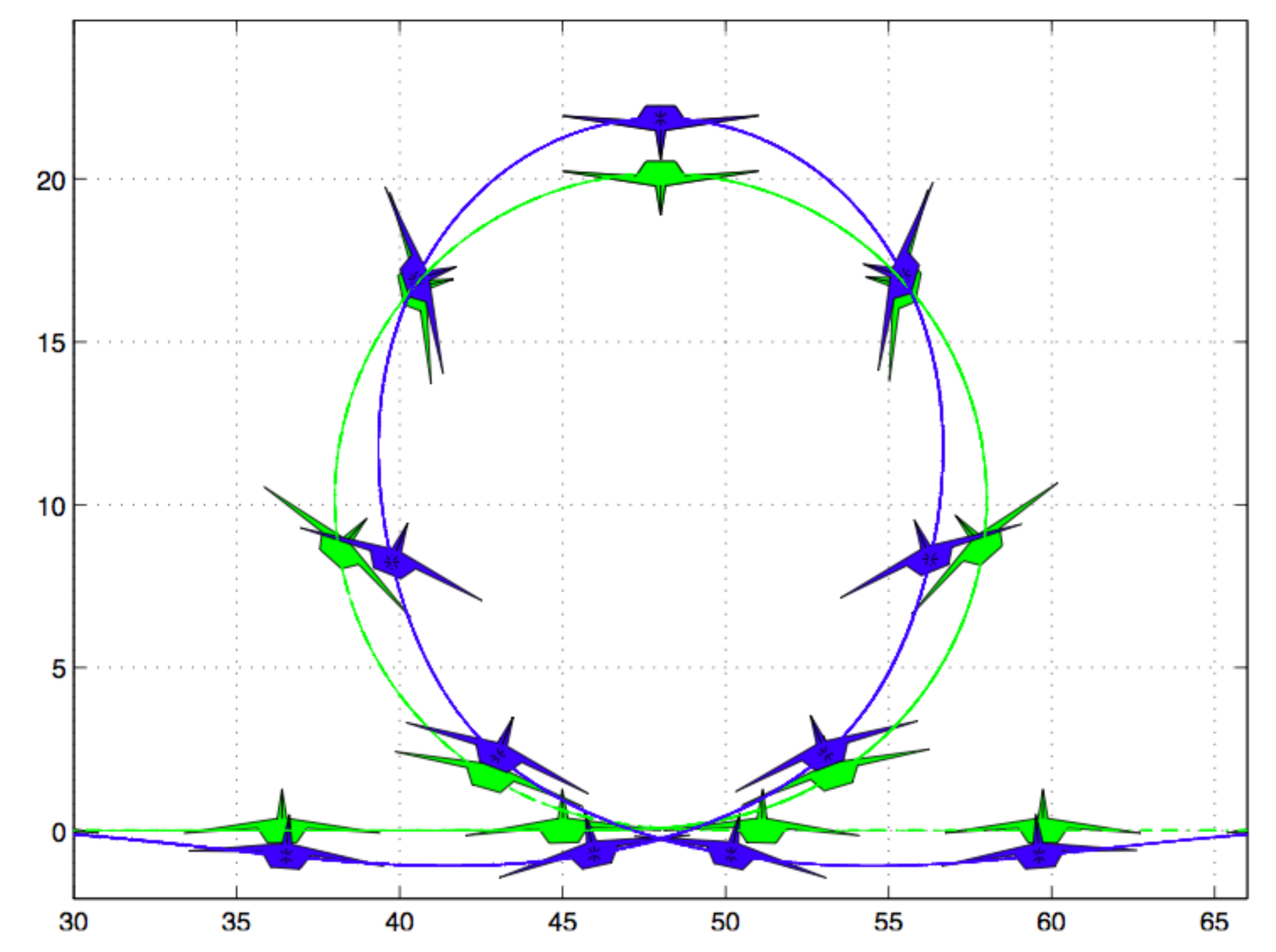}}\;
  \subfloat[$u_1$]{\label{fig:U1_extend}
    \includegraphics[width = 0.299999\textwidth]{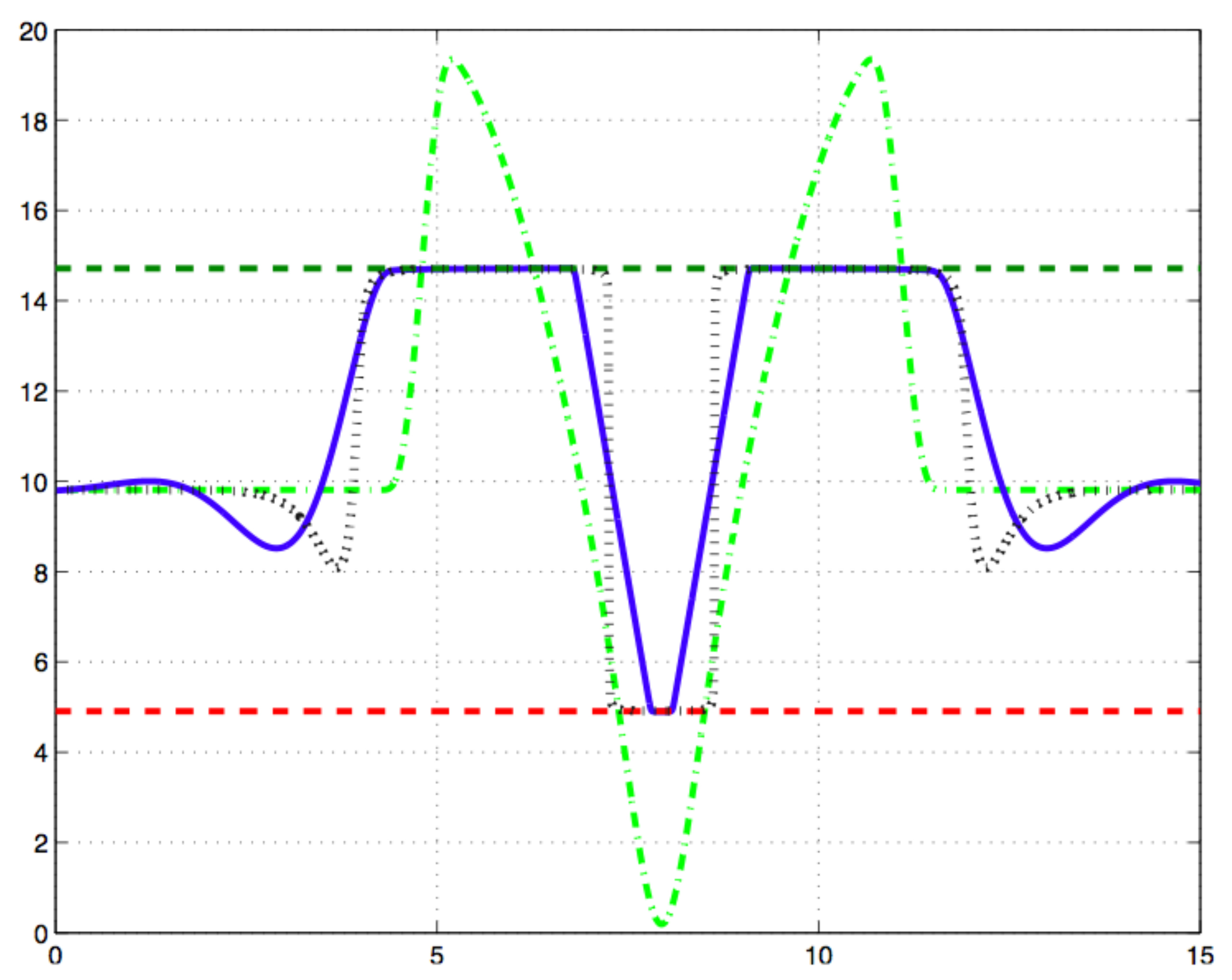}}\;
  \subfloat[$u_2$]{\label{fig:U2_extend}
    \includegraphics[width = 0.299999\textwidth]{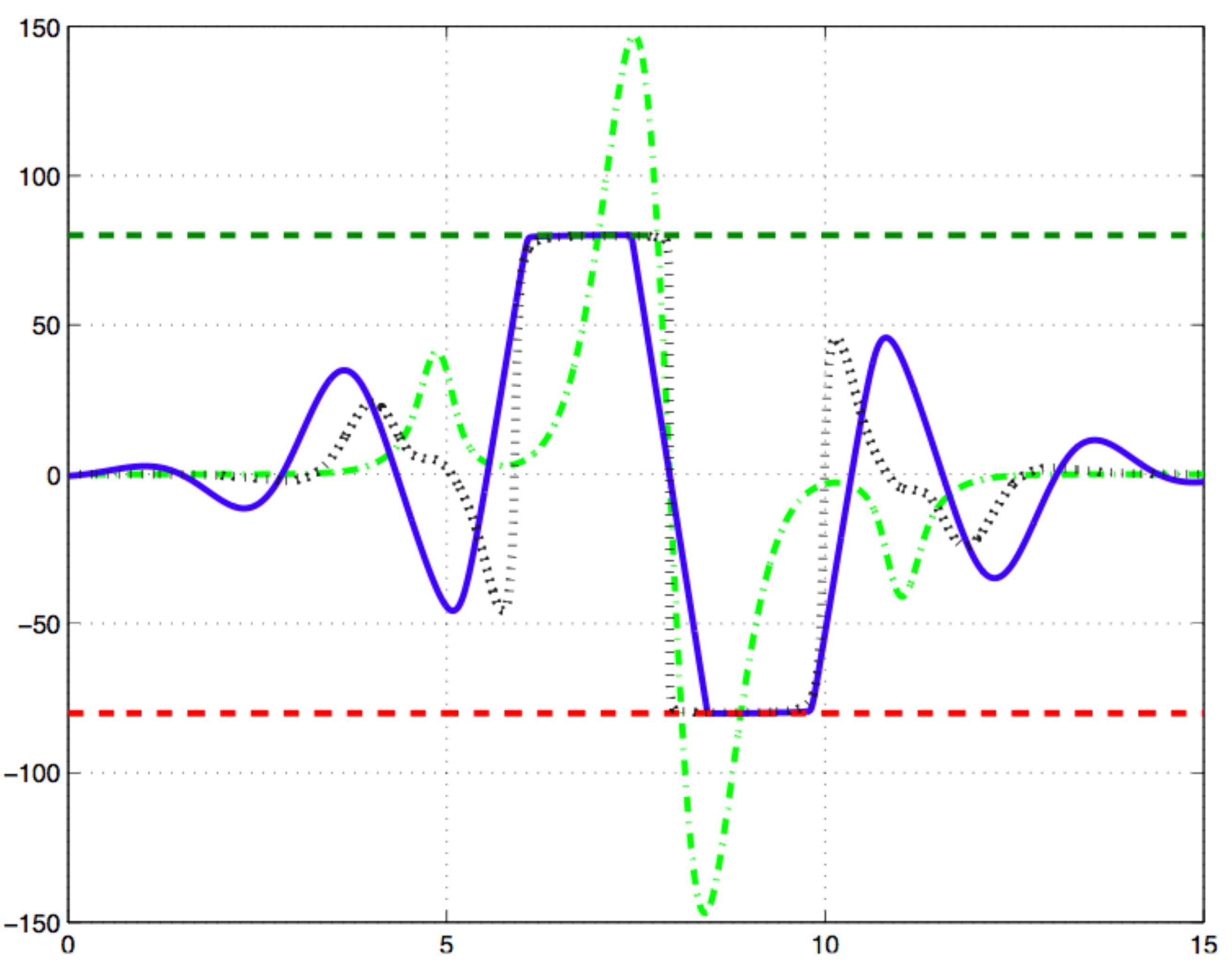}}
  \caption{Desired vs feasible path and inputs for the coupled PVTOL
    ($\EpsVTOL=1$) with and without bounds on the input derivatives. Specifically: the
    dashed green lines are the desired curves, the solid blue lines are the
    optimal trajectories for the system with dynamic extension (bounds on
    both inputs and input derivatives) and the dotted grey lines are the optimal
    trajectories for the system without dynamic extension (bounds on the
    inputs only).}
   \label{fig:trajs_extend}
\end{figure}


\section{Conclusions}
In this paper we have studied a constrained trajectory lifting problem for
nonlinear control systems. Given a desired output curve for a nonlinear
\emph{maneuvering} system, we compute a full (state-input) trajectory of the
system such that: (i) the output portion is close to the desired one, and (ii) a
set of point-wise state-input constraints are satisfied. We have proposed a
nonlinear optimal control strategy based on a novel projection operator based
Newton method for point-wise constrained control systems \cite{JH:02,JH-AS:06}
combined with dynamic embedding, constraints relaxation and continuation with
respect to parameters. Under suitable values of the system and constraints
parameters we have proven that a feasible trajectory exists and can be computed
by means of the proposed strategy. Finally, we have completely characterized the
strategy for the PVTOL aircraft and provided numerical computations showing the
effectiveness of the strategy for an aggressive desired barrel roll maneuver in
presence of relatively tight constraints.


\appendix


\section{The \emph{Projection Operator} approach for the optimization of
  trajectory functionals}
\label{sec:prelim_proj_oper}
In this section we recall the main mathematical tools that we use to develop the
feasible trajectory exploration strategy and to prove its correctness.

\subsection{Trajectory tracking projection operator}
The trajectory tracking projection operator, \cite{JH-DGM:98}, provides a
numerically robust representation of nonlinear system trajectories and is at the
basis of the novel descent methods for nonlinear optimization of trajectory
functionals, \cite{JH:02}, used in the paper.
Let us consider the nonlinear control system $\dot x(t) = f(x(t), u(t))$, $x(0)
= x_0$, where $f(x,u)$ is a $\CC^2$ map in $x\in\real^n$ and $u\in\real^m$.
We recall that a \emph{bounded} curve $\eta = (\bar{x}(\cdot), \bar{u}(\cdot))$
is a (state-input) \emph{trajectory} of the system if $\dot{\bar{x}}(t) =
f(\bar{x}(t), \bar{u}(t))$, $\bar{x}(0) = x_0$, for all $t \in [0, T]$, $0<T\leq
+\infty$.
%
Suppose that $\xi(t) = (\alpha(t), \mu(t))$, $t
\in [0, T]$, is a bounded curve (e.g., an approximate
trajectory of the system) and let $\eta(t) = (x(t), u(t))$, $t \in [0, T]$, be
the trajectory determined by the nonlinear feedback system
\begin{equation*}
  \begin{split}
    \dot x(t) =& f(x(t), u(t)), \qquad x(0) = x_0,\\
    u(t) =& \mu(t) + K(t)(\alpha(t) - x(t))
  \end{split}
\end{equation*}
Under certain conditions on $f$ and $K$, this feedback system defines a
continuous, nonlinear \emph{projection operator}
\begin{equation*}
  \PP : = (\alpha, \mu) \mapsto \eta = (x, u).
\end{equation*}
That is, independent of $K$, if $\xi$ is a trajectory, $\xi\in\TT$, then $\xi$
is a fixed point of $\PP$, $\xi = \PP(\xi)$. If $K$ is bounded
(and, if $\xi$ is a trajectory of infinite extent, such that the above feedback
exponentially stabilizes $\xi_0$), then $\PP$ is well defined on an $L_\infty$
neighborhood of $\xi_0$ and is $\CC^r$ (with respect to the $L_\infty$ norm) on
its domain (including an open neighborhood of $\xi_0$) whenever $f$ is
\cite{JH:02}.
%
%
The first derivative of the projection operator, $\zeta \mapsto
D\PP(\xi)\cdot\zeta$, is the (continuous) linear projection operator given by
the standard linearization
\begin{equation*}
  \begin{split}
    \dot{z}(t) & = A(\eta(t)) z(t) + B(\eta(t)) v(t), \qquad z(0) = 0, \\
    v(t) & = \nu(t) + K(t) [\beta(t) - z(t) ] \; .
  \end{split}
  \label{eq:linz}
\end{equation*}
where $D\PP(\xi)\cdot\zeta = (z(\cdot), v(\cdot))$, with $\zeta = (\beta(\cdot),
\nu(\cdot))$, and $A(\eta(t)) = f_x(x(t), u(t))$ and $B(\eta(t)) = f_u(x(t),
u(t))$.  The tangent space $T_\xi\TT$ at a given trajectory $\xi\in\TT$ is,
thus, the set of curves $\zeta$ satisfying $\zeta = D\PP(\xi) \cdot \zeta$.

The projection operator $\PP$ provides a convenient parametrization of the
trajectories in the neighborhood of a given trajectory \cite{JH-DGM:98}. Indeed,
the tangent space $T_\xi\TT$ can be used to parameterize all nearby
trajectories.

\begin{theorem}[Trajectory manifold representation theorem \cite{JH-DGM:98}]
  \label{thm:traj_man_repr}
  Given $\xi \in \TT$, there is an $\epsilon > 0$ such that, for each $\eta \in
  \TT$ with $\norm{\eta - \xi}{L_\infty} < \epsilon$ there is a unique $\zeta \in T_\xi
  \TT$ such that $\eta = \PP(\xi+\zeta)$. This provides a $\CC^r$ atlas of
  charts, indexed by trajectories $\xi\in\TT$, so that $\TT$ is a $\CC^r$ Banach
  manifold.
\end{theorem}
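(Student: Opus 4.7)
The plan is to prove this via the inverse function theorem on Banach spaces. For each $\xi \in \TT$, I introduce the map $\Phi_\xi : T_\xi\TT \to L_\infty$ defined by $\Phi_\xi(\zeta) = \PP(\xi + \zeta)$ on a sufficiently small neighborhood of $0$. Since $D\PP(\xi)$ is a continuous linear operator on $L_\infty$, the tangent space $T_\xi\TT = \{\zeta \in \Xinf : \zeta = D\PP(\xi) \cdot \zeta\}$ is a closed subspace of $L_\infty$ and hence a Banach space in its own right. Because $\PP$ is $\CC^r$ on an $L_\infty$ neighborhood of $\xi$, so is $\Phi_\xi$ near $0$.

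The crucial step is to identify $D\Phi_\xi(0)$ as the identity on $T_\xi\TT$. I would exploit the fact that every trajectory is a fixed point of $\PP$, which on a neighborhood of $\xi$ inside the domain of $\PP$ yields the idempotence $\PP \circ \PP = \PP$. Differentiating at $\xi$ gives $D\PP(\xi) \circ D\PP(\xi) = D\PP(\xi)$, so that $D\PP(\xi)$ is a continuous linear projection whose range is, by definition, $T_\xi\TT$. Restricted to $T_\xi\TT$ this projection acts as the identity, so $D\Phi_\xi(0) = \mathrm{id}_{T_\xi\TT}$, a toplinear isomorphism.

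The Banach-space inverse function theorem then supplies open sets $U \subset T_\xi\TT$ containing $0$ and $V \subset L_\infty$ containing $\xi$ such that $\Phi_\xi : U \to V \cap \PP(\mathrm{dom}\,\PP)$ is a $\CC^r$ diffeomorphism. Choosing $\epsilon > 0$ so that $\|\eta - \xi\|_{L_\infty} < \epsilon$ forces $\eta \in V$, and observing that any such $\eta \in \TT$ satisfies $\eta = \PP(\eta) \in \PP(\mathrm{dom}\,\PP)$, we obtain for each such $\eta$ a unique $\zeta \in U$ with $\eta = \Phi_\xi(\zeta) = \PP(\xi + \zeta)$. The local inverse $\Phi_\xi^{-1}$ serves as a chart about $\xi$; for two overlapping charts at $\xi_1, \xi_2 \in \TT$, the transition map $\Phi_{\xi_2}^{-1} \circ \Phi_{\xi_1}$ is $\CC^r$ as a composition of $\CC^r$ maps, endowing $\TT$ with the structure of a $\CC^r$ Banach manifold.

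The main obstacle I anticipate is ensuring that the image of $\Phi_\xi$ genuinely covers a full $L_\infty$-neighborhood of $\xi$ in $\TT$, rather than only a slice inside $\PP(\mathrm{dom}\,\PP)$. This is resolved by the splitting $L_\infty = \ker D\PP(\xi) \oplus T_\xi\TT$ induced by the idempotency of $D\PP(\xi)$: nearby trajectories differ from $\xi$ by an $L_\infty$-small perturbation whose $\ker D\PP(\xi)$-component is absorbed by $\PP$, leaving the chart coordinate $\zeta \in T_\xi\TT$ uniquely determined. A secondary care point is that all differentiability is taken with respect to the $L_\infty$ topology in which $\PP$ is known to be $\CC^r$ \cite{JH:02}, so the IFT must be applied in that topology throughout.
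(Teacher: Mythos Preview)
The paper does not actually prove this theorem: it is quoted in the appendix with a citation to \cite{JH-DGM:98} and no proof is supplied. There is therefore no proof in the paper to compare your proposal against.

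On the substance of your argument, the strategy is the right one and is essentially how the result is proved in the cited reference: exploit that $\PP$ is $\CC^r$ near $\xi$, that $D\PP(\xi)$ is an idempotent bounded linear operator, and hence that $T_\xi\TT$ is a closed complemented subspace, then invoke an inverse/implicit function theorem. There is, however, a technical gap in the way you invoke the IFT. As you have set it up, $\Phi_\xi: T_\xi\TT \to L_\infty$ has $D\Phi_\xi(0)$ equal to the inclusion $T_\xi\TT \hookrightarrow L_\infty$, which is injective but not surjective, so it is \emph{not} a toplinear isomorphism and the Banach IFT does not apply directly to give a diffeomorphism onto an open set. What you actually need is either (a) to view the target as $T_\xi\TT$ itself, e.g.\ by considering $\zeta \mapsto D\PP(\xi)\cdot(\PP(\xi+\zeta)-\xi)$, whose derivative at $0$ \emph{is} the identity on $T_\xi\TT$, together with a separate argument that this recovers every nearby $\eta\in\TT$; or (b) to use the splitting $L_\infty = T_\xi\TT \oplus \ker D\PP(\xi)$ from the outset and apply the submersion/regular value form of the IFT to $\eta \mapsto \PP(\eta)-\eta$, whose zero set is $\TT$. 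You do mention the splitting, but only as a patch for a ``secondary obstacle''; in fact it is the mechanism that makes the IFT applicable, and the surjectivity onto a neighborhood of $\xi$ in $\TT$ does not follow without it.
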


\subsection{Projection operator based Newton method}%
\label{subsec:proj-operator}
Consider the unconstrained optimal control problem
\begin{equation}
  \begin{split}
    \minimize  & \; \int_0^T l(\tau, x(\tau), u(\tau)) d\tau + m(x(T))\\[1.2ex]
    \subj & \; \dot{x}(t) = f(x(t), u(t)), \qquad x(0)=x_0.
  \end{split}
  \label{eq:unconstr_opt_contr}
\end{equation}
where $l(t, x, u)$ is $\CC^2$ in $x$ and $u$, convex in $u$, and $\CC^1$ in $t$,
and $m(x)$ is $\CC^2$ in $x$.
%
This problem is equivalent to the constrained optimization problems
\begin{equation*}
  \min_{\xi \in \TT} h(\xi) = \min_{ \xi=\PP(\xi)} h(\xi).
\end{equation*}
where $h(\xi) := \int_0^T l(\tau, x(\tau), u(\tau)) d\tau + m(x(T))$ and the
constraint set, the trajectory space $\TT$, is a Banach submanifold of $\Xtilde
= (x_0, 0) + \Xinf$.
%
%
Next lemma is the basis for the Projection Operator Newton descent method that
we use in our strategy. This result is also useful to prove the existence of a
feasible lifted trajectory.

\begin{lemma}[Unconstrained minimization through projection \cite{JH:02}]
  \label{lem:min_via_projection}
  Let
  $g(\xi) := h(\PP(\xi))$,
  for $\xi \in \UU \subset \Xtilde$ with $\PP(\UU) \subset \UU \subset \dom{}
  \PP$. Then, the optimization problems
  \begin{equation*}
    \min_{\xi \in \TT} h(\xi) \quad \text{and} \quad \min_{\xi \in \UU} g(\xi)
    \label{eq:optim_contr_equiv_PO}
  \end{equation*}
  are equivalent in the following sense. If $\xi^* \in \TT \cap \UU$ is a
  constrained local minimum of $h$, then it is an unconstrained local minimum of
  $g$. If $\xi^+ \in \UU$ is an unconstrained local minimum of $g$ in $\UU$,
  then $\xi^* = \PP(\xi^+)$ is a constrained local minimum of $h$ on $\TT$.
\end{lemma}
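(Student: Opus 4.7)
The plan is to prove the two implications of the equivalence separately, using three structural facts about the projection operator $\PP$: continuity of $\PP$ on its domain, the idempotence relation $\PP\circ\PP=\PP$ (so that $\PP$ restricts to the identity on $\TT$), and the standing hypothesis $\PP(\UU)\subset\UU$.

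For the first implication, I assume $\xi^*\in\TT\cap\UU$ is a constrained local minimum of $h$ on $\TT$. Since $\xi^*\in\TT$, it is a fixed point of $\PP$, so $g(\xi^*)=h(\PP(\xi^*))=h(\xi^*)$. For any $\xi\in\UU$ in a sufficiently small $L_\infty$-neighborhood of $\xi^*$, the continuity of $\PP$ forces $\PP(\xi)\in\TT$ to lie in an arbitrarily small $L_\infty$-neighborhood of $\xi^*$, so the constrained local minimality of $\xi^*$ on $\TT$ yields $h(\PP(\xi))\geq h(\xi^*)$, i.e., $g(\xi)\geq g(\xi^*)$. This direction is essentially a continuity argument, with the only mild technicality being that $\xi^*\in\UU$ together with openness of $\UU$ guarantees the neighborhood stays inside $\UU$.

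For the reverse implication, I assume $\xi^+\in\UU$ is an unconstrained local minimum of $g$ in $\UU$, so that $g(\xi)\geq g(\xi^+)$ for all $\xi\in B_\epsilon(\xi^+)\cap\UU$. Set $\xi^*:=\PP(\xi^+)$; then $\xi^*\in\TT$ by construction and $\xi^*\in\UU$ by $\PP(\UU)\subset\UU$. Applying $h$ to $\PP\circ\PP=\PP$ evaluated at $\xi^+$ gives $g(\xi^+)=h(\PP(\xi^+))=h(\xi^*)$, and more generally the identity $g(\xi)=g(\PP(\xi))$ holds on $\UU$. Substituting this into the local-minimum bound rewrites it as $h(\xi^*)\leq h(\PP(\xi))$ for $\xi\in B_\epsilon(\xi^+)\cap\UU$. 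The proof reduces to showing that the image set $\PP(B_\epsilon(\xi^+)\cap\UU)$ contains an open neighborhood of $\xi^*$ in $\TT$: once this is in hand, every $\eta\in\TT$ close enough to $\xi^*$ is of the form $\PP(\xi)$ for some $\xi\in B_\epsilon(\xi^+)\cap\UU$, and $h(\eta)\geq h(\xi^*)$ follows immediately, establishing that $\xi^*$ is a local minimum of $h$ on $\TT$.

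The hard part, and the step requiring most care, is this image-covering claim. My plan is to exploit a submersion property of $\PP$ at $\xi^+$: differentiating $\PP\circ\PP=\PP$ and using that $\PP$ maps into $\TT$ shows that the range of $D\PP(\xi^+)$ is exactly $T_{\xi^*}\TT$, so $\PP$ is a smooth submersion onto $\TT$ at $\xi^+$. The required covering is then established in the spirit of Theorem~\ref{thm:traj_man_repr}: given $\eta\in\TT$ sufficiently close to $\xi^*$, the equation $\PP(\xi^+ + \zeta) = \eta$ for the unknown $\zeta\in L_\infty$ is solved for small $\zeta$ by a contraction mapping on a complement of the kernel of $D\PP(\xi^+)$, using that its restriction to this complement is a bounded bijection onto $T_{\xi^*}\TT$. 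Everything else in the proof is then bookkeeping built around the idempotence and continuity of $\PP$.
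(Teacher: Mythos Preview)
The paper does not supply its own proof of this lemma: it is quoted from \cite{JH:02} in the appendix and immediately followed by the description of the Newton method, with no argument given. So there is nothing in the paper to compare your proposal against.

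That said, your argument is sound in outline. The first implication is exactly the continuity argument you describe. For the second implication, the covering step is indeed the substantive part, and your submersion strategy is the right idea. One simplification you may have missed: from the explicit formula for $D\PP(\xi)\cdot\zeta$ given in the appendix (the linearized feedback system with coefficients $A(\eta(t))$, $B(\eta(t))$ depending only on $\eta=\PP(\xi)$), the derivative $D\PP(\xi)$ depends on $\xi$ only through $\PP(\xi)$. Hence $D\PP(\xi^+)=D\PP(\xi^*)$, and the latter is already known to be a continuous linear \emph{projection} onto $T_{\xi^*}\TT$. This gives you the complemented kernel and the bounded right inverse for free, so the contraction-mapping / local-surjectivity step reduces essentially to the situation of Theorem~\ref{thm:traj_man_repr} shifted to base point $\xi^+$ rather than $\xi^*$. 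With that observation in hand, your ``hard part'' becomes routine. A minor point: you invoke openness of $\UU$, which the lemma does not state explicitly; in practice $\UU$ is taken to be an open subset of $\text{dom}\,\PP$, but you should flag this as a standing assumption.
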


The projection operator based Newton method, \cite{JH:02}, is the following.
\begin{center}
  \begin{minipage}{0.5\linewidth}
    \medskip \hrule width \linewidth \smallskip
    {\bf Algorithm} (projection operator Newton method)\\
    Given initial trajectory $\xi_0 \in \TT$\\
    {\bf For} $i = 0, 1, 2 ...$ %
    \begin{itemize}
    \item[] design $K$ defining $\PP$ about $\xi_i$
    \item[] search direction:
      $$\zeta_i = \text{arg} \min_{\zeta\in T_{\xi_i} \TT} Dg(\xi_i) \cdot \zeta + \frac{1}{2} D^2 g(\xi_i)(\zeta,
      \zeta)$$
    \item[] step size: $\gamma_i = \arg \min_{\gamma \in (0,1]} g(\xi +
      \gamma \zeta_i)$;
    \item[] project: $\xi_{i+1}={\PP}(\xi_i + \gamma_i \zeta_i)$.
    \end{itemize}
    {\bf end}
    \medskip \hrule width \linewidth \smallskip
  \end{minipage}
\end{center}
It is worth noting that the two main steps of designing the $K$ and searching
for the descent direction involve the solution of suitable (well known) LQ
optimal control problems.

\subsection{Barrier functional approach for constrained optimal control}
\label{sec:barrier_functional}
Next, we present an interior point method, introduced in \cite{JH-AS:06}, for
the optimization of trajectory functionals in presence of point-wise state and
input constraints. The objective is to solve over the class of bounded inputs
the optimization problem \eqref{eq:unconstr_opt_contr} subject to the point-wise inequality
constraints
\[
   c_j(t, x(t), u(t))\leq 0, \quad  j\in \{1,\ldots,k\}, \quad \text{for almost all $t\in[0,T]$},
\]
where $c_j(t,x,u)$ is $\CC^2$ in $(x,u)$ and $\CC^1$ in $t$.
%
The main idea proposed in \cite{JH-AS:06} is to approximate the solution of the
constrained problem by solving an unconstrained optimal control problem through
a suitable translation of the well known barrier function method used in finite
dimension convex optimization \cite{SB-LV:04}. The direct translation to
infinite dimension would be
\begin{equation}
  \begin{split}
    \minimize  &~ \int_0^T l(\tau, x(\tau), u(\tau)) -\EpsBeta \sum_j \log (-c_j(t, x(\tau), u(\tau)))\, d\tau + m(x(T))\\
    \subj      &~ \dot{x}(t) = f(x(t), u(t)), \qquad x(0)=x_0\\
  \end{split}
  \label{eq:opt_barrier_log}
\end{equation}
A key difficulty of the problem in \eqref{eq:opt_barrier_log} is that the cost
functional can not be evaluated at infeasible curves. The problem is resolved by
introducing the approximate barrier function $\beta_{\DelBeta}(\cdot)$,
$0<\DelBeta\leq 1$, defined as
\begin{equation*}
  \begin{split}
    \beta_{\DelBeta}(z) =
    \begin{cases}
      -\log z & z > \DelBeta\\[1.2ex]
      \frac{k-1}{k} \left[ \left(\frac{z - k \DelBeta}{(k-1) \DelBeta} \right)^k
        - 1 \right] - \log \DelBeta & z \leq \DelBeta.
    \end{cases}
  \end{split}
  \label{eq:beta-def}
\end{equation*}
The associated barrier functional is
\begin{equation}
  b_{\DelBeta}(\xi) = \int_0^T \sum_j
  \beta_{\DelBeta}(-c_j(\tau, \alpha(\tau), \mu(\tau))),
  \label{eq:barrier-functional}
\end{equation}
which is well defined for any curve $\xi\in \Xtilde$, so that we get the optimal
control problem relaxation
\begin{equation*}
  \begin{split}
    \min_{\xi\in\TT} h(\xi) + \EpsBeta b_{\DelBeta}(\xi).
  \end{split}
\end{equation*}

\begin{remark}[Projection operator Newton method to solve the relaxed problem]
  The projection operator Newton method can be used to optimize the functional
  $g_{\EpsBeta, \DelBeta}(\xi) = h(\PP(\xi)) + \EpsBeta b_{\DelBeta}(\PP(\xi))$
  as part of a continuation method on the parameters $\EpsBeta$ and
  $\DelBeta$. The technique is to start with a large $\EpsBeta$ and $\DelBeta$,
  solve the problem $\min g_{\EpsBeta, \DelBeta}(\xi)$ using the Newton method
  starting from the current trajectory and then reduce $\EpsBeta$ and
  $\DelBeta$.
  It is worth noting that for a fixed $\EpsBeta$ it is possible to iterate on
  $\DelBeta$ up to a value for which the solution is the same as the pure
  barrier functional. For this reason in the paper we neglect the
  dependence of $g_{\EpsBeta, \DelBeta}(\xi)$ on $\DelBeta$ (thus writing
  $g_{\EpsBeta}(\xi)$).
\end{remark}


\end{document}